  \newcommand{\HH}{\mathbb{H}}
  \newcommand{\RR}{\mathbb{R}}
  \newcommand{\ZZ}{\mathbb{Z}}
  \newcommand{\vF}{\mathcal{F}}
  \newcommand{\vH}{\mathcal{H}}
  \newcommand{\vI}{\mathcal{I}}
  \newcommand{\vM}{\mathcal{M}}
  \newcommand{\vN}{\mathcal{N}}
  \newcommand{\vP}{\mathcal{P}}
  \newcommand{\vQ}{\mathcal{Q}}
  \newcommand{\vS}{\mathcal{S}}
  \newcommand{\vT}{\mathcal{T}}
  \newcommand{\I}{\operatorname{i}}
  \newcommand{\Aut}{\operatorname{Aut}}
  \newcommand{\Comm}{\operatorname{Comm}}
  \newcommand{\Homeo}{\operatorname{Homeo}}
  \newcommand{\Mod}{\operatorname{MCG}}
  \newcommand{\PMod}{\operatorname{PMCG}}
  \newcommand{\supp}{\operatorname{supp}}
  \newcommand{\LK}{\operatorname{Link}}
  \newcommand{\Ends}{\operatorname{Ends}}
  \newcommand{\sm}{\setminus}
  \newcommand{\ol}[1]{\overline{#1}}
  \newcommand{\abs}[1]{\left\lvert #1 \right\rvert}
  \definecolor{lightgrey}{gray}{.85}
  \theoremstyle{definition}
  \newtheorem*{defn}{Definition}
  \newtheorem*{rmk}{Remark}
  \newtheorem*{ex}{Example}
  \newtheorem*{exs}{Examples}
  \theoremstyle{plain}
  \newtheorem{thm}{Theorem}
  \newtheorem{lem}[thm]{Lemma}
  \newtheorem{cor}[thm]{Corollary}
  \newtheorem{prop}[thm]{Proposition}
  \newtheorem{thmmain}{Theorem}
  \theoremstyle{definition}
\begin{document}

%section{Title and abstract}

  \title[Big Torelli groups]{Big Torelli groups: generation and commensuration}
  \author[Aramayona]{J. Aramayona}
  \address{ Universidad Aut\'onoma de Madrid \& ICMAT, Madrid, Spain}
  \email{aramayona@gmail.com}
  \author[Ghaswala]{T. Ghaswala}
  \address{Department of Mathematics, University of Manitoba, Winnipeg, Canada}
  \email{ty.ghaswala@gmail.com}
  \author[Kent]{A. E. Kent}
  \address{Department of Mathematics, University of Wisconsin, Madison, WI, United States}
  \email{kent@math.wisc.edu}
  \author[McLeay]{A. McLeay}
  \address{Mathematics Research Unit, University of Luxembourg, Esch-sur-Alzette, Luxembourg}
  \email{mcleay.math@gmail.com}
  \author[Tao]{J. Tao}
  \address{Department of Mathematics, University of Oklahoma, Norman, OK, United States}
  \email{jing@ou.edu}
  \author[Winarski]{R. R. Winarski} 
  \address{Department of Mathematics, University of Michigan, Ann Arbor, MI, United States}
  \email{rebecca.winarski@gmail.com}
  \maketitle
  %\newpage
  %\tableofcontents
  %\newpage

  %%%%%%%%%%%%%%%%%%%%%%%%%%%%%%%%%%%
  \begin{abstract}
    For any surface $\Sigma$ of infinite topological type, we study the
    {\em Torelli subgroup} $\vI(\Sigma)$ of the mapping class group
    $\Mod(\Sigma)$, whose elements are those mapping classes that act
    trivially on the homology of $\Sigma$. Our first result asserts that
    $\vI(\Sigma)$ is topologically generated by the subgroup of
    $\Mod(\Sigma)$ consisting of those elements in the Torelli group which
    have compact support. In particular, using results of Birman \cite{Bi},
    Powell \cite{Po}, and Putman \cite{putman2007} we deduce that
    $\vI(\Sigma)$ is topologically generated by {\em separating twists} and
    {\em bounding pair maps}. Next, we prove the abstract commensurator
    group of $\vI(\Sigma)$ coincides with $\Mod(\Sigma)$. This extends the
    results for finite-type surfaces \cite{FI, BM, BMadden, Kida} to the
    setting of infinite-type surfaces. 
    %This extends results of Farb-Ivanov \cite{FI}, Brendle--Margalit
    %\cite{BM,BMadden}, and Kida \cite{Kida} to the setting of
    %infinite--type surfaces.

  \end{abstract}

  %%%%%%%%%%%%%%%%%%%%%%%%%%%%%%%%%%%
  
\section{Introduction}

  Let $\Sigma$ be a connected oriented surface of infinite topological type
  -- that is, a surface with fundamental group that is not finitely
  generated. We will also assume the boundary components of $\Sigma$ are
  compact. The \emph{mapping class group} of $\Sigma$ is the group:
  \[\Mod(\Sigma)=\Homeo(\Sigma, \partial \Sigma)/\Homeo_0(\Sigma, \partial
  \Sigma),\] where $\Homeo(\Sigma, \partial \Sigma)$ is the group of
  self-homeomorphisms of $\Sigma$ which fix $\partial \Sigma$ pointwise,
  equipped with the compact-open topology, and  $\Homeo_0(\Sigma, \partial
  \Sigma)$ is the connected component of the identity in
  $\Homeo(\Sigma,\partial \Sigma)$. We equip $\Mod(\Sigma)$ with the
  quotient topology.  

  There is a natural homomorphism $\Mod(\Sigma) \to \Aut(H_1(\Sigma,
  \mathbb{Z}))$, whose kernel is commonly referred to as the \emph{Torelli
  group} $\vI(\Sigma)<\Mod(\Sigma)$. While Torelli groups of finite--type
  surfaces have been the object of intense study (see for example
  \cite{BBM,Bi,HM,Johnson,Kida13,MM,mess,Po,Putman12}) not much is known
  about them in the case of surfaces of infinite type. The present article
  aims to be a first step in this direction. 

  \subsection*{Generation} 

  In a recent article, Patel--Vlamis \cite{PV} give a topological
  generating set for the \emph{pure} mapping class group $\PMod(\Sigma)$
  (i.e.\ the subgroup of $\Mod(\Sigma)$ consisting of those mapping classes
  which fix every \emph{end} of $\Sigma$; see Section \ref{sec.defs}). More
  concretely, they show that $\PMod(\Sigma)$ is topologically generated by
  the subgroup of elements with compact support if $\Sigma$ has at most one
  end \emph{accumulated by genus}; otherwise, $\PMod(\Sigma)$ is
  topologically generated by the union of the set of compactly--supported
  elements and the set of \emph{handle shifts}; see Section \ref{sec.defs}. 

  Observe that $\vI(\Sigma) < \PMod(\Sigma)$. Denote by $\vI_c(\Sigma)$ the
  subgroup of $\vI(\Sigma)$ consisting of those elements with compact
  support, and let $\ol{\vI_c(\Sigma)}$ be the closure of $\vI_c(\Sigma)$
  in $\PMod(\Sigma)$. Our first result asserts that, for any infinite--type
  surface $\Sigma$, the set of compactly--supported mapping classes
  contained in the Torelli group topologically generates the Torelli group: 

  \begin{thm}
    For any connected oriented surface $\Sigma$ of infinite type, we have
    $\vI(\Sigma) = \ol{\vI_c(\Sigma)}$. 
    \label{thm.generation}
  \end{thm}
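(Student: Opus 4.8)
The plan is to prove the two inclusions $\ol{\vI_c(\Sigma)} \subseteq \vI(\Sigma)$ and $\vI(\Sigma) \subseteq \ol{\vI_c(\Sigma)}$ separately, with all the work in the second.
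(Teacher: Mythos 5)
Your proposal contains only the outline --- ``prove both inclusions, with all the work in the second'' --- and none of the actual work, so as it stands there is no proof to assess. The easy inclusion $\ol{\vI_c(\Sigma)} \subseteq \vI(\Sigma)$ does at least admit a one-line argument (if $\phi_n \to \phi$ with $\phi_n \in \vI_c(\Sigma)$, then for each curve $\alpha$ eventually $\phi_n(\alpha)=\phi(\alpha)$, so $\phi$ acts trivially on $H_1(\Sigma,\ZZ)$), but you have not supplied even that.

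The genuine content lies entirely in the direction $\vI(\Sigma) \subseteq \ol{\vI_c(\Sigma)}$, and you have not indicated any mechanism for it. Two separate obstacles must be overcome. First, one must show $\vI(\Sigma) < \ol{\PMod_c(\Sigma)}$; this is nontrivial because when $\Sigma$ has at least two ends accumulated by genus, $\PMod(\Sigma)$ is strictly larger than $\ol{\PMod_c(\Sigma)}$ (the quotient is generated by handle shifts, by Patel--Vlamis and \cite{APM}), so one must rule out that a Torelli element has a ``handle-shift part.'' The paper does this by introducing pseudo handle shifts and proving, via a block-matrix argument on $H_1$ of a principal exhaustion, that such classes cannot be limits of compactly supported mapping classes; alternatively one can use \cite[Theorem 4.5]{APM} to produce a separating curve $\gamma$ with $\gamma$ and $f(\gamma)$ of different topological type in every compact subsurface, contradicting triviality of the homological action. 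Second, even knowing $\phi \in \vI(\Sigma)$ is a limit of compactly supported classes $\psi_n \in \PMod_c(\Sigma)$, these $\psi_n$ need not lie in $\vI_c(\Sigma)$; one must correct them. The paper does this by fixing a principal exhaustion $\{P_i\}$, observing that $\psi_{n*}$ is the identity on $H_1(P_i)$ in a suitable block decomposition of $H_1(P_k)$, and then realizing the remaining symplectic block by a homeomorphism $F$ supported in $\overline{P_k - P_i}$ (via Burkhardt's theorem), so that $\phi_n = \psi_n \circ F^{-1}$ lies in $\vI_c(\Sigma)$ and still agrees with $\psi_n$ on $P_i$, hence converges to $\phi$. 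Without some version of both of these steps, the proposed decomposition into two inclusions is merely a restatement of the theorem.
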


  Birman \cite{Bi} and Powell \cite{Po} showed that the Torelli group of a
  closed finite--type surface is generated by {\em separating twists}
  (i.e.\ Dehn twists about separating curves), plus {\em bounding pair
  maps} (that is, products of twists of the form $T_\gamma T_\delta^{-1}$,
  where $\gamma$ and $\delta$ are non-separating but their union
  separates). Putman then proved that the same is true for finite--type
  surfaces with boundary \cite{putman2007}.  In light of this, an immediate
  consequence of Theorem \ref{thm.generation} is: 

  \begin{cor}
    Let $\Sigma$ be a connected oriented surface of infinite topological
    type. Then $\vI(\Sigma)$ is topologically generated by separating
    twists and bounding-pair maps. 
  \end{cor}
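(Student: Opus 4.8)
The plan is to deduce this from Theorem~\ref{thm.generation} together with the finite-type generation theorems of Birman~\cite{Bi}, Powell~\cite{Po} and Putman~\cite{putman2007}. Since separating twists and bounding-pair maps lie in $\vI(\Sigma)$, and since $\vI(\Sigma)=\ol{\vI_c(\Sigma)}$ is closed, it suffices to show that every element of $\vI_c(\Sigma)$ is a product of separating twists and bounding-pair maps of $\Sigma$: taking closures and invoking Theorem~\ref{thm.generation} then yields the corollary.

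The key notion is that of an \emph{adapted} subsurface, by which I mean a compact connected subsurface $S\subseteq\Sigma$ (containing $\partial\Sigma$) each of whose complementary subsurfaces is non-compact and has connected boundary. I would first check that adapted subsurfaces exhaust $\Sigma$: starting from an arbitrary compact subsurface, absorb the compact complementary pieces, and inside each non-compact complementary piece cut off a small clopen collection of ends by a single simple closed curve, using the standard fact that ends of a surface have a neighbourhood basis of subsurfaces bounded by a single simple closed curve. The usefulness of this notion is that for an adapted $S$ the inclusion $H_1(S;\ZZ)\to H_1(\Sigma;\ZZ)$ is injective: a Mayer--Vietoris computation identifies its kernel with the image of those boundary classes $\sum n_i[\mu_i]$ (with $\mu_i$ the components of $\partial S$) that vanish in every complementary subsurface $R_j$; but each $R_j$ meets $S$ along a single curve, and the boundary curve of a connected non-compact surface with connected boundary is never null-homologous in it (a bounding chain would be supported in a compact core, which necessarily has at least two boundary components), so this kernel is trivial. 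Since the inclusion intertwines the two homological actions of $f$, it follows that any $f\in\vI_c(\Sigma)$ supported in an adapted $S$ actually lies in $\vI(S)$.

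Given $f\in\vI_c(\Sigma)$, I would then choose an adapted $S$ whose interior contains the support of $f$, so that $f\in\vI(S)$, and apply the Birman--Powell--Putman theorem to write $f$ as a product of separating twists $T_c$ and bounding-pair maps $T_\gamma T_\delta^{-1}$ of $S$. Adaptedness is used once more to transfer these to $\Sigma$: each curve involved misses $\partial S$, and each complementary subsurface of $S$ is attached along a single boundary curve, so a curve that separates $S$ (respectively, fails to separate $S$) also separates $\Sigma$ (respectively, fails to separate $\Sigma$). Hence each $T_c$ is a separating twist of $\Sigma$, and since $\gamma\cup\delta$ separates $\Sigma$ while neither $\gamma$ nor $\delta$ does, each $T_\gamma T_\delta^{-1}$ is a bounding-pair map of $\Sigma$ (and these maps indeed lie in $\vI(\Sigma)$). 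Thus $\vI_c(\Sigma)$ is contained in the subgroup generated by separating twists and bounding-pair maps of $\Sigma$, which completes the proof.

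I expect the main obstacle to be the joint control of the topology at infinity and of homology: isolating a class of finite-type subsurfaces on which a compactly supported Torelli element restricts to a genuine Torelli element, while simultaneously guaranteeing that the curves produced at the finite level remain separating, respectively non-separating, in $\Sigma$. Once the adapted subsurfaces are available, the remaining homological bookkeeping---injectivity of $H_1(S;\ZZ)\to H_1(\Sigma;\ZZ)$ and the comparison of separating and bounding-pair configurations in $S$ versus $\Sigma$---is routine, as is the final passage to closures via Theorem~\ref{thm.generation}.
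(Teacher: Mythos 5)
Your overall route --- establish $\ol{\vI_c(\Sigma)}\subset\vI(\Sigma)$ by continuity, then use Theorem~\ref{thm.generation} to reduce to writing each compactly supported Torelli element as a product of separating twists and bounding-pair maps via the finite-type theorems --- is the same as the paper's (the paper treats this as immediate), and your adapted subsurfaces, the injectivity of $H_1(S;\ZZ)\to H_1(\Sigma;\ZZ)$, and the transfer of the separating/non-separating property from $S$ to $\Sigma$ are all correct. The gap is in the finite-type input and in your parenthetical claim that the resulting maps lie in $\vI(\Sigma)$. When $S$ has more than one boundary component --- the generic case for an adapted $S$, e.g.\ whenever $\supp(f)$ separates $\Sigma$ --- the group you need generated, $\ker\bigl(\Mod(S,\partial S)\to\Aut H_1(S;\ZZ)\bigr)$, is not the group treated by Birman--Powell (closed surfaces) nor literally by Putman \cite{putman2007}, whose theorem concerns the Torelli groups $\vI(S,P)$ attached to a partition $P$ of the boundary components. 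For the partition induced by your embedding (each complementary piece meets a single boundary curve) the partitioned group strictly contains your kernel, and Putman's generating set for it includes bounding-pair maps $T_\gamma T_\delta^{-1}$ in which $\gamma\cup\delta$ separates $S$ with boundary components of $S$ on both sides. Such a pair is not homologous in $S$, so the map does not even act trivially on $H_1(S;\ZZ)$; worse, in $\Sigma$ both sides of $\gamma\cup\delta$ are then non-compact, so $[\gamma]\neq[\delta]$ in $H_1(\Sigma;\ZZ)$ (pair $[\gamma]-[\delta]$ against a properly embedded line crossing $\gamma$ once and missing $\delta$) and $T_\gamma T_\delta^{-1}$ is \emph{not} in $\vI(\Sigma)$. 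Thus ``union separates $\Sigma$'' does not guarantee membership in the Torelli group; only pairs cobounding a compact subsurface do, and your argument must produce only such pairs.

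Concretely, what you need --- and what requires proof beyond the citation --- is that for a compact $S$ with several boundary components, $\ker\bigl(\Mod(S,\partial S)\to\Aut H_1(S;\ZZ)\bigr)$ is generated by twists about separating curves of $S$ (boundary-parallel ones included) together with bounding-pair maps of pairs that are \emph{homologous in $S$}, i.e.\ cobound a subsurface of $S$ meeting no component of $\partial S$; those generators do transfer, under your adaptedness hypothesis, to separating twists and genuine Torelli bounding-pair maps of $\Sigma$. This statement is plausibly extractable from Putman's partitioned theorem (say for the single-block partition, whose Torelli group lies in your kernel, corrected by boundary twists to control the action on classes crossing $\partial S$), but it is exactly the point where the multi-boundary bookkeeping matters, and it is not the literal ``Birman--Powell--Putman theorem'' you invoke. (The paper's own one-sentence derivation, and indeed its phrasing of which bounding pairs are meant, glosses over the same point, so your proposal inherits the looseness; but as written, your step ``these maps indeed lie in $\vI(\Sigma)$'' is false for some of the factors your cited theorem would produce.)
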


  Theorem \ref{thm.generation} implies $\vI(\Sigma)$ is a closed subgroup
  of $\Mod(\Sigma)$.  Since $\Mod(\Sigma)$ is a Polish group \cite{APM} and
  closed subgroups of Polish groups are Polish, we have the following
  corollary.

  \begin{cor}
    Let $\Sigma$ be a connected oriented surface of infinite topological
    type. Then $\vI(\Sigma)$ is a Polish group.
  \end{cor}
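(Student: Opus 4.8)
The plan is to deduce this immediately from Theorem~\ref{thm.generation} together with two standard facts from descriptive set theory, so the real content has already been established. The first step is to record that $\vI(\Sigma)$ is a \emph{closed} subgroup of $\Mod(\Sigma)$. By Theorem~\ref{thm.generation}, $\vI(\Sigma)=\ol{\vI_c(\Sigma)}$, where the closure is taken inside $\PMod(\Sigma)$; in particular $\vI(\Sigma)$ is closed in $\PMod(\Sigma)$, being the closure of a subset. On the other hand, $\PMod(\Sigma)$ is closed in $\Mod(\Sigma)$: the action of $\Mod(\Sigma)$ on the compact Hausdorff space of ends $\Ends(\Sigma)$ induces a continuous homomorphism $\Mod(\Sigma)\to\Homeo(\Ends(\Sigma))$, whose kernel is exactly $\PMod(\Sigma)$, and this kernel is the preimage of the identity, a closed point of the Hausdorff group $\Homeo(\Ends(\Sigma))$. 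Since a closed subset of a closed subset is closed, $\vI(\Sigma)$ is closed in $\Mod(\Sigma)$.

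The second step is to invoke the result of \cite{APM} that $\Mod(\Sigma)$ is a Polish group, i.e.\ that its topology is separable and completely metrizable. Give $\vI(\Sigma)$ the subspace topology; multiplication and inversion then restrict to continuous maps, so $\vI(\Sigma)$ is a topological group, and it only remains to see that its topology is Polish. This follows from two classical hereditary properties: a subspace of a separable metrizable space is separable metrizable, and a closed subspace of a completely metrizable space is completely metrizable (a closed subset of a complete metric space is complete for the restricted metric). Applying both to the closed subset $\vI(\Sigma)\subseteq\Mod(\Sigma)$ shows that $\vI(\Sigma)$, with the subspace topology, is a Polish space, hence a Polish group.

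There is essentially no obstacle here, but it is worth emphasising where the work lies: closedness of $\vI(\Sigma)$ is not automatic. A priori, $\vI(\Sigma)$ is only the kernel of the homology representation $\Mod(\Sigma)\to\Aut(H_1(\Sigma;\ZZ))$, and when $H_1(\Sigma;\ZZ)$ is infinitely generated the target carries no evident Hausdorff group topology for which this map is continuous, so the kernel argument used above for $\PMod(\Sigma)$ is unavailable. Theorem~\ref{thm.generation} is precisely what is needed: it realizes $\vI(\Sigma)$ as a genuine topological closure, from which closedness, and then the Polish property, follow formally.
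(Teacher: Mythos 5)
Your proof is correct and follows essentially the same route as the paper: the authors likewise deduce the corollary from Theorem~\ref{thm.generation} giving closedness of $\vI(\Sigma)$ in $\Mod(\Sigma)$, together with the facts that $\Mod(\Sigma)$ is Polish \cite{APM} and that closed subgroups of Polish groups are Polish. Your extra verification that $\PMod(\Sigma)$ is closed in $\Mod(\Sigma)$ (and the hereditary properties of Polish topologies) simply spells out details the paper leaves implicit.
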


  \subsection*{Commensurations} 
  Recall that, given a group $G$, its abstract commensurator $\Comm(G)$ is
  the group of equivalence classes of isomorphisms between finite-index
  subgroups of $G$; here, two isomorphisms are equivalent if they agree on
  a finite-index subgroup. Observe that there is a natural homomorphism 
  \[
  \Aut(G) \to \Comm(G).
  \] 
  We will prove: 

  \begin{thm}\label{thm.commensurator}
    For any connected oriented surface $\Sigma$ of infinite topological type and without boundary
    we have 
    \[
    \Comm \vI(\Sigma) \cong \Aut \vI(\Sigma) \cong \Mod(\Sigma).
    \] 
  \end{thm}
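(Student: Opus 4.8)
The plan is to follow the classical template for computing $\Comm\vI$ of a finite-type surface (Farb–Ivanov, Brendle–Margalit, Bridson–Margalit–Adden, Kida), adapted to the infinite-type setting, and to leverage Theorem~\ref{thm.generation} wherever finite generation would have been used. The overall strategy is: (i) show that every abstract isomorphism $\phi\colon H_1 \to H_2$ between finite-index subgroups of $\vI(\Sigma)$ permutes, up to taking powers, a distinguished conjugacy-stable family of subgroups — namely centralizers of separating twists — and hence induces an automorphism of a combinatorial object; (ii) identify that object with a well-known rigid complex acted on by $\Mod(\Sigma)$; (iii) invoke a rigidity theorem for that complex to produce a mapping class realizing $\phi$; and (iv) check that this assignment gives the asserted isomorphisms $\Comm\vI(\Sigma) \cong \Aut\vI(\Sigma) \cong \Mod(\Sigma)$.

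\medskip

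For step (i), the key algebraic inputs are: separating twists $T_\gamma$ generate (together with bounding pair maps) a dense subgroup of $\vI(\Sigma)$ by the Corollary to Theorem~\ref{thm.generation}; and one must characterize, purely group-theoretically inside a finite-index subgroup of $\vI(\Sigma)$, which elements are powers of separating twists. The standard device here is to recognize $T_\gamma^k$ via properties of its centralizer (e.g.\ it contains a large free abelian subgroup coming from disjoint curves, and has a specific structure reflecting the complementary pieces of $\gamma$), together with commutation relations distinguishing separating twists from bounding-pair maps. One then argues that $\phi$ sends a power of a separating twist to a power of a separating twist, and that $\phi$ preserves the disjointness relation (two separating twists commute iff the curves can be realized disjointly, essentially — modulo the usual care about curves whose complement has a component on which everything is trivial). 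This lets $\phi$ descend to an automorphism of an appropriate \emph{separating curve complex} $\vC_{\mathrm{sep}}(\Sigma)$ (or a variant thereof suited to infinite type).

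\medskip

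For steps (ii)--(iii), one needs a rigidity statement of the form: every automorphism of $\vC_{\mathrm{sep}}(\Sigma)$ is induced by a homeomorphism of $\Sigma$, and the latter is unique up to isotopy; equivalently $\Aut\,\vC_{\mathrm{sep}}(\Sigma) \cong \Mod^{\pm}(\Sigma)$ (no boundary, so we must also track orientation, or restrict to $\Mod(\Sigma)$ using that Torelli elements are orientation-preserving and that $\phi$ being defined on a subgroup of $\vI$ forces the orientation-preserving representative). For finite type this is due to Brendle–Margalit and McCarthy–Vautaw; in the infinite-type setting the analogous simplicial-rigidity results have recently appeared (e.g.\ for the curve complex and related complexes of infinite-type surfaces), and the argument is local: a homeomorphism is built by recognizing, combinatorially, pairs of pants, then gluing the local identifications consistently across the exhaustion of $\Sigma$ by finite-type subsurfaces, using that an infinite-type surface is determined by its finite-type pieces together with its end structure. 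The main obstacle I anticipate is precisely this step: in infinite type one must ensure that the locally-defined homeomorphism assembles into a \emph{globally} well-defined homeomorphism of $\Sigma$ (respecting the end space), and that the finite-index hypothesis on $H_i$ does not obstruct recovering enough separating curves — one must check that a finite-index subgroup of $\vI(\Sigma)$ still contains powers of $T_\gamma$ for \emph{every} separating curve $\gamma$, which follows since $\langle T_\gamma\rangle \cong \ZZ$ meets any finite-index subgroup.

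\medskip

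Finally, for step (iv): the homeomorphism $f$ produced in step (iii) conjugates a finite-index subgroup of $\vI(\Sigma)$ into $\vI(\Sigma)$ and agrees with $\phi$ on a further finite-index subgroup, so $[\phi] = [c_f]$ in $\Comm\vI(\Sigma)$; this gives a surjection $\Mod(\Sigma) \to \Comm\vI(\Sigma)$. Injectivity amounts to showing that a mapping class centralizing a finite-index subgroup of $\vI(\Sigma)$ is trivial, which follows from the fact that $\vI(\Sigma)$ — and hence any finite-index subgroup, which is still dense enough, containing powers of all separating twists — has trivial centralizer in $\Mod(\Sigma)$ (an infinite-type analogue of the finite-type fact, provable by noting an element commuting with $T_\gamma^k$ for all separating $\gamma$ must fix every separating curve, hence be central in $\Mod(\Sigma)$, hence trivial). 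The composite $\Mod(\Sigma) \to \Aut\vI(\Sigma) \to \Comm\vI(\Sigma)$ then shows all three maps in the displayed chain are isomorphisms, since $\Mod(\Sigma) \to \Aut\vI(\Sigma)$ is injective (same triviality-of-centralizer argument) and $\Aut\vI(\Sigma) \to \Comm\vI(\Sigma)$ is injective a fortiori and surjective by the above.
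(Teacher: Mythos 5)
Your overall template (algebraically recognize twist-like elements inside finite-index subgroups, get an induced automorphism of a curve-type complex, invoke simplicial rigidity to produce a mapping class, then deduce the three isomorphisms) is exactly the paper's strategy. But there is a genuine gap at the step you label (ii)--(iii): you cite, as if available, a rigidity theorem for the separating curve complex of an \emph{infinite-type} surface. No such result exists in the literature; the existing infinite-type rigidity results (Bavard--Dowdall--Rafi, Hern\'andez--Morales--Valdez) concern the full curve complex, and rigidity of the relevant Torelli-type complex is precisely the new technical heart of this paper (Theorem~\ref{thm.complex}). The paper has to prove it from scratch: it introduces the Torelli complex (vertices are separating curves \emph{and} bounding pairs), develops the combinatorial notion of sides and a type classification of vertices (Lemma~\ref{lemma.2sided}), uses the finite-versus-infinite diameter dichotomy (Lemma~\ref{lem.diameter}) and Fanoni's non-accumulation lemma (Lemma~\ref{Lem:Fanoni}) to recognize the finite-type side of the boundary of a principal exhaustion, and only then invokes Kida--Korkmaz rigidity on the finite-type pieces and assembles the homeomorphisms along the exhaustion. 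Your one-line sketch of ``recognizing pairs of pants combinatorially'' does not apply here, since pants curves and non-separating curves are not vertices of the complex you propose.

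A second, related gap is your decision to work with separating twists only. The natural algebraic characterization inside a finite-index subgroup $G<\vI(\Sigma)$ (the paper's Proposition~\ref{cor.algcar}, built on Proposition~\ref{prop.finite.support} and Lemma~\ref{EssSupp}) detects the class of elements that are powers of separating twists \emph{or} bounding pair maps; it does not by itself distinguish the two types, and in both cases the center of the relevant centralizer is infinite cyclic, so the rank-type tricks from finite-type arguments are not available. A commensuration could a priori exchange the two kinds of vertices, which is why the paper feeds both into one complex and recovers the type distinction combinatorially (via sides) rather than algebraically. Your phrase ``commutation relations distinguishing separating twists from bounding-pair maps'' is not backed by an actual mechanism, and if you discard bounding pairs you must additionally argue (for positive-genus ends) that an automorphism of the separating-curve complex alone suffices, and that a mapping class fixing every separating curve is trivial — statements that need proofs in the infinite-type setting. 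Your step (iv) is essentially fine (the paper likewise derives $\psi(g)=fgf^{-1}$ from the twist conjugation relation and the Alexander method, and injectivity from the fact that fixing all vertices of the complex forces triviality), but as written the proposal defers the main content of the theorem to unavailable references.
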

  
  %First, by the so-called
  %{\em Alexander method} for infinite-type surfaces \cite{HMVAlex}, the
  %center of $\Mod(\Sigma)$ is the abelian group generated by the Dehn
  %twists about boundary components of $\Sigma$. Second, we stress that, to
  Before continuing we stress that to the best of our knowledge, it is not known whether the Torelli group of
  an infinite--type surface has any finite-index subgroups, and so in
  particular it is possible that  $\Comm \vI(\Sigma)$ may in fact coincide
  {\em a priori} with $\Aut \vI(\Sigma)$.

  \subsection*{Historical context and idea of  proof}
 
  %Theorem \ref{thm.commensurator} was previously known to hold for
  %finite--type surfaces. Indeed, Farb--Ivanov \cite{FI} proved it for
  %closed surfaces of genus at least five, which was then extended (and
  %generalized to other well-known subgroups of the mapping class group) by
  %Brendle--Margalit to all closed surfaces of genus at least three
  %\cite{BM,BMadden}. Kida extended the result of Brendle--Margalit to all
  %finite--type surfaces of genus at least four \cite{Kida}.  Finally,
  %recent work of Brendle--Margalit has further generalized the result to
  %apply to a large class of normal subgroups of finite--type surfaces
  %\cite{BMmeta}.

  Theorem \ref{thm.commensurator} was previously known to hold for
  finite--type surfaces. Indeed, Farb--Ivanov \cite{FI} proved it for
  closed surfaces of genus at least five, which was then extended (and
  generalized to the Johnson Kernel) by Brendle--Margalit to all closed
  surfaces of genus at least three \cite{BM,BMadden}. Kida extended the
  result of Brendle--Margalit to all finite--type surfaces of genus at
  least four \cite{Kida}.  Finally, recent work of Brendle--Margalit and
  McLeay has further generalized the result to apply to a large class of
  normal subgroups of  finite--type surfaces \cite{BMmeta,Alan}.  
 
  %Indeed, their work suggests a question for future study for
  %infinite-type surfaces: for which subgroups of $\Mod(\Sigma)$ does
  %Theorem \ref{thm.commensurator} hold?

  In order to prove the theorem, we closely follow Brendle--Margalit's
  strategy. First, we adapt ideas of Bavard--Dowdall--Rafi \cite{BDR} to
  show that every commensuration of the Torelli group respects the property
  of being a separating twist or a bounding pair map. From this we deduce
  that every commensuration induces an automorphism of a combinatorial
  object called the \emph{Torelli complex}. This complex was originally
  introduced, for closed surfaces, by Brendle--Margalit \cite{BM}, who
  proved that its automorphism group coincides with the mapping class
  group; this was later extended by Kida \cite{Kida} to finite--type
  surfaces with punctures. Using this, plus an inductive argument due to
  Ivanov \cite{Ivanov}, we will show that every automorphism  of the
  Torelli complex of an infinite--type surface is induced by a surface
  homeomorphism. At this point, Theorem \ref{thm.commensurator} will follow
  easily using a well-known argument of Ivanov \cite{Ivanov}. 
  
  \subsection*{Acknowledgments} 
  
  This note is the result of a conversation during the conference
  ``Geometry of Teichm\"uller spaces and mapping class groups'' at the
  University of Warwick in the Spring of 2018. We are grateful to the
  organizers, Tara Brendle, Mladen Bestvina, and Chris Leininger for the
  opportunity to participate. We also thank Dan Margalit for several
  helpful comments. 
  
  We are very thankful to the referee for comments and suggestions that
  helped improve the paper, especially for an alternate argument
  establishing one of the containments in Theorem \ref{thm.generation}; see
  the end of Section \ref{section:compact.torelli}. We also thank the
  referee for telling us about Lemma \ref{Lem:Fanoni}, whose proof is due
  to Federica Fanoni. 

  Kent, Tao, and Winarski were partially supported by NSF grants
  DMS-1107452, DMS-1107263, DMS-1107367 “RNMS: Geometric Structures and
  Representation Varieties” (the GEAR Network). Aramayona was partially
  supported by grants RYC-2013-13008 and MTM-2015-67781. Kent acknowledges
  support from NSF grant DMS-1350075 and the Vilas Trustees at the
  University of Wisconsin. Tao acknowledges partial support from NSF grant
  DMS-1440140. Winarski acknowledges partial support from the AMS-Simons
  Travel Grants which are administered by the American Mathematical
  Society, with support from the Simons Foundation. 

\section{Definitions}

  \label{sec.defs}
  In this section we introduce the main objects needed for the proofs of our
  results. 

  \subsection{Surfaces} 
  
  Throughout,  by a {\em surface} we mean a connected, oriented,
  second-countable topological surface with (possibly empty) compact
  boundary. We say that $\Sigma$ has {\em finite type} if its fundamental
  group is finitely generated; otherwise, we say that $\Sigma$ has {\em
  infinite type}. In the finite type case, we will sometimes use the
  notation $\Sigma = \Sigma_{g,p}^b$, where $g$, $p$, and $b$ are,
  respectively, the genus, the number of punctures, and the number of
  boundary components of $\Sigma$. In this case, we define the {\em
  complexity} of $\Sigma$ to be the integer $\xi(\Sigma) = 3g - 3 + p+b$.

  The {\em space of ends} of $\Sigma$ is the set
  \[\Ends(\Sigma)= \varprojlim \pi_0( \Sigma\setminus K),\] 
  where the inverse limit is taken over the set of compact subsets
  $K\subset \Sigma$, directed with respect to inclusion. Here, the topology
  on $\Ends(\Sigma)$ is given by the limit topology obtained by equipping
  each $\pi_0(\Sigma\setminus K)$ with the discrete topology. See
  \cite{Richards} for further details. 

  We say that $e$ in $\Ends(\Sigma)$ is {\em accumulated by genus} if every
  neighborhood of $e$ has infinite genus; otherwise, we say that $e$ is
  {\em planar}. In particular, observe that every puncture of $\Sigma$ is a
  planar end. We denote by $\Ends_g(\Sigma)$ the subset of $\Ends(\Sigma)$
  consisting of ends accumulated by genus. It is a classical theorem (see
  \cite{Richards} for a discussion and proof) that the homeomorphism type
  of $\Sigma$ is determined by the tuple
  \[ \big( g(\Sigma), b(\Sigma), \Ends(\Sigma), \Ends_g(\Sigma)\big),\] where
  $g(\Sigma)$ and $b(\Sigma)$ denote the genus and the number of boundary
  components of $\Sigma$.
  
  \subsection{Curves and domains} 
  
  By a {\em curve} on $\Sigma$ we mean the free homotopy class of a simple
  closed curve that does not bound a disk or a disk containing a single
  planar end of $\Sigma$. Abusing notation, we will not make any
  distinction between a curve and any of its representatives. 

  We say that a curve $\gamma$ is {\em separating} if $\Sigma \setminus
  \gamma$ has two connected components; otherwise, we say that $\gamma$ is
  {\em non-separating}. We say that two curves are {\em disjoint} if they
  have disjoint representatives in $\Sigma$. A \emph{multicurve} is a set
  of pairwise disjoint curves. Given two curves $\alpha$ and $\beta$, we
  denote by $\I(\alpha,\beta)$ their geometric intersection number. The
  intersection number between two multicurves is defined additively. 

  A \emph{domain} $Y$ in $\Sigma$ is a closed subset which is itself a
  surface and the inclusion map is a proper, $\pi_1$--injective embedding.
  Note that domains are only defined up to isotopy. A \emph{subsurface} of
  $\Sigma$ is a disjoint union of domains. The following definition appears
  in \cite{BDR}. 
  
  \begin{defn} 
    A domain $Y$ of $\Sigma$ is called \emph{principal} if $Y$ has
    finite-type and every component of $\Sigma \setminus Y$ has
    infinite-type.
  \end{defn}
 
  The following lemma was communicated to us by the referee. The statement
  and the proof are due to Federica Fanoni.

  \begin{lem} \label{Lem:Fanoni}

    Let $\sigma$ be multicurve. Then there are curves realizing the
    homotopy classes in $\sigma$ which do not accumulate in any compact set
    of $\Sigma$ if and only if for every $\alpha \notin \sigma$, the set
    $\{\beta \in \sigma | \I(\alpha, \beta) \ne 0\}$ is finite.

  \end{lem}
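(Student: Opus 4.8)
The plan is to prove the two directions separately, with the interesting content living in the "if" direction. For the "only if" direction, suppose the classes in $\sigma$ are realized by representatives $\{c_\beta\}_{\beta \in \sigma}$ that do not accumulate in any compact set; equivalently, every compact set $K \subset \Sigma$ meets only finitely many of the $c_\beta$. Fix any curve $\alpha \notin \sigma$ and choose a compact representative $c_\alpha$. Then $c_\alpha$ is contained in a compact set $K$, which meets only finitely many $c_\beta$; since $\I(\alpha,\beta) \le |c_\alpha \cap c_\beta|$ (and in fact one may isotope so equality is irrelevant — disjointness of $c_\alpha$ from $c_\beta$ forces $\I(\alpha,\beta)=0$ only after minimal position, but a curve disjoint from $c_\alpha$ has zero geometric intersection with $\alpha$), only finitely many $\beta$ can have $\I(\alpha,\beta) \ne 0$. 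The one subtlety is that geometric intersection number is the minimum over representatives, so $c_\beta \cap c_\alpha = \emptyset$ genuinely implies $\I(\alpha,\beta)=0$; this gives the claim.

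For the "if" direction, assume that for every $\alpha \notin \sigma$ the set $\{\beta \in \sigma : \I(\alpha,\beta)\ne 0\}$ is finite; I want to produce representatives of the classes in $\sigma$ that are locally finite (do not accumulate on any compact set). The strategy is to exhaust $\Sigma$ by an increasing sequence of finite-type domains $K_1 \subset K_2 \subset \cdots$ with $\bigcup_n K_n = \Sigma$ and each $K_n$ contained in the interior of $K_{n+1}$, chosen so that each frontier curve $\partial K_n$ is essential. The key claim is that each $\partial K_n$, being a multicurve not in $\sigma$ (or, if some component lies in $\sigma$, one perturbs the exhaustion slightly — or treats those components separately), meets only finitely many classes of $\sigma$ essentially; hence all but finitely many $\beta \in \sigma$ can be isotoped off $\partial K_n$ and therefore into $\Sigma \setminus K_n$ or into a single complementary piece. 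One then builds the representatives inductively: having placed representatives of the finitely many $\beta$ that "interact" with $K_n$ so that they lie in $K_{n+1}$, one ensures the remaining curves are pushed outside $K_n$. A diagonal/limiting argument across all $n$ then yields a locally finite family: any compact set lies in some $K_n$ and meets only the finitely many curves assigned to $K_{n+1}$.

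The main obstacle is the inductive bookkeeping in the "if" direction: one must realize \emph{all} the classes of $\sigma$ simultaneously and pairwise disjointly by a single locally finite family, not merely realize each one individually away from a given compact set. Concretely, when moving from $K_n$ to $K_{n+1}$ one has already fixed representatives for the curves interacting with $K_n$, and must extend the choice to the (finitely many new) curves interacting with $K_{n+1}$ without disturbing disjointness from the already-placed ones, while simultaneously guaranteeing that the not-yet-placed curves stay in $\Sigma \setminus K_n$. This is a standard but delicate "infinite change of coordinates" argument; the finiteness hypothesis is exactly what makes each finite stage tractable, and the nesting $K_n \subset \operatorname{int} K_{n+1}$ is what lets the stages be glued into a globally consistent, locally finite configuration. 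Care is also needed with the degenerate case where some components of the exhausting frontiers happen to be isotopic to curves of $\sigma$; this is handled by either choosing the exhaustion generically or by enlarging the finite "active set" at each stage to absorb such coincidences.
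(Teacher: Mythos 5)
Your forward direction is fine (and is the direction the paper dismisses as clear). The problem is the converse: what you have written is a plan, not a proof, and the missing part is exactly the substantive content of the lemma. You must produce a \emph{single} family of pairwise disjoint representatives realizing \emph{all} classes of $\sigma$ simultaneously and locally finitely. Your induction over an exhaustion $K_1\subset K_2\subset\cdots$ requires, at each stage, (i) that all but finitely many classes of $\sigma$ can be simultaneously isotoped off $K_n$ while remaining disjoint from each other and from the finitely many representatives already frozen at earlier stages, and (ii) that these infinitely many stage-by-stage isotopies cohere into one globally consistent configuration. You acknowledge this yourself and then wave it away as a ``standard but delicate infinite change of coordinates argument''; since that is where all the difficulty lies, the proof as written has a genuine gap. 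There are also smaller unaddressed points: a class of $\sigma$ with zero intersection with every component of $\partial K_n$ may be isotopic into $K_n$ rather than out of it, and finiteness of such classes needs the (easy but necessary) remark that a finite-type surface carries only finitely many pairwise disjoint, pairwise non-isotopic essential curves; and your treatment of frontier components that happen to lie in $\sigma$ (``perturb the exhaustion'') is unnecessary anyway, since curves of a multicurve have zero intersection with each other, so the finiteness of the active set holds regardless.

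For comparison, the paper avoids all of this bookkeeping with one global choice: fix a complete hyperbolic metric on $\Sigma$ without half-planes and take the geodesic representatives of all classes in $\sigma$ at once. These are automatically pairwise disjoint, so the simultaneous realization is free, and non-accumulation is then a purely local argument: an accumulation point would lie in some pair of pants $P$ of a geodesic pants decomposition, only finitely many curves of $\sigma$ meet $\partial P$ by hypothesis (or by disjointness, if a component of $\partial P$ is in $\sigma$), and a pair of pants contains no other simple closed geodesics, so only finitely many curves of $\sigma$ enter $P$ --- a contradiction. If you want to salvage your combinatorial approach, you would need to actually carry out the inductive change-of-coordinates (e.g.\ extending homeomorphisms supported in $\overline{K_{n+1}\setminus K_n}$ and checking compatibility), or simply adopt the geodesic-representative argument, which replaces the entire induction.
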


  \begin{proof}
    
    The forward direction is clear so we will focus on the other direction.
    Choose a complete hyperbolic metric on $\Sigma$ without half planes and
    realize all curves as geodesics. The assumption on the metric implies
    that we can decompose $\Sigma$ into pairs of pants with geodesic
    boundary, funnels and/or cusps (as shown in \cite{HMVAlex}). If the
    geodesic representative of $\sigma$ has an accumulation point, then we
    can find a pair of pants $P$ of $\Sigma$ containing such a point.
    However, by assumption, there are only finitely many curves of $\sigma$
    that intersect the boundary of $P$. Thus, only finitely many curves of
    $\sigma$ intersect $P$. In particular, they cannot accumulate in $P$, a
    contradiction.\qedhere 

  \end{proof}

  Given an element $f \in \Mod(\Sigma)$, there is a canonical (possibly
  empty) multicurve $\partial f$ in $\Sigma$ for which $f(\partial f) =
  \partial f$, defined as follows. Let $\mathcal{O}(f)$ be the set of
  curves $\alpha$ such that $\{f^k(\alpha) | k \in \ZZ\}$ is finite. Then
  $\partial f$ is the set of curves in $\mathcal{O}(f)$ that are disjoint
  from all other elements of $\mathcal{O}(f)$. 
  See \cite{HT}[\textsection 2] and also
  \cite{BDR} for further details. 
  
  Given $f \in \Mod(\Sigma)$ and a subsurface $Y$, we say $Y$
  \emph{supports} $f$ or $f$ is \emph{supported} on $Y$ if $f$ can be
  realized by a homeomorphism which is the identity outside of $Y$. An element
  $f \in \Mod(\Sigma)$ is said to have \emph{compact} support if there is a
  compact subsurface that supports $f$. Similarly, $f$ has \emph{finite}
  support if there is a subsurface of finite-type that supports $f$. If $f$
  is a product of Dehn twists about a multicurve $\alpha$, then we will
  also say $\alpha$ supports $f$.
  
  The following statement follows from \cite{BDR}. The first assertion
  is Lemma 2.6, while the second follows from its proof. 

  \begin{lem} \label{Reducing}
    If $f \in \Mod(\Sigma)$ is nontrivial and has finite support, then $f$
    has infinite order and $\partial f$ is a nonempty multicurve in
    $\Sigma$. Furthermore, let $Y$ be the (finite) union of all finite-type
    domains in $\Sigma \setminus \partial f$, then $Y$ supports $f$.  
  \end{lem}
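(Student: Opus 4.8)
This is essentially \cite[Lemma 2.6]{BDR} together with an observation from its proof; here is the route I would take. I would first fix any finite-type subsurface supporting $f$ and enlarge it to a finite-type subsurface $S$ supporting $f$ with the additional properties that every component of $\Sigma \setminus S$ has infinite type, every component of $\partial S$ is essential (none bounds a disk, a once-punctured disk, or is isotopic to a component of $\partial\Sigma$), and no two components of $\partial S$ are isotopic in $\Sigma$; all of this is possible because $\Sigma$ has infinite type and $f \neq \id$. With such a choice the standard analysis of geometric subgroups makes the natural map $\Mod(S,\partial S) \to \Mod(\Sigma)$ injective, and its source is torsion-free (the mapping class group of a compact surface with nonempty boundary, boundary fixed pointwise, is torsion-free --- classically, a finite-order element would be realized as an orientation-preserving isometry of a hyperbolic structure on $S$ fixing the boundary geodesics pointwise, hence trivial). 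Since $f$ is the image of a nontrivial element $\bar f \in \Mod(S,\partial S)$, it follows at once that $f$ has infinite order.

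Next I would show that $\partial f$ is a nonempty finite multicurve. For finiteness: every curve $\gamma$ of $\partial f$ can be isotoped into $S$, for otherwise $\gamma$ has an essential arc in some infinite-type component $W$ of $\Sigma \setminus S$, and then $W$ carries a curve $\alpha$ with $\I(\alpha,\gamma)\neq 0$; but $\alpha$ is disjoint from $S$, hence fixed by $f$, so $\alpha \in \mathcal{O}(f)$, contradicting $\gamma \in \partial f$. As $S$ is of finite type, $\partial f$ is therefore finite. For nonemptiness I would apply the Nielsen--Thurston classification to $\bar f$: being infinite order over a surface with nonempty boundary, $\bar f$ is reducible or pseudo-Anosov. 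In the pseudo-Anosov case, any curve meeting $\partial S$ essentially has an essential arc in $S$, hence infinite $f$-orbit, so it is not in $\mathcal{O}(f)$; thus every component of $\partial S$ lies in $\partial f$. In the reducible case, the same reasoning applied to the boundary of a pseudo-Anosov piece of $S \setminus \mathcal{R}(\bar f)$, or (if there is no such piece) to a curve of $\mathcal{R}(\bar f)$ about which $\bar f$ twists nontrivially --- one of these must exist, as otherwise $\bar f$ would be trivial --- exhibits a curve that no element of $\mathcal{O}(f)$ crosses, i.e.\ a curve of $\partial f$. Either way $\partial f \neq \emptyset$.

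Finally, the ``furthermore'' clause. Since $\partial f$ is a finite multicurve, $\Sigma \setminus \partial f$ has finitely many components, so the union $Y$ of its finite-type components is itself of finite type. If some infinite-type component $Z$ of $\Sigma \setminus \partial f$ had $f|_Z \neq \id$, then $f|_Z$ would be a nontrivial finitely supported mapping class of $Z$ (its support lies in $Z \cap S$), and applying the first part of the lemma to $f|_Z$ would produce a nonempty multicurve $\partial(f|_Z)$ in the interior of $Z$ --- reducing curves for $f$ disjoint from $\partial f$, which is impossible for the canonical multicurve $\partial f$, on each complementary component of which $f$ is periodic or pseudo-Anosov. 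Hence $f$ restricts to the identity on every infinite-type component of $\Sigma \setminus \partial f$, so $Y$ supports $f$.

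The main obstacle is the second step: reconciling the intrinsically defined multicurve $\partial f$ with the Nielsen--Thurston data of the restriction $\bar f$ to a finite-type support, i.e.\ controlling exactly which curves land in $\mathcal{O}(f)$. One must rule out ``phantom'' curves coming from the infinite-type complement of $S$ that could cross a candidate component of $\partial f$, while confirming conversely that the reducing curves produced on $S$ remain essential and canonical in the ambient $\Sigma$. Everything else --- torsion-freeness, the facts about geometric subgroups, and the bookkeeping about complementary domains --- is routine.
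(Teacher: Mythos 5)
The paper gives no argument of its own here: it quotes \cite{BDR} (Lemma 2.6 and its proof), and your overall route --- enlarge the support to a finite-type $S$ whose complementary components have infinite type, use torsion-freeness of $\Mod(S,\partial S)$ and injectivity of $\Mod(S,\partial S)\to\Mod(\Sigma)$ for infinite order, and Nielsen--Thurston data of $\bar f$ to produce curves of $\partial f$ and to bound $\partial f$ inside $S$ --- is essentially that argument, and those first two assertions are in order as you set them up.

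The ``furthermore'' step, however, has a genuine gap. Your contradiction reads: a nonempty $\partial(f|_Z)$ in an infinite-type component $Z$ gives ``reducing curves for $f$ disjoint from $\partial f$, which is impossible for the canonical multicurve $\partial f$, on each complementary component of which $f$ is periodic or pseudo-Anosov.'' But $\partial f$ is defined purely combinatorially (curves of $\mathcal{O}(f)$ disjoint from all other curves of $\mathcal{O}(f)$); the property that $f$ is periodic or pseudo-Anosov on the complementary pieces is not part of that definition --- it is essentially what the ``furthermore'' clause is asserting, so as written the step is circular. The fix is to argue from the definition: every $\alpha\in\mathcal{O}(f)$ has $\I(\alpha,\gamma)=0$ for each $\gamma\in\partial f$, hence is isotopic into a complementary piece; so a curve $\delta\in\partial(f|_Z)$, being disjoint from $\mathcal{O}(f|_Z)$ inside $Z$ and from everything supported in the other pieces, is disjoint from all other elements of $\mathcal{O}(f)$ and would have to lie in $\partial f$, contradicting that it is essential and non-peripheral in $Z$. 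Even with this repair, note what you actually get: $f|_Z$ is trivial in $\Mod(Z)$ (not rel boundary), so $f$ is supported on $Y$ together with an annular neighborhood of $\partial f$; twisting about a curve of $\partial f$ both of whose sides have infinite type is not excluded. Indeed for $f=T_\gamma$ with $\gamma$ separating and both sides infinite type one has $\partial f=\{\gamma\}$ and $Y=\emptyset$, so the final sentence ``hence $Y$ supports $f$'' does not follow (and cannot, under the literal reading). This is also how the paper itself uses the lemma later: in Lemma \ref{EssSupp} it explicitly allows a power of $f$ to be a multitwist about curves of $\partial f$ and rules that out only via the Torelli condition, i.e.\ the support statement needed and proved in \cite{BDR} is ``finite-type pieces plus a neighborhood of $\partial f$,'' which is what your argument should conclude.
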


  \subsection{Pure mapping classes}The \emph{pure mapping class group}
  $\PMod(\Sigma)$ is the normal subgroup of $\Mod(\Sigma)$ whose elements
  fix every end of $\Sigma$. 

  The set of compactly--supported mapping classes $\PMod_c(\Sigma)$ forms a
  subgroup of $\PMod(\Sigma)$ which is normal in $\Mod(\Sigma)$. Note that
  if $f$ lies in $\PMod(\Sigma)$ and $f$ has finite support, then it
  necessarily has compact support. Since $\PMod_c(\Sigma)$ is a direct
  limit of pure mapping class groups of compact surfaces, a classical
  result due to Dehn and Lickorish (see \cite[Section 4]{FM}, for instance)
  implies that $\PMod_c(\Sigma)$ is generated by {\em Dehn twists}. 

  \subsection{Handle Shifts}

  For any subgroup $\Gamma < \Mod(\Sigma)$, we denote by
  $\overline{\Gamma}$ its topological closure in $\Mod(\Sigma)$.

  Patel--Vlamis introduced {\em handle shifts} and showed that handle
  shifts and Dehn twists topologically generate $\PMod(\Sigma)$ \cite{PV}.
  Subsequently, in \cite{APM}  it was shown that $\PMod(\Sigma) =
  \ol{\PMod_c(\Sigma)} \rtimes H$, where $H$ is a particular subgroup
  isomorphic to a direct product of pairwise commuting \emph{handle
  shifts}. We now recall the definition of a handle shift.

  Let $\Lambda$ be the surface obtained from $\RR \times [-1,1]$ by
  removing disks of radius $\frac 14$ centered at $(t,0)$ for $t$ in $\ZZ$
  and gluing in a torus with one boundary component, identifying the
  boundary of the torus with the boundary of the removed disk.  Let
  $\sigma:\Lambda \to \Lambda$ be the homeomorphism that shifts the handle
  at $(t,0)$ to the handle at $(t+1,0)$, and is the identity on $\RR \times
  \{-1,1\}$ (see \cite{APM} or \cite{PV} for an image of such a
  homeomorphism).  The isotopy class of $\sigma$ is called a handle shift
  of $\Lambda$.

  An element $h$ in $\Mod(\Sigma)$ is a {\it handle shift} if there exists
  a proper embedding $\iota:\Lambda \to \Sigma$ which induces an injective
  map on ends, and such that $[h] = [\delta]$ where
  $\delta\mid_{\iota(\Lambda)} = \sigma$ and $\delta$ is the identity
  outside $\iota(\Lambda)$.  As a consequence of our definition, we must
  have $\abs{\Ends_g(\Sigma)} \geq 2$; also, for each handle shift there is
  an attracting end $\epsilon_+$ and a repelling end $\epsilon_-$ in
  $\Ends_g(\Sigma)$, and they are distinct. 

  We say a handle shift $h$ with attracting end $\epsilon_+$ and repelling
  end $\epsilon_-$ is {\it dual} to a separating curve $\gamma$ if each
  component of $\Sigma\sm\gamma$ contains exactly one of $\epsilon_+$ and
  $\epsilon_-$.

  \subsection{Principal exhaustions} 
  
  We now introduce a minor modification of the notion of {\em principal
  exhaustion} from \cite{APM,BDR}. 

  \begin{defn} 
    A \emph{principal exhaustion} of $\Sigma$ is an infinite sequence of
    principal domains $\{P_1, P_2, \dots\}$ such that, for every $i\ge 1$,
    one has: 
    \begin{enumerate}
    \item $P_i \subset P_{i+1}$,
    \item \label{separating.boundaries} every component of $\partial P_i$
    is separating \item no component of $\partial P_i$ is isotopic to a
      component of $\partial P_{i+1}$, and
    \item $\Sigma = \bigcup P_i$.
    \end{enumerate}
  \end{defn}

  %%%%%%%%%%%%%%%%%%%%%%%%%%%%%%%%%%%%%
  % LEMMA about Principal exhaustion and homological decomposition
  %%%%%%%%%%%%%%%%%%%%%%%%%%%%%%%%%%%%%
  \begin{lem}\label{decomp}
    Let $\Sigma$ be a connected infinite--type surface and let $\{P_i\}$ be
    a principal exhaustion of $\Sigma$.  Then for all $i$, we have:
    \begin{itemize}
    \item for all $j>i$, $H_1(P_j)\cong H_1(P_i)\oplus M$ for some $M <
      H_1(P_j\setminus P_i)$
    \item $H_1(\Sigma)\cong H_1(P_i)\oplus M'$ for some $M' <
      H_1(\Sigma\setminus P_i)$
    \end{itemize}
  \end{lem}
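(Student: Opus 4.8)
The plan is to reduce everything to a Mayer--Vietoris computation together with the observation that the separating hypothesis on $\partial P_i$ forces the relevant connecting maps to vanish. Fix $i$ and first treat the finite-stage statement, i.e. the case $j>i$. Since every component of $\partial P_i$ is separating and $P_i\subset P_j$, the surface $P_j$ decomposes as $P_i\cup_{\partial P_i}(P_j\sm \operatorname{int} P_i)$, where the two pieces meet along the $1$--manifold $\partial P_i$ (a disjoint union of circles). Write $A=P_i$, $B=\overline{P_j\sm P_i}$, so $A\cap B=\partial P_i$. The Mayer--Vietoris sequence reads
\[
H_1(\partial P_i)\xrightarrow{\ (\iota_A,\iota_B)\ } H_1(A)\oplus H_1(B)\longrightarrow H_1(P_j)\longrightarrow H_0(\partial P_i)\xrightarrow{\ \phi\ } H_0(A)\oplus H_0(B).
\]
The key point is that each circle $c$ of $\partial P_i$ bounds on \emph{both} sides (it is separating in $\Sigma$, hence also in $P_j$ once we are on the correct side), so the class $[c]\in H_1(\partial P_i)$ maps to a class in $H_1(A)\oplus H_1(B)$ that actually dies; more usefully, since $c$ is separating \emph{in $P_j$}, $[c]$ is zero in $H_1(P_j)$, which means the composite $H_1(\partial P_i)\to H_1(A)\oplus H_1(B)\to H_1(P_j)$ is zero, i.e. $\operatorname{im}(H_1(\partial P_i))$ lands in the kernel of the second map, which is the image of the first. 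Combined with injectivity of $H_1(A)\to H_1(P_j)$ (again because a separating curve can't kill a cycle that persists), one concludes $H_1(A)\oplus H_1(B)\to H_1(P_j)$ is split injective on the $H_1(A)$ factor. I would then set $M$ to be the image of $H_1(B)=H_1(\overline{P_j\sm P_i})$ inside $H_1(P_j)$ under the inclusion-induced map, modulo the intersection with the image of $H_1(P_i)$; a diagram chase shows $H_1(P_j)=\iota_*H_1(P_i)\oplus M$ with $M<H_1(\overline{P_j\sm P_i})$ as claimed. (Here I am using $\ZZ$ coefficients and the fact that all the groups in sight are finitely generated free abelian, so "split injective" is automatic once injective onto a direct summand — it suffices to check the relevant quotient is torsion-free, which it is since it embeds in $H_1$ of a surface.)

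For the infinite-stage statement, extend a principal exhaustion $\{P_k\}$ with $P_i=P_{i_0}$ and pass to the direct limit: $H_1(\Sigma)=\varinjlim_k H_1(P_k)$, compatibly with the inclusions. Applying the finite-stage result repeatedly gives $H_1(P_k)=\iota_*H_1(P_i)\oplus M_k$ for all $k>i$, with $M_k<H_1(\overline{P_k\sm P_i})$ and $M_k\subset M_{k+1}$ compatibly; taking the colimit yields $H_1(\Sigma)=\iota_*H_1(P_i)\oplus M'$ with $M'=\varinjlim M_k < \varinjlim H_1(\overline{P_k\sm P_i})=H_1(\overline{\Sigma\sm P_i})$. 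One should check the colimit of a direct system of split short exact sequences is split exact — this is routine since filtered colimits are exact and respect direct sums, and the splittings can be chosen compatibly because at each stage the splitting is canonical (project along $M_k$). Finally I would note that $\overline{\Sigma\sm P_i}$ is the relevant "$\Sigma\sm P_i$" in the statement; since $\partial P_i$ is separating, removing the interior of $P_i$ from $\Sigma$ and then taking closure changes nothing about $H_1$ up to the obvious identification.

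The main obstacle is \emph{not} the Mayer--Vietoris bookkeeping but rather making the "$M$ sits inside $H_1$ of the complement" assertion canonical enough to survive the colimit — one must be careful that the splitting of $H_1(P_j)$ is natural in $j$, not just an abstract choice, otherwise the direct system of splittings need not be coherent. The clean fix is to define $M$ intrinsically: since $\partial P_i$ is separating in $P_j$, the subgroup $\iota_*H_1(P_i)$ is a direct summand whose complement can be taken to be the kernel of the map $H_1(P_j)\to H_1(P_j,\overline{P_j\sm P_i})\cong H_1(P_i,\partial P_i)$ induced by excision, or equivalently the image of $H_1(\overline{P_j\sm P_i})$; both descriptions are functorial in $j$, so the systems cohere and the colimit argument goes through without choices. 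I would spend the bulk of the write-up pinning down this functoriality and leave the Mayer--Vietoris exactness claims — vanishing of the connecting map on separating classes, injectivity on $H_1(P_i)$ — as short direct verifications, since they are standard facts about homology of surfaces cut along separating simple closed curves.
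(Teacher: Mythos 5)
There is a genuine gap, and it sits exactly where you lean on the separating hypothesis. Your key claim that a boundary circle $c$ of $P_i$, being separating, is null-homologous in $P_j$ (or in $\Sigma$) is false in this setting: ``separating implies null-homologous'' holds for closed surfaces, but here every complementary component of $P_i$ has infinite type, so $c$ never bounds a compact subsurface. Concretely, in $H_1(P_j)$ one has $[c]=-\sum_{l\in I}[\partial_l P_j]$, where $I$ is the nonempty set of boundary circles of $P_j$ lying beyond $c$; when $P_i$ has at least two boundary circles this is nonzero (compare the core curve of an open annulus, which separates yet generates $H_1$). Consequently the Mayer--Vietoris bookkeeping with $A\cap B=\partial P_i$ does not deliver what you assert: the vanishing of the composite $H_1(\partial P_i)\to H_1(A)\oplus H_1(B)\to H_1(P_j)$ is just exactness and carries no information; the map $H_1(A)\oplus H_1(B)\to H_1(P_j)$ genuinely fails to be injective; and the injectivity of $H_1(P_i)\to H_1(P_j)$ (which is true) is not established by ``a separating curve can't kill a cycle that persists''---the potential kernel lies in the subgroup spanned by the boundary classes and ruling it out needs an actual argument. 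Most seriously, your ``clean fix'' is wrong: the proposed canonical complement $M=\ker\bigl(H_1(P_j)\to H_1(P_j,\overline{P_j\setminus P_i})\bigr)$, i.e.\ the image of $H_1(\overline{P_j\setminus P_i})$, contains the nonzero classes $[c_k]$, which also lie in $\iota_*H_1(P_i)$, so it meets $\iota_*H_1(P_i)$ nontrivially whenever $P_i$ has more than one boundary circle and is not a complement. Since the coherence of splittings in your colimit step rests on this choice, the second bullet inherits the problem.

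The paper avoids all of this with one device you are missing: because each complementary component of $P_i$ is of infinite type, one can choose disjoint properly embedded arcs or rays, one crossing each boundary circle of $P_i$, and deformation retract $W$ (which is either $P_j$ or $\Sigma$, treated uniformly, so no colimit is needed) onto a subsurface $\Delta$ in which $P_i$ and its complement meet along arcs rather than circles. The Mayer--Vietoris intersection is then contractible, so injectivity of $H_1(P_i)\oplus H_1(\Delta\setminus P_i)\to H_1(W)$ is automatic, and the separating hypothesis is used only to kill the connecting map to $H_0$ of the arcs. Your route could be repaired without that trick---prove injectivity of $\iota_*$ via the intersection form and the boundary-class subgroup, show $H_1(P_j)/\iota_*H_1(P_i)$ is torsion-free by embedding it in $H_1(P_j,P_i)$, and then choose a splitting inside the image of $H_1(\overline{P_j\setminus P_i})$---but as written the decisive claims are incorrect.
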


  \begin{proof}[Proof of lemma]
    We will let $W$ be either $P_j$ or $\Sigma$ to prove both cases
    simultaneously.
   
    Let $\partial_1 P_i,  \ldots \partial_m P_i$ be the boundary components
    of $P_i$.  

    Since every component of $\Sigma - P_i$ is of infinite type, every
    component of $\overline{W - P_i}$ either contains an end of $\Sigma$ or
    a boundary component of $W$. So there is a collection of pairwise
    disjoint rays and arcs $\gamma_1,  \ldots, \gamma_m$ properly embedded
    in $\overline{W - P_i}$ such that $\gamma_k \cap \partial_k P_i$ is a
    single point for all $k$.

    By the Regular Neighborhood Theorem, we may deformation retract $W$
    along the $\gamma_k$, fixing $P_i$ throughout, to obtain a subsurface
    $\Delta$ homotopy equivalent to $W$ that contains $P_i$ and such that
    $P_i \cap \overline{\Delta - P_i}$ is a disjoint union of arcs
    $\alpha_1, \ldots, \alpha_m$, as pictured in Figure 1.
    
    \begin{figure}
    \input{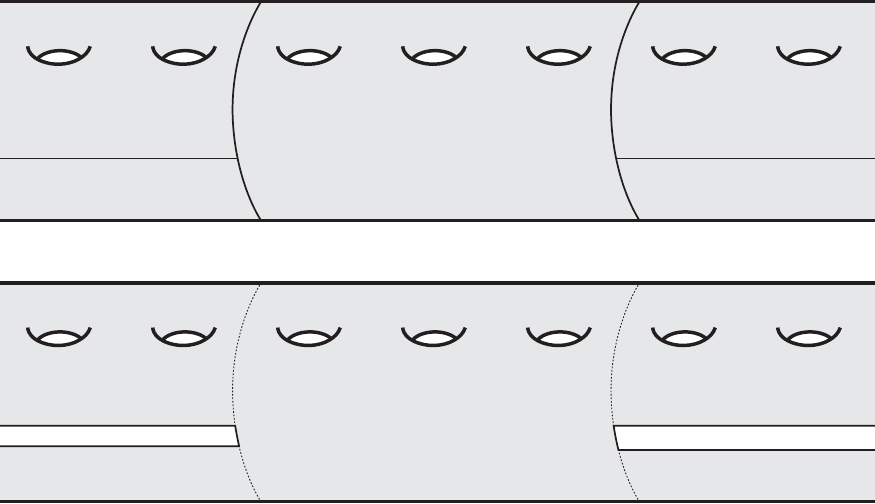tex_t}
    \caption{Pictured at the top is the surface $W$, the subsurface $P_i$,
      and the arcs $\gamma_1, \ldots, \gamma_m$.  Below is the surface
      $\Delta$, obtained by deleting open neighborhoods of the interiors of
      the $\gamma_k$. The $\alpha_k$ are the dotted arcs.}
    \end{figure}

    Consideration of the Mayer--Vietoris sequence gives us an exact
    sequence 
    \[
    0 \to H_1(P_i) \oplus H_1(\Delta-P_i) \longrightarrow H_1(W)
    \mathop{\longrightarrow}^\partial H_0(\alpha_1 \sqcup \cdots \sqcup
    \alpha_m). 
    \]
    This gives us the direct sum decomposition of $H_1(W)$.  Since
    $\partial_\ell P_i$ are separating, then so are the $\alpha_\ell$. This
    implies that the boundary map $\partial$ is zero, and since $H_1(\Delta
    - P_i)$ is naturally a subgroup of $H_1(W - P_i)$, the proof is
    complete. \qedhere 
  \end{proof}

\section{Compactly generating the Torelli group}

  \label{section:compact.torelli}
  Let $\Sigma$ be an infinite--type surface.  We define the \emph{compactly
  supported Torelli group} 
  \[
  \vI_c(\Sigma) := \{f \in \vI(\Sigma)\ | f \mbox{ has compact support}\}.
  \]

  The aim of this section is to prove the first main result of the
  introduction, whose statement we now recall:

  \begin{thmmain}\label{compact_torelli}
    For any connected oriented surface $\Sigma$ of infinite type, we have
    $\vI(\Sigma) = \ol{\vI_c(\Sigma)}$. 
  \end{thmmain}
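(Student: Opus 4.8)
The plan is to prove the two containments separately. The inclusion $\ol{\vI_c(\Sigma)} \subseteq \vI(\Sigma)$ is the easy direction: $\vI_c(\Sigma) \subseteq \vI(\Sigma)$ by definition, and $\vI(\Sigma)$ is the kernel of the continuous homomorphism $\Mod(\Sigma) \to \Aut(H_1(\Sigma,\ZZ))$ (with $\Aut(H_1(\Sigma,\ZZ))$ discrete, or at least $T_1$), hence closed; so the closure of $\vI_c(\Sigma)$ stays inside. The real content is the reverse inclusion $\vI(\Sigma) \subseteq \ol{\vI_c(\Sigma)}$: given $f \in \vI(\Sigma)$, we must approximate it, in the compact-open topology on $\Mod(\Sigma)$, by compactly supported Torelli elements. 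Concretely, a neighborhood basis of $f$ is given by sets of mapping classes agreeing with $f$ on a fixed compact subsurface, so it suffices to show: for every compact $K \subset \Sigma$ there is $g \in \vI_c(\Sigma)$ with $g = f$ on $K$ (up to isotopy rel $K$).

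The strategy is to combine the Patel--Vlamis/Aramayona--Patel--Vlamis generation result for $\PMod(\Sigma)$ with a homological correction. Fix a principal exhaustion $\{P_i\}$ of $\Sigma$. Since $\vI(\Sigma) < \PMod(\Sigma)$ and $\PMod(\Sigma) = \ol{\PMod_c(\Sigma)} \rtimes H$ with $H$ generated by commuting handle shifts, we can write $f$ as a limit of elements of $\PMod_c(\Sigma)$ composed with a handle-shift part. First I would argue that the handle-shift part must be trivial for $f \in \vI(\Sigma)$: a nontrivial handle shift acts nontrivially on $H_1(\Sigma,\ZZ)$ — it maps a homology class supported near the repelling end to one supported near the attracting end, and using Lemma \ref{decomp} one sees these are independent classes in $H_1(\Sigma)$ — so it cannot lie in the Torelli group, and neither can any nontrivial product of the commuting handle shifts in $H$. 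Hence $f \in \ol{\PMod_c(\Sigma)}$. So there is a sequence $f_n \in \PMod_c(\Sigma)$ with $f_n \to f$; for $n$ large, $f_n$ agrees with $f$ on $P_i$, and since $f \in \vI(\Sigma)$ acts trivially on $H_1$, in particular $f_n$ acts trivially on the image of $H_1(P_i) \hookrightarrow H_1(\Sigma)$ (which is a direct summand, by Lemma \ref{decomp}).

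The remaining issue is that $f_n$, while compactly supported and trivial on the $H_1(P_i)$-summand, need not be trivial on all of $H_1(\Sigma)$ — it may act nontrivially on homology classes living in $\supp(f_n) \setminus P_i$. The fix is a correction term: choose a compact subsurface $Q \supseteq \supp(f_n) \cup P_i$ with separating boundary whose complementary components are infinite-type, so $H_1(Q) \hookrightarrow H_1(\Sigma)$ is a summand by Lemma \ref{decomp}. Then $f_n$ restricted to $Q$ is a mapping class of the finite-type surface $Q$ whose action on $H_1(Q)$ fixes the sub-summand $H_1(P_i)$. Using Putman's generating set (separating twists and bounding pair maps generate the Torelli group of a finite-type surface with boundary), I would correct $f_n$ to a new element $g_n$, supported in $Q \setminus P_i$, acting trivially on $H_1(Q)$ — and since $\partial Q$ is separating, $g_n$ then acts trivially on all of $H_1(\Sigma)$ by the direct-sum decomposition. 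Concretely, $g_n = f_n \cdot c_n^{-1}$ where $c_n \in \PMod_c(\Sigma)$ is supported in $Q \setminus P_i$ and realizes the same $H_1(Q)$-action as $f_n$; such $c_n$ exists because the symplectic-type action on the relevant summand can be realized inside any genus-containing finite-type subsurface of the complement of $P_i$, and one may need to enlarge $Q$ to guarantee enough genus in $Q \setminus P_i$, which is possible precisely because $\Sigma$ has infinite type so the complement of any compact set contains arbitrarily much topology. Then $g_n \in \vI_c(\Sigma)$ and $g_n = f_n = f$ on $P_i$.

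The main obstacle is this last correction step: one must verify that the homological discrepancy of $f_n$ outside $P_i$ is realizable by a \emph{compactly supported} mapping class supported in the complement of $P_i$ and lying in the appropriate subgroup, which requires (i) knowing the complement of $P_i$ contains a finite-type subsurface of sufficient genus — true by infinite type — and (ii) a careful bookkeeping of how $H_1(\Sigma)$ splits compatibly with $H_1(P_i)$, $H_1(Q)$, and their complements, for which Lemma \ref{decomp} is exactly the tool. A cleaner alternative, which I would pursue in parallel, bypasses handle shifts entirely: directly take any compact $K$, enlarge to a principal domain $P_i \supseteq K$, realize $f|_{P_i}$ by a homeomorphism $\phi$ supported on $P_i$ (using that $\partial P_i$ is separating so $f$ preserves the isotopy class of $\partial P_i$ and its restriction makes sense as a mapping class of $P_i$), observe $\phi$ lies in the Torelli group of $P_i$ — here one uses that $f$ acts trivially on $H_1(\Sigma)$, hence on the summand $H_1(P_i)$ — and conclude $\phi \in \vI_c(\Sigma)$ with $\phi = f$ on $K$. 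This second route is likely the one to present, with the handle-shift argument relegated to a remark; the subtle point to get right is why $f|_{P_i}$ is a well-defined mapping class of $P_i$ lying in $\vI(P_i)$, which again comes down to the separating-boundary condition and Lemma \ref{decomp}.
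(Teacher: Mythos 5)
Your easy containment is fine, and the skeleton of your first route is essentially the paper's proof: one first shows $\vI(\Sigma) < \ol{\PMod_c(\Sigma)}$, then fixes a principal exhaustion, takes compactly supported approximants $\psi_n$ agreeing with $f$ on a large piece $P_j \supset \phi^{-1}(P_i)$, and corrects by a homeomorphism supported in $\overline{P_k - P_i}$ realizing the homological discrepancy (the paper builds this correction by realizing the relevant intersection-form-preserving matrix via Burkhardt's theorem on a capped-off surface, not via Putman's generating set, which generates Torelli elements and so cannot realize the non-Torelli discrepancy). The genuine gap is in your justification of the key step $f \in \ol{\PMod_c(\Sigma)}$. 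Writing $f = gh$ with $g \in \ol{\PMod_c(\Sigma)}$ and $h \in H$, the observation that nontrivial elements of $H$ act nontrivially on $H_1(\Sigma)$ only shows $\vI(\Sigma) \cap H = 1$; it does not force $h = 1$, because $h_* = g_*^{-1}$ and $g_*$ need not be trivial. What must be excluded is that an element of $\ol{\PMod_c(\Sigma)}$ acts on homology like a nontrivial product of commuting handle shifts, and this is exactly where the paper does its real work: it defines pseudo handle shifts and proves, by a block-matrix argument on $H_1(P_k)$, that they are never limits of compactly supported classes. The paper even flags why the naive intuition fails: some infinite products of handle shifts (the boundary leaf shift) \emph{are} limits of compactly supported classes, so "compactly approximable implies homologically tame at infinity" is false without proof. (The referee's alternate argument, via \cite{APM}: a non-approximable $f$ moves some separating curve to one of different topological type in every compact subsurface containing both, hence induces a different splitting of $H_1(\Sigma)$ --- is another way to close this gap, but you do not invoke it.)

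The "cleaner alternative" you say you would actually present does not work. A Torelli mapping class need not preserve the isotopy class of $\partial P_i$: separating curves are null-homologous, so acting trivially on $H_1(\Sigma)$ imposes essentially no constraint here, and for instance a twist about a separating curve crossing $\partial P_i$ lies in $\vI(\Sigma)$ yet takes $\partial P_i$ to a non-isotopic multicurve. Consequently $f(P_i)$ need not be isotopic to $P_i$, "$f|_{P_i}$" is not a well-defined mapping class of $P_i$, and the route collapses at its first step. A further warning sign is that this route would prove the hard containment with no real input; but the hard containment is precisely the assertion that $f$ can be matched on each compact piece by a compactly supported Torelli element, and that assertion needs both the approximability input ($f \in \ol{\PMod_c(\Sigma)}$, via pseudo handle shifts or the referee's argument) and the homological correction realized by a homeomorphism supported off $P_i$.
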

  
  \begin{rmk}
    The referee suggested an alternate way of proving that every element of
    $\vI(\Sigma)$ is a limit of mapping classes with compact support; we
    give this argument at the end of this section. However, we have decided
    to keep our original argument, phrased in terms of a new type of
    mapping class ({\em pseudo handle shifts}, see below), which may be of
    independent interest. 
  \end{rmk}

  %%%%%%%%%%%%%%%%%%%%%%%%%%%%%%%%%%%%%%%%%%
  % General inaccessibility lemma
  %%%%%%%%%%%%%%%%%%%%%%%%%%%%%%%%%%%%%%%%%%

  We will need to know that certain, possibly infinite, products of handle
  shifts are inaccessible by compactly supported mapping classes.  For a
  general product of handle shifts, this is too much to hope for.  For
  example, in a surface with two ends, the product of two commuting handle
  shifts with opposite dynamics is a limit of compactly supported classes.  

  More generally, there are products of infinitely many commuting handle
  shift that are limits of compactly supported classes.  For example, there
  is the ``boundary leaf shift,'' which we now explain. 

  \begin{ex}[Boundary leaf shift]  
    
    Start with an infinite regular tree $\vT$ properly embedded in the
    hyperbolic plane $\HH^2$ with boundary a Cantor set in $\partial
    \HH^2$.  Orient $\partial \HH^2$ counterclockwise.  Build a surface by
    taking the boundary of a regular neighborhood of $\vT$ in $\HH^2 \times
    \RR$ and attach handles periodically (in the hyperbolic metric) along
    each side of $\vT$, see Figure \ref{figure.leaf}.  The orientation on
    $\partial \HH^2$ defines a product $\vH$ of handle shifts by shifting
    the handles in each region of $\HH^2 - \vT$ in the clockwise
    direction. 
    
    To see the boundary leaf shift is in $\ol{\PMod_c(\Sigma)}$, pick a
    basepoint $*$ in $\vT$ and consider the $n$--neighborhood $B(n)$ of $*$
    in $\vT$.  Then we may move the handles incident to $B(n)$ around in a
    counterclockwise fashion to get a compactly supported class $f_n$ in $
    \PMod_c(\Sigma)$.  The sequence $\{f_n\}$ converges to the boundary
    leaf shift.

    \begin{figure}[htp!]
    \includegraphics{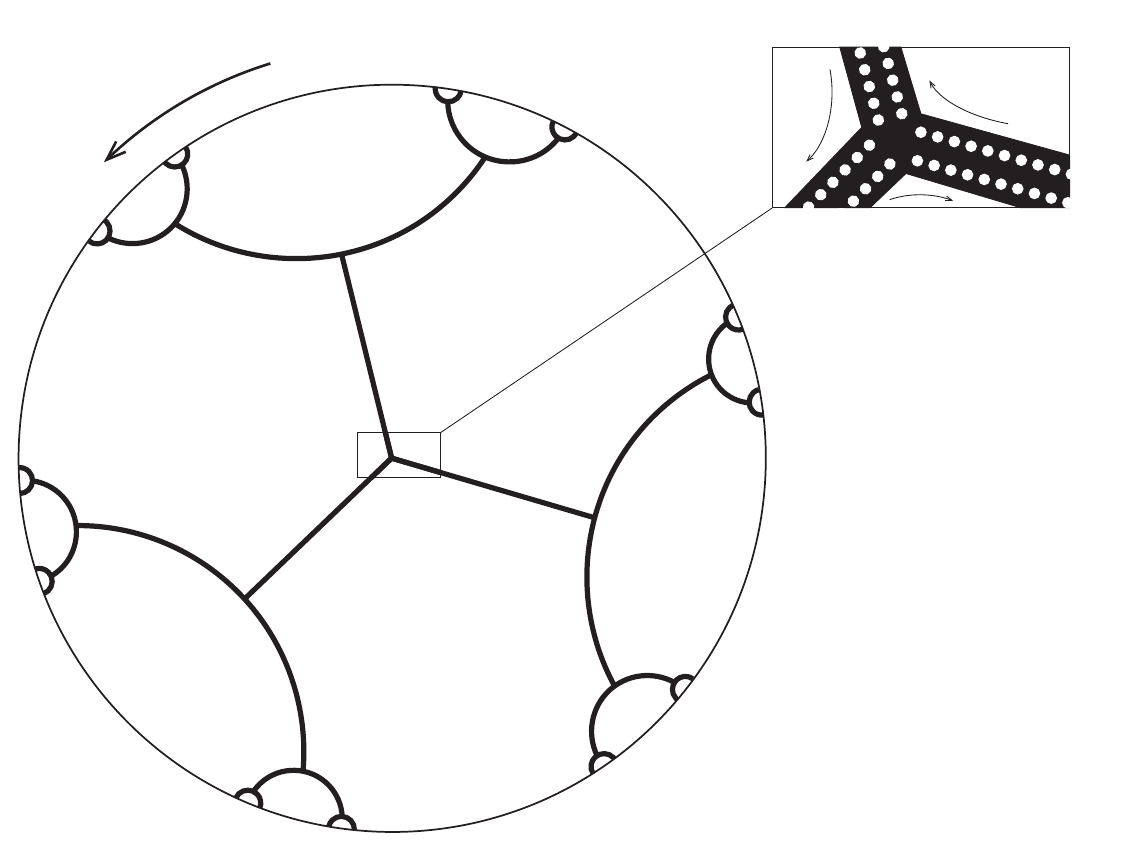}
    \caption{The boundary leaf shift.}
    \label{figure.leaf}
    \end{figure}
  \end{ex}

  Let $\gamma$ be a separating curve in $\Sigma$ whose complementary
  components are both noncompact. Let $\Sigma_-$ and $\Sigma_+$ be the
  closures of the two components of $\Sigma - \gamma$. By the same argument
  as in Lemma \ref{decomp}, $\Sigma$ deformation retracts to a subspace
  homeomorphic to $X \vee \gamma \vee Y$, where $X$ and $Y$ are subspaces
  of $\Sigma_-$ and $\Sigma_+$, respectively. It follows that $H_1(\Sigma)$
  splits as $A \oplus \langle \gamma \rangle \oplus B$, where $A = H_1(X)$
  and $B = H_1(Y)$.
  
  Similarly, if $h$ is a handle shift dual to $\gamma$, then $H_1(\Sigma)
  \cong L \oplus \langle \gamma \rangle \oplus H_1(\supp(h)) \oplus R$,
  where $L$ and $R$ are subgroups of $A$ and $B$.

  \begin{defn}[Pseudo handle shift]
    We say that a mapping class $\vH$ is a \textit{pseudo handle shift dual
    to a separating curve $\gamma$ with associated handle shift $h$} if the
    following hold:
    \begin{enumerate}
    \item $h$ is a handle shift dual to $\gamma$
    \item $\vH_*$ agrees with $h_*$ on $H_1(\supp(h))$
    \item $\vH_*([\gamma]) = [\gamma]$  
    \item $\vH_*(L) < A$ 
    \item $\vH_*(R) < B$
    \end{enumerate}
    \end{defn}
    In what follows, we always assume that the repelling end of $h$ is on
    the ``$A$--side.'' 
    
    \begin{exs} \label{ExPseudoHandleShifts}
      Let $h$ be a handle shift dual to a separating curve $\gamma$. Then
      $h$ is itself a pseudo handle shift dual to $\gamma$ (with associated
      handle shift $h$). 
      
     For a less trivial example, take a connected component $X$ of $\Sigma \setminus (\gamma \cup \supp(h))$. Then the
      composition $h$ with any element $g \in \Mod(\Sigma)$ supported on
      $X$ is a pseudo handle shift associated to $h$.
      In particular, if $h' = \prod h_i$ is a product of commuting handle
      shifts $h_i$ with dual curves $\gamma_i$, such that $h$ and $h'$
      commute, $\supp(h')$ is disjoint from $\gamma$ and $\supp(h)$ is
      disjoint from all $\gamma_i$, then the composition $k=h\circ h'$ is a
      pseudo handle shift associated to $h$. 
      
      Note that since pseudo handle shifts are
      defined by their action on homology, if $f_* = \vH_*$ for a pseudo handle shift $\vH$ 
      associated to $h$, then $f$ is also a
      pseudo handle shift associate to $h$.
    \end{exs}
    
  \begin{thm}[Pseudo handle shifts are unapproachable]
    \label{thm.pseudounapproachable}  
    A pseudo handle shift $\vH$ dual to a separating curve $\gamma$ is not
    a limit of compactly supported mapping classes.
  \end{thm}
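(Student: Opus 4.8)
The plan is to show that the action on homology of a pseudo handle shift $\vH$ dual to $\gamma$ is incompatible with being approximated by compactly supported classes. The key invariant is the behaviour of the handle shift part on the "infinite" summand $H_1(\supp(h))$, which carries infinitely many hyperbolic pairs, together with the fact that a compactly supported class acts as the identity on all but finitely many of them.

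First I would set up coordinates for $H_1(\Sigma)$. Fix a principal exhaustion $\{P_i\}$ of $\Sigma$ adapted to $\gamma$ and to $\supp(h)$, and use Lemma \ref{decomp} to write $H_1(\Sigma) = H_1(P_i) \oplus M_i'$ compatibly with the splitting $H_1(\Sigma) \cong L \oplus \langle\gamma\rangle \oplus H_1(\supp(h)) \oplus R$. Inside $H_1(\supp(h))$ choose a standard symplectic basis $\{a_k, b_k\}_{k\in\ZZ}$ on which the handle shift $h$ acts by $h_*(a_k) = a_{k+1}$, $h_*(b_k)=b_{k+1}$ (this is the homological effect of shifting the handles). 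Because $\vH$ is a pseudo handle shift, $\vH_*$ agrees with $h_*$ on this whole summand, and it preserves $[\gamma]$; conditions (4)–(5) say $\vH_*$ does not mix the $A$- and $B$-sides into the handle summand.

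Next I would extract the obstruction. Suppose for contradiction that $f_n \to \vH$ with each $f_n$ compactly supported. Convergence in $\Mod(\Sigma)$ means that for every curve $c$ (equivalently, for every class in a fixed finite set of $H_1$ generators) we have $f_n(c) = \vH(c)$ for all large $n$; in particular $(f_n)_* = \vH_*$ on any fixed finitely generated subgroup once $n$ is large. Now pick an index so that $a_0, b_0 \in H_1(\supp(h)) \subset H_1(P_i)$ for the chosen exhaustion. For large $n$, $(f_n)_*(a_0) = \vH_*(a_0) = a_1$ and $(f_n)_*(b_0) = b_1$. On the other hand, $f_n$ is compactly supported, hence supported in some $P_{j(n)}$, and acts as the identity on $H_1(\Sigma \setminus P_{j(n)})$ — and crucially, by Lemma \ref{decomp} the complementary summand $M_{j(n)}'$ containing all $a_k, b_k$ with $|k|$ large is $(f_n)_*$-invariant, with $(f_n)_*$ acting trivially on the part supported off $P_{j(n)}$. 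The contradiction I want is an algebraic incompatibility: the automorphism $\vH_*$, restricted to $\vspan\{a_k,b_k : k\in\ZZ\}$, is the bilateral shift, which is a symplectic automorphism of infinite "translation length" — it fixes \emph{no} nonzero vector, and more usefully no finitely supported $f_n$ can reproduce it on a growing nested family of finite blocks while being the identity on the complement, because the shift sends the "last" basis vector of block $P_{j(n)}$ to a vector outside that block. Concretely: choose $k$ large enough (depending on $n$) that $a_k$ lies in the off-$P_{j(n)}$ summand; then $(f_n)_*(a_k) = a_k$, but if $a_{k-1}$ still lies in $H_1(P_{j(n)})$ we would need $(f_n)_*(a_{k-1}) = \vH_*(a_{k-1}) = a_k$, forcing $(f_n)_*$ to send two independent vectors $a_{k-1}$ and a suitable preimage of $a_k$ to the same place or else violate injectivity; running this at the "boundary block" of $P_{j(n)}$ yields the contradiction. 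I would phrase this cleanly via a \emph{homological translation number} or via the observation that $\vH_*$ has no eigenvalue $1$ on the handle summand while a product of $(f_n)_*$ and its "tail identity" must.

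The main obstacle will be making the last step rigorous without hand-waving about "blocks": the cleanest route is probably to define, for a class $x\in H_1(\Sigma)$, its \emph{support level} $\ell(x) = \min\{i : x \in H_1(P_i)\}$ (well-defined by Lemma \ref{decomp}), observe that $a_k, b_k$ have $\ell$ tending to infinity with $|k|$, that $(f_n)_*$ changes $\ell$ by at most a constant depending only on $j(n)$ (it fixes everything of level $> j(n)$), whereas $\vH_*$ satisfies $\ell(\vH_*(a_k)) = \ell(a_{k+1})$, so along the sequence $a_k$ the map $\vH_*$ strictly and unboundedly increases $\ell$ relative to a fixed level — something $(f_n)_*$, which is eventually the identity above level $j(n)$, cannot match once we take $k$ with $\ell(a_k) > j(n)$ but $\ell(a_{k-1}) \le j(n)$. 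That single index $k$ gives $(f_n)_*(a_k) = a_k$ yet $(f_n)_*(a_{k-1})$ must equal $a_k = \vH_*(a_{k-1})$, contradicting that $(f_n)_*$ is injective (as $a_{k-1} \ne a_k$ and nothing else in the domain maps to $a_k$). The remaining care is checking that $\supp(h)$, the handles, and the exhaustion can be arranged so that the $\ell$-values of $a_k, b_k$ genuinely go to infinity — which holds because a single embedded copy of $\Lambda$ meets every compact set in only finitely many handles — and that conditions (4)–(5) prevent the $A$- and $B$-sides from "absorbing" the shift; I would isolate this as a short lemma before the contradiction argument.
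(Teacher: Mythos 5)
Your contradiction hinges on the equality $(f_n)_*(a_{k-1}) = \vH_*(a_{k-1}) = a_k$ at the index $k$ pinned to the edge of the support of $f_n$ (you choose $k$ with $\ell(a_k) > j(n)$ and $\ell(a_{k-1}) \le j(n)$), and this step is not justified. Convergence in $\Mod(\Sigma)$ only guarantees that $f_n$ agrees with $\vH$ on compact sets (or finitely generated subgroups of $H_1$) fixed \emph{before} letting $n$ grow; since your $k$ depends on $n$, the class $a_{k-1}$ sits near $\partial P_{j(n)}$, a region where $f_n$ is under no obligation to agree with $\vH$. (A smaller inaccuracy: $(f_n)_*$ does not ``fix everything of level $>j(n)$''; it fixes classes supported off $P_{j(n)}$ and preserves $H_1(P_{j(n)})$, which is what you actually need for $(f_n)_*(a_k)=a_k$.) Without the boundary equality the injectivity contradiction evaporates.

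The gap is not cosmetic, because as written your argument uses only two facts -- that $\vH_*$ acts as the bilateral shift on the handle classes and that each $f_n$ has compact support -- and never invokes conditions (3)--(5) of a pseudo handle shift in the actual contradiction (you defer them to a future lemma). But those two facts also hold for the paper's own examples of infinite products of handle shifts that \emph{are} limits of compactly supported classes (two commuting handle shifts with opposite dynamics on a two-ended surface, and the boundary leaf shift): there the approximants $f_n$ reroute the shifted handle classes around near the edge of their support instead of continuing the shift, which is precisely the behavior your unjustified step rules out. So the approach cannot succeed in this pointwise form. The paper's proof instead makes a global rank count on $H_1(P_k)$ for $P_k$ containing $\supp(\vH_n)$: agreement with $h_*$ on the region fixed in advance forces the images of $\alpha_j,\beta_j$ into the $\Sigma_+$-side summand, properties (3)--(5) together with the choice of $i$ kill the matrix blocks that could carry homology back across $\gamma$, and then the $\Sigma_+$-side rows must support $r+2$ columns of an invertible matrix in only $r$ dimensions, a contradiction. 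Some counting argument of this kind, genuinely using the pseudo-handle-shift conditions to block ``recycling'' across $\gamma$, is what your proposal is missing.
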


  \begin{proof}
    Let $h$ be the associated handle shift dual to $\gamma$.  Let
    $\epsilon_-$ and $\epsilon_+$ be the ends of $\Sigma$ corresponding to
    the repelling and attracting ends of $h$, respectively, and let
    $\Sigma_-$ and $\Sigma_+$ be the complementary components of $\Sigma -
    \gamma$ containing $\epsilon_-$ and $\epsilon_+$, respectively. Choose
    some principal exhaustion $\{P_i\}$ of $\Sigma$, and let  $\Sigma_-^i
    =(\Sigma - P_i) \cap \Sigma_-$ and $\Sigma_+^i =(\Sigma - P_i) \cap
    \Sigma_+$.
    
    The curve $\gamma$ partitions the space of ends into two closed
    subspaces $E_-$ and $E_+$ of which $\Sigma_-$ and $\Sigma_+$ are
    neighborhoods, respectively.  The subsurfaces $\Sigma_-^i$ and
    $\Sigma_+^i$ are also neighborhoods of $E_-$ and $E_+$. Since $\vH$ is
    pure, $\vH(\Sigma_-^i)$ and $\vH(\Sigma_+^i)$ are also neighborhoods of
    $E_-$ and $E_+$.  Since $E_-$ and $E_+$ are disjoint, the intersection
    of the closures of $\vH(\Sigma_\pm^i)$ and $\Sigma_\mp$ is compact, and
    since the $\Sigma_\pm^i$ are nested and have empty intersection, we may
    take $i$ large enough so that the term $P_i$ in our principal
    exhaustion contains $\gamma$ and satisfies $\vH(\Sigma_\pm^i) \cap
    \Sigma_\mp$ is empty.
   
    The handle shift $h$ is supported on a strip $\vS$ with equally spaced
    handles and standard basis $\{\alpha_p, \beta_p\}_{k \in \ZZ}$ of
    $H_1(\vS)$ so that $h_*(\alpha_p) = \alpha_{p+1}$ and
    $h_*(\beta_p)=\beta_{p+1}$. We choose once and for all curves in $\vS$
    representing these classes.  After reindexing the $\alpha_p$ and
    $\beta_p$ by translating $p$, we assume that $\alpha_1$ and $\beta_1$
    lie in $\Sigma_-^i$. Since $\alpha_p$ and $\beta_p$ tend to
    $\epsilon_+$, there is some $j > 1$ such that $\alpha_j$ and $\beta_j$
    lie in $\Sigma_+^i$.

    Suppose that $\vH$ is a limit of compactly supported $\vH_n$.
    Pick $n$ large enough so that $\vH_n$ agrees with $\vH$ on $P_i$ and so
    that $\vH_{n*}$ agrees with $h_*$ on both $H_1(P_i)$ and $\langle
    \alpha_1, \beta_1, \ldots, \alpha_j, \beta_j \rangle$. Let $P_k$ be
    some term in the exhaustion with $k \geq i$ that contains the support
    of $\vH_n$.

    We have a direct sum decomposition
    \[
    H_1(P_k) \cong \ZZ^{\ell} \oplus \ZZ^{2j} \oplus \ZZ^r
    \]
    where $\ZZ^{\ell}$ is a subgroup of $H_1(\Sigma_-) \oplus \langle
    \gamma \rangle$, $\ZZ^{2j} = \langle \alpha_1, \beta_1, \ldots,
    \alpha_j, \beta_j \rangle$, and $\ZZ^r$ is a subgroup of
    $H_1(\Sigma_+)$. Picking a basis $\langle x_1, \ldots, x_{\ell},
    \alpha_1, \beta_1, \ldots, \alpha_j, \beta_j, y_1, \ldots, y_{r}
    \rangle$ for $H_1(P_k)$ compatible with this decomposition, we see that
    $\vH_{n*}$ has a block decomposition:
    \[
      \vH_{n*}=
      \begin{blockarray}{cccccc}
      &	\ell & 2j-2 & 2	& r \\
      \begin{block}{c(ccccc)}
      \ell & * & \mathbf{0} & \mathbf{0} & Y \\
      2	   & * & \mathbf{0} & \mathbf{0} & Z \\
      2j-2 & * & I	    & \mathbf{0} & * \\
      r	   & X & \mathbf{0} & A		 & B \\
      \end{block}
      \end{blockarray}	
    \]
    By properties (4) and (5) of a pseudo handle shift, and our choice of
    $i$, the blocks $X$, $Y$, and $Z$ are all zero.  So the matrix is:

    \[
    \vH_{n*}=
    \begin{blockarray}{cccccc}
    & \ell &2j-2 & 2 & r \\
    \begin{block}{c(ccccc)}
    \ell  & * & \mathbf{0} & \mathbf{0} & \mathbf{0} \\
    2	  & * & \mathbf{0} & \mathbf{0} & \mathbf{0} \\
    2j-2  & * & I	   & \mathbf{0} & *          \\
    r	  & \mathbf{0} & \mathbf{0} & A & B          \\
    \end{block}
    \end{blockarray}	
    \]
    This matrix is column equivalent to:
    \[
    \vH_{n*}=
    \begin{blockarray}{cccccc}
    & \ell & 2j-2 & 2 & r \\
    \begin{block}{c(ccccc)}
    \ell & *          & \mathbf{0} & \mathbf{0} & \mathbf{0} \\
    2	 & *          & \mathbf{0} & \mathbf{0} & \mathbf{0} \\
    2j-2 & *          & I	   & \mathbf{0} & \mathbf{0} \\
    r	 & \mathbf{0} & \mathbf{0} & A          & B          \\
    \end{block}
    \end{blockarray}	
    \]
    But the matrix $[A \ B]$ is an $r \times (r+2)$ matrix, and so its
    Jordan form cannot have a pivot in every column.  So the matrix for
    $\vH_{n*}$ is equivalent to a matrix with a zero column. But $\vH_{n*}$
    is an isomorphism, and this contradiction completes the proof. \qedhere

  \end{proof}

  %%%%%%%%%%%%%%%%%%%%%%%%%%%%%%%%%%%%%%%%%%%
  % Proof of compactly carried Torelli theorem
  %%%%%%%%%%%%%%%%%%%%%%%%%%%%%%%%%%%%%%%%%%%

  We are now ready to prove Theorem \ref{compact_torelli}.

  \begin{proof}[Proof of Theorem \ref{compact_torelli}]
    We will first show that $\vI(\Sigma) < \ol{\PMod_c(\Sigma)}$. By
    \cite[Theorem 1]{PV}, we only need to consider the case when $\Sigma$
    has at least two ends accumulated by genus.  We observe that
    $\vI(\Sigma)<\PMod(\Sigma)$.  Let $g$ be in $\PMod(\Sigma)$ so that $g$
    is not a limit of compactly supported mapping classes. We show that $g$
    is not in $\vI(\Sigma)$.

    By Theorem 3 and Corollary 4 from \cite{APM}, $g$ can be written $g =
    fk^{-1}$ where $f$ is a limit of compactly supported classes and $k$ is
    a product of pairwise commuting handle shifts $h_i$. The handle shift
    $h_i$ has the property that the support of $h_i$ is disjoint from the
    dual curve $\gamma_j$ for $h_j$ whenever $i \neq j$. Thus, for any $i$,
    $k$ is a pseudo-handle shift dual to $\gamma_i$ associated to $h_i$
    (see Examples after the definition of pseudo handle shifts). If $g$
    were in Torelli, then $f_*=g_*k_* = k_*$, which implies $f$ is a pseudo
    handle shift, but this violates Theorem \ref{thm.pseudounapproachable}.
    This shows \[ \vI(\Sigma) < \ol{\PMod_c(\Sigma)}. \]

    If $\phi_n$ is a sequence in $\vI_c(\Sigma)$ that converges to $\phi$,
    then $\phi$ lies in $\vI(\Sigma)$, since $\phi_n(\alpha)$ eventually
    agrees with $\phi(\alpha)$ for any given simple closed curve $\alpha$.
    So

    \[
    \overline{\vI_c(\Sigma)} < \vI(\Sigma).
    \]

    For the other containment, let $\phi$ be an element of $\vI(\Sigma)$
    and let $\{\psi_n\}$ be a sequence in $\PMod_c(\Sigma)$ converging to
    $\phi$.  We would like to convert $\psi_n$ into a sequence of compactly
    supported $\phi_n$ in $\vI(\Sigma)$ converging to $\phi$. The idea is
    that the homology classes affected by $\psi_n$ must move further and
    further away from a given basepoint, and so we can precompose the
    $\psi_n$ with a mapping class supported far from the basepoint to
    produce the desired $\phi_n$.
    
    Fix a principal exhaustion $\{P_i\}$ of $\Sigma$. For each $i$, pick a
    $j >i$ such that $P_j$ contains $\phi^{-1}(P_i)$. Pick an $N$ large
    enough so that, for all $n \geq N$, the map $\psi_{n}$ has a
    representative that agrees with a fixed representative of $\phi$ on
    $P_j$. Note that $\psi_{n*}$ agrees with $\phi_*$ on $H_1(P_j)$. Pick a
    $k>j$ such that $P_k$ contains the support of $\psi_n$.

    By Lemma \ref{decomp}, we have $H_1(P_k) \cong H_1(P_i) \oplus Q \oplus
    R$ for some $Q$ a subgroup of $H_1(P_j-P_i)$ and $R$ a subgroup of
    $H_1(P_k-P_j)$. Let $\alpha$ be element of $H_1(P_k)$ and write $\alpha
    = \gamma + \mu + \nu$ where $\gamma$, $\mu$, and $\nu$ are in
    $H_1(P_i)$, $Q$, and $R$, respectively. So $\psi_{n*}(\alpha) = \gamma
    + \mu + \psi_{n*}(\nu)$.

    The class $\nu$ is represented by a $1$--manifold $\vN$ in $P_k - P_j$.
    By our choice of $j$ and $n$, the $1$--manifold $\psi_n(\vN)$ is
    disjoint from $P_i$. So $\psi_{n*}(\nu)$ is in $Q\oplus R$. Therefore
    $\psi_{n*} : H_1(P_k) \to H_1(P_k)$ may be represented by a square
    matrix
    \[
    A =
    \left[
    \begin{array}{cccc}
    I 	       & \mathbf{0} \\
    \mathbf{0} & B			
    \end{array}
    \right]
    \]
    where $I$ is the identity on $H_1(P_i)$ and $B$ is a square matrix.
    Since $A$ is the induced map on homology associated to a homeomorphism
    of $P_k$, it is invertible and respects the intersection form, and so
    the same is true of $B$.
    
    We claim that the matrix $B$ is represented by a homeomorphism
    $F':\overline{P_k - P_i} \to \overline{P_k - P_i}$ that is the identity
    on $\partial P_i \cap \overline{P_k -P_i}$. To see this, note that $B$
    preserves the intersection form and the homology classes of the
    boundary components of each component of $\overline{P_k - P_i}$.  Let
    $X$ be the surface obtained from $\overline{P_k - P_i}$ by capping off
    all the boundary components with disks.  The homology of $X$ is a
    quotient of that of $\overline{P_k - P_i}$, and $B$ induces an
    automorphism of the homology of $X$ that preserves the intersection
    form.  There is a homeomorphism $X \to X$ inducing this automorphism
    that preserves each component of $X$, by Burkhardt's theorem
    \cite{burkhardt}.  By an isotopy, we may assume that Burkhardt's
    homeomorphism of $X$ fixes, point-wise, small disks around the centers
    of the disks we added to construct $X$.  Restricting this to
    $\overline{P_k -P_i}$ is the desired $F'$.

    We extend $F'$ by the identity to all of $\Sigma$ and call the result
    $F$.

    Now consider the homeomorphism $\phi_n = \psi_n \circ F^{-1}$. By the
    construction of $F$, this homeomorphism acts trivially on the homology
    of $\Sigma$, and agrees with $\psi_n$ on $P_i$.

    This completes the proof. \qedhere
  \end{proof}
    
  We finish this section with the alternate proof of the fact that
  $\vI(\Sigma) < \ol{\PMod_c(\Sigma)}$, as suggested by the referee.
  Suppose, for contradiction, that there is $f\in \vI(\Sigma)$ which may
  not be expressed as the limit of a sequence of mapping classes with
  compact support. A consequence of \cite[Theorem 4.5]{APM} is that there
  is a separating curve $\gamma \subset \Sigma$ such that $\gamma$ and
  $f(\gamma)$ have different topological types in every compact subsurface
  of $\Sigma$ containing them. In particular, they induce different
  splittings of $H_1(\Sigma,\mathbb{Z})$, which contradicts the fact that
  $f$ acts trivially on homology. 

\section{Abstract commensurators of the Torelli group}

  In this section we prove Theorem \ref{thm.commensurator}.  As in the
  statement of the theorem, throughout this section we will assume that the
  surface $\Sigma$ has empty boundary.  As mentioned in the introduction,
  the first step of the argument consists of proving that an element of
  $\Comm \vI (\Sigma)$ induces a simplicial automorphism of a combinatorial
  object associated to $\Sigma$, called the {\em Torelli complex},
  introduced by  Brendle--Margalit in \cite{BM}.

  \subsection{Torelli complex} 

  Recall that the \emph{curve complex} of $\Sigma$ is the
  (infinite-dimensional) simplicial complex whose vertex set is the set of
  isotopy classes of curves in $\Sigma$, and where a collection of vertices
  spans a simplex if and only if the corresponding curves are pairwise
  disjoint.  The curve complex was used by Ivanov \cite{Ivanov}, Korkmaz
  \cite{Korkmaz}, and Luo \cite{Luo} to prove that, for all but a few
  finite--type surfaces $\Sigma$,
  \[
  \Comm \Mod(\Sigma) \cong \Aut \Mod(\Sigma) \cong \Mod(\Sigma).
  \]
  Subsequently, Bavard--Dowdall--Rafi \cite{BDR} established the analogous
  result for {\em every} infinite--type surface. In a similar fashion,
  Farb-Ivanov \cite{FI}, Brendle--Margalit \cite{BM,BMadden,BMmeta}, and
  Kida \cite{Kida} proved that, for all but a few finite--type surfaces, 
  \[ \Comm \vI(\Sigma) \cong \Aut \vI(\Sigma) \cong \Mod(\Sigma). \]

  Here, we will adapt the ideas of Brendle--Margalit \cite{BM} to the
  infinite--type setting. Given an infinite--type surface $\Sigma$, we
  define its Torelli complex to be the (infinite-dimensional) simplicial
  complex whose vertex set is the set of isotopy classes of separating
  curves and bounding pairs in $\Sigma$, and where a collection of vertices
  spans a simplex if and only if the corresponding curves are pairwise
  disjoint. In order to relax notation, we will blur the distinction
  between vertices of $\vT(\Sigma)$ and the curves (or multicurves) they
  represent.
  
  The Torelli complex of a finite-type surface is connected \cite{FI}. As a
  consequence, the same holds for the Torelli complex of an infinite-type
  surface also. We record the following observation as a separate lemma, as
  we will need to make use of it later:

  \begin{lem}
    The Torelli complex $\vT(\Sigma)$ has infinite diameter if and only if
    $\Sigma$ has finite type. \label{lem.diameter}
  \end{lem}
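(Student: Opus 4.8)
The plan is to prove both implications separately, using the curve complex $\vC(\Sigma)$ as a comparison object. For the finite-type case, recall that the Torelli complex of a finite-type surface has infinite diameter: this follows from the corresponding statement for the curve complex together with the fact that, for finite-type surfaces, the Torelli complex is quasi-isometrically embedded in (or at least coarsely comparable to) the curve complex; alternatively one may cite directly the work of Farb--Ivanov \cite{FI} and Brendle--Margalit \cite{BM}, where the infinite diameter of $\vT(\Sigma)$ for finite-type $\Sigma$ of sufficient complexity is established. The edge cases of small complexity can be handled directly by exhibiting explicit sequences of separating curves whose pairwise distance grows.

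For the infinite-type direction, I would show that $\vT(\Sigma)$ has diameter at most $2$ (in fact, I expect diameter exactly $2$). The key point is that, given any two separating curves or bounding pairs $a, b$ representing vertices of $\vT(\Sigma)$, one can find a separating curve $c$ disjoint from both and nonisotopic to either. To see this, choose a compact subsurface $K \subset \Sigma$ containing representatives of both $a$ and $b$; since $\Sigma$ has infinite type, $\Sigma \setminus K$ has a component $Z$ of infinite type. One then takes $c$ to be a separating curve contained in a collar neighborhood of a suitable boundary component of $K$ lying on the side of $Z$ — more precisely, a curve cutting off an infinite-type piece of $\Sigma$ from the rest, arranged to be disjoint from $K$ hence from $a$ and $b$. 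Such a $c$ is a vertex of $\vT(\Sigma)$ (it is separating, and by choosing it to bound an infinite-type subsurface on both sides it is not homotopic into a neighborhood of a planar end), and it is disjoint from and distinct from $a$ and $b$. Hence $d_{\vT}(a,b) \le 2$.

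The main obstacle I anticipate is the bookkeeping in the infinite-type direction needed to guarantee that the connector curve $c$ is genuinely a vertex of $\vT(\Sigma)$ and genuinely nonisotopic to $a$ and $b$ — one must be slightly careful when $a$ or $b$ itself cuts off a planar or small piece, and when $\Sigma$ has very few ends, so that the complement of the compact piece $K$ still affords enough room to place $c$. This is handled by enlarging $K$ if necessary so that each of the (finitely many, up to the relevant isotopy) ``small'' complementary pieces of $a$ and of $b$ is absorbed into $K$, leaving the infinite-type complementary region in which $c$ is drawn. A second, minor subtlety is ensuring the lower bound (diameter $\ge 2$, i.e.\ $\vT(\Sigma)$ is not already of diameter $\le 1$): since any infinite-type $\Sigma$ admits two disjoint-from-a-common-curve but mutually intersecting separating curves, $\vT(\Sigma)$ is not a complete graph, so its diameter is exactly $2$; this is not needed for the statement but clarifies the picture.
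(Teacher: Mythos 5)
Your proposal is correct and follows essentially the same route as the paper's: for infinite type, the same observation that any two vertices are simultaneously disjoint from a separating curve cut off far away in the infinite-type complement gives diameter two, and for finite type the paper likewise reduces to the infinite diameter of the curve complex (adapting Luo's limiting argument to $\vT(\Sigma)$). The only caveat is that ``quasi-isometrically embedded'' is more than is needed or known: what your argument actually uses is the one-way Lipschitz comparison (an edge path in $\vT(\Sigma)$ yields a path of the same length in the curve complex through representative curves, so $\vT$-distance is bounded below by curve-complex distance), which, combined with the existence of separating curves at arbitrarily large curve-complex distance, already gives infinite diameter.
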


  \begin{proof}
    If $\Sigma$ has finite type, a slick limiting argument due to Feng Luo
    (see the comment after Proposition 4.6 of \cite{MM}) shows that the
    curve complex has infinite diameter. The obvious adaptation of this
    method to the case of the Torelli complex also implies that
    $\vT(\Sigma)$ has infinite diameter. 

    For the other direction, suppose $\Sigma$ has infinite type. Since
    curves are compact, given multicurves $\gamma, \delta \subset \Sigma$,
    we can find a separating curve $\eta \subset \Sigma$ which is disjoint
    from both $\gamma$ and $\delta$. In particular, $\vT(\Sigma)$ has
    diameter two. 
  \end{proof}

  \subsection{Automorphisms of the Torelli complex} 

  Denote by $\Aut \vT(\Sigma)$ the group of simplicial automorphisms of
  $\vT(\Sigma)$, and observe that there is a natural homomorphism
  $\Mod(\Sigma) \to \Aut \vT(\Sigma) $. We want to prove: 

  \begin{thm}\label{thm.complex}
    Let $\Sigma$ be an infinite--type surface without boundary. The natural
    homomorphism $\Mod(\Sigma) \to \Aut \vT (\Sigma)$ is an isomorphism. 
  \end{thm}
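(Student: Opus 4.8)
The plan is to follow the strategy that Brendle--Margalit used for finite-type surfaces, but to feed the infinite-type geometry into an exhaustion argument in the style of Ivanov. The homomorphism $\Mod(\Sigma) \to \Aut\vT(\Sigma)$ is injective because a mapping class fixing every separating curve and every bounding pair is the identity (this is standard: separating curves alone suffice to change any nontrivial mapping class, via the Alexander method). So the entire content is surjectivity: given $\varphi \in \Aut\vT(\Sigma)$, produce a homeomorphism of $\Sigma$ inducing it.

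First I would fix a principal exhaustion $\{P_i\}$ of $\Sigma$ (which exists by the definitions of Section \ref{sec.defs}). The key local step is to understand how $\varphi$ acts on the sub-Torelli-complex "supported in $P_i$" — more precisely, on the vertices of $\vT(\Sigma)$ represented by separating curves and bounding pairs lying in $P_i$. I would want to show that $\varphi$ (perhaps after composing with an element of $\Mod(\Sigma)$) carries this sub-complex to the corresponding sub-complex for some $P_j$, and that the induced map is realized by a homeomorphism $P_i \to P_j$ lying in the appropriate relative mapping class group. For this one invokes the finite-type results: Brendle--Margalit \cite{BM} for closed surfaces and Kida \cite{Kida} for finite-type surfaces with punctures give $\Aut\vT(\Sigma_{g,p}) \cong \Mod(\Sigma_{g,p})$, and one needs a version of this for surfaces with boundary, or else one caps the boundary components of $P_i$ with once-punctured disks and works with the resulting punctured surface, tracking the distinguished punctures. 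The mechanism for recognizing which vertices "live in $P_i$" is combinatorial: a separating curve $\gamma$ is disjoint from $\partial P_i$ and lies on the $P_i$-side iff a suitable link condition in $\vT(\Sigma)$ holds, and $\varphi$ preserves such conditions since it is simplicial; one must first show $\varphi$ sends (the multicurve represented by) $\partial P_i$ to another such separating multicurve bounding a principal domain, which is where the combinatorial characterization of principal domains — and Lemma \ref{Lem:Fanoni} / Lemma \ref{Reducing} — enter.

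Next I would run the Ivanov-style compatibility/limiting argument: having produced homeomorphisms $f_i : P_i \to Q_i \subset \Sigma$ realizing $\varphi$ on each piece, one checks that for $i < i'$ the restriction of $f_{i'}$ to $P_i$ agrees with $f_i$ up to isotopy relative to $\partial P_i$ (because both realize the same combinatorial map on the Torelli complex of $P_i$, and the finite-type rigidity is by isotopy), adjusts the $f_i$ inductively so that they genuinely agree on the nose on $P_i$, and takes the direct limit to get a homeomorphism $f : \Sigma \to \Sigma$. Finally one verifies $f_* = \varphi$ on all of $\vT(\Sigma)$: any vertex of $\vT(\Sigma)$ is a curve or bounding pair, hence compact, hence contained in some $P_i$, so it is handled by the corresponding $f_i$. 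This last verification is exactly the "slick" point that every simplex is compactly supported, mirroring the proof of Lemma \ref{lem.diameter}.

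The main obstacle I expect is the local step: proving that $\varphi$ respects the combinatorial data of a principal exhaustion — i.e., that it sends the boundary multicurve of a principal domain to the boundary multicurve of a principal domain, and sends the "inside" sub-complex to an "inside" sub-complex — and then correctly invoking the finite-type rigidity in the bounded-genus case with boundary/punctures and tracking the distinguished boundary data so the pieces match up across the exhaustion. Checking that the individual $f_i$ patch together compatibly (the inductive isotopy bookkeeping) is routine once the local step is in hand, as is the injectivity, so I would budget essentially all the effort on recognizing principal domains combinatorially and on the careful statement of the finite-type input.
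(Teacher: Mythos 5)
Your outline matches the architecture of the paper's proof (principal exhaustion, finite-type rigidity on each piece via capping boundary with once-punctured disks, compatible choices and a direct limit), but the step you yourself flag as the main obstacle is exactly the content of the paper's argument, and you leave it unfilled: you never give the combinatorial invariant that lets a simplicial automorphism $\phi$ recognize ``the inside of $P_i$.'' Saying ``a suitable link condition'' and pointing at Lemma \ref{Lem:Fanoni} / Lemma \ref{Reducing} is not an argument, and the naive hope that $\phi(\partial P_i)$ is seen to bound a principal domain is not what one proves. The paper's mechanism is the notion of \emph{sides} of a simplex (an equivalence relation on $\LK(\sigma)$), together with two facts: Lemma \ref{lemma.2sided}, classifying which vertices are $1$-sided versus $2$-sided (type $X$ separating curves and genus bounding pairs are the $2$-sided ones), and Lemma \ref{lem.diameter}, the dichotomy that $\vT$ of a finite-type piece has infinite diameter while $\vT$ of an infinite-type piece has diameter two. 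Since $\partial P_i$ consists of type $X$ curves, the subcomplex $\vP_i$ is the \emph{unique} side of $\sigma_i$ of infinite diameter; a simplicial automorphism preserves sides and their diameters, so $\phi(\sigma_i)$ again has a unique infinite-diameter side $\vQ_i$, its vertices are forced to be type $X$ (genus bounding pairs are ruled out because sides are connected), and $\vQ_i$ therefore corresponds to a finite-type subsurface $Q_i$ with $\vQ_i \cong \vT(Q_i^\circ)$. Without some such invariant, your ``local step'' does not get off the ground.

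A second, smaller gap: after building the $f_i : P_i \to Q_i$ and taking the direct limit, you check that every vertex of $\vT(\Sigma)$ lies in some $P_i$, but you do not check that the resulting map is a homeomorphism \emph{onto} $\Sigma$, i.e.\ that $\bigcup Q_i = \Sigma$. The $Q_i$ are only defined up to isotopy, so this is not automatic; the paper handles it by Lemma \ref{Lem:Fanoni2} (Fanoni's non-accumulation criterion), realizing the curves of $\bigcup \vQ_i$ by representatives that do not accumulate in any compact set, whence the nested union $\bigcup Q_i$ is open and exhausts $\Sigma$. Your injectivity sketch is essentially fine (the paper fixes non-separating curves using bounding pairs plus a separating curve, then applies the infinite-type Alexander method), and the finite-type input you name (Kida, with Korkmaz for punctured low-genus cases, applied to $P_i^\circ$ with complexity bounded below along the exhaustion) is the same as the paper's; but as written the proposal is a plan whose decisive steps are missing rather than a proof.
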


  As noted above, the finite--type case is due to Brendle--Margalit
  \cite{BM,BMadden,BMmeta} and Kida \cite{Kida}.  Indeed,  the notion of
  \emph{sides} which is used in this section is adapted from arguments that
  may be found in Brendle-Margalit \cite{BM}, and which find their way back
  to ideas of Ivanov \cite{Ivanov}.
  
  %  Building upon these results, we will prove Theorem
  %  \ref{thm.complex} using an inductive argument similar to that of
  %  Bavard--Dowdall--Rafi \cite{BDR}. 
  
  \subsection*{Sides}
  Recall that the {{\em link} of a vertex $v$ of a simplicial complex $X$
  is the set of all vertices of $X$ that span an edge with $v$.  In
  particular, $v$ is not an element of its link.
  For any finite-dimensional simplex $\sigma$ let $\LK(\sigma)$ be the
  intersection of the links of each of the vertices in $\sigma$.  We say
  that two vertices $\alpha, \beta$ in $\LK(\sigma)$ lie on the same
  \emph{side} of $\sigma$ if there exists a vertex $\gamma$ in $\LK(\sigma)$
  that fails to span an edge with both $\alpha$ and $\beta$, that is, if
  there exists a curve in $\LK(\sigma)$ that intersects both $\alpha$ and
  $\beta$.  Observe that ``being on the same side'' defines an equivalence
  relation $\sim_\sigma$ on $\LK(\sigma)$, that is, the \emph{sides} of
  $\sigma$ are the equivalence classes of $\sim_\sigma$ in $\LK(\sigma)$.

  In particular, we may consider the sides of a vertex of $\vT(\Sigma)$. We
  say that $\gamma$ in $\vT(\Sigma)$ is $k$-sided if there are $k$
  equivalence classes with respect to $\sim_\gamma$. As we shall see, $k$
  is in $\{1,2\}$. 

  For any vertex $\gamma$ of $\vT(\Sigma)$ there exist two subsurfaces $R,
  L \subset \Sigma$ obtained by cutting $\Sigma$ along $\gamma$ such that
  $\gamma$ is isotopic to the boundary components of both $R$ and $L$.
  Suppose $R$ is of finite type. We call $\gamma$ a \emph{pants curve} if
  $\gamma$ is a separating curve and $R \cong \Sigma_{0,2}^1$, a sphere
  with two punctures and one boundary component.  We call $\gamma$ a
  $\emph{genus curve}$ if $\gamma$ is a separating curve and $R \cong
  \Sigma_{1,0}^1$, a torus with one boundary component. If $\gamma$ is any
  other type of separating curve then we say it is type $X$.

  If $\gamma$ is a bounding pair and one of the associated subsurfaces of
  $\Sigma$ is homeomorphic to $\Sigma_{1,0}^2$ then we call it a
  \emph{genus bounding pair}.

  \begin{lem}\label{lemma.2sided}
    A vertex $\gamma$ in $\vT(\Sigma)$ is $2$-sided if and only if it is type
    $X$ or it is a genus bounding pair. Otherwise, $\gamma$ is $1$-sided. 
  \end{lem}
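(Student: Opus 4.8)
The plan is to analyze, for each of the possible types of vertex $\gamma$ of $\vT(\Sigma)$, how many equivalence classes the relation $\sim_\gamma$ has on $\LK(\gamma)$. The key structural fact is that when we cut $\Sigma$ along (a representative of) $\gamma$, we obtain two subsurfaces $L$ and $R$, and any curve or bounding pair disjoint from $\gamma$ may be isotoped into $L$ or into $R$ — except that a bounding pair could have one curve in each piece, and this borderline case is exactly what the genus bounding pair condition is designed to control. So the first step is to observe that for a vertex $\delta \in \LK(\gamma)$ whose support lies entirely in $L$ (or entirely in $R$), the ``side'' it determines depends only on which of $L$, $R$ it is contained in: two such vertices on the same piece can be linked by a third curve in that piece that crosses both (this uses that each of $L$, $R$, being a component of an infinite-type surface cut along one curve, has enough complexity — it is infinite-type or at least not too small — to contain a separating curve crossing any two given curves in it), so they lie on the same side; conversely a vertex in $L$ and a vertex in $R$ cannot be crossed by a single curve disjoint from $\gamma$, since such a curve also lies in $L$ or in $R$. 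Hence generically $\gamma$ has exactly two sides, one per piece.

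The second step is to rule out the degenerate pieces. If $\gamma$ is a pants curve, then $R \cong \Sigma_{0,2}^1$: there are no curves (in the sense of the paper, excluding boundary-parallel and puncture-parallel curves) and no bounding pairs supported in $R$, so every vertex of $\LK(\gamma)$ is supported in $L$, giving only one side. Similarly if $\gamma$ is a genus curve, $R \cong \Sigma_{1,0}^1$ contains no separating curve and no bounding pair (every curve in a once-holed torus is non-separating and no two of them cobound), so again $\LK(\gamma)$ lives entirely in $L$ and $\gamma$ is $1$-sided. The same reasoning handles a genus bounding pair $\gamma = \{a,b\}$ for which one complementary piece is $\Sigma_{1,0}^2$: that piece carries no admissible vertex of $\vT(\Sigma)$, so one would expect $1$-sidedness — but here one must be careful, because the \emph{other} complementary piece is infinite-type and, crucially, a bounding pair disjoint from $\gamma$ might consist of one curve from the $\Sigma_{1,0}^2$-side together with... no: since $\Sigma_{1,0}^2$ contains no curve that together with an outside curve forms a bounding pair disjoint from $\{a,b\}$ (the only curves in $\Sigma_{1,0}^2$ disjoint from its boundary are boundary-parallel or the non-separating curve, and the non-separating curve together with an outside curve does not cobound while being disjoint from both $a$ and $b$), every vertex linked to $\gamma$ is supported in the big piece, so $\gamma$ is $1$-sided. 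Wait — that contradicts the statement, so the correct reading is that a genus bounding pair is $2$-sided: I would instead argue that a genus bounding pair $\{a,b\}$ cutting off $\Sigma_{1,0}^2$ on one side has curves linked to it that \emph{use the separating curve of the once-holed-torus-with-two-holes region running parallel between $a$ and $b$}, i.e. the curve $c$ in $\Sigma_{1,0}^2$ homologous to $a$ (hence to $b$): this $c$ is disjoint from $\gamma$ but lies in neither the ``$L$-side'' nor the ``$R$-side'' determined by the big region, producing a genuinely distinct second side. So for genus bounding pairs one piece being $\Sigma_{1,0}^2$ forces exactly $2$ sides because that small piece, while containing no vertex of $\vT$, contains a \emph{linking} curve for the pair; whereas pants/genus \emph{separating} curves have $R$ too small even for that.

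The third step is to treat all remaining vertices — type $X$ separating curves, and bounding pairs that are not genus bounding pairs — and show each is $2$-sided. For a type $X$ curve both $L$ and $R$ are neither $\Sigma_{0,2}^1$ nor $\Sigma_{1,0}^1$, hence each contains an admissible vertex of $\vT(\Sigma)$ (a separating curve), and by the step-one dichotomy these give exactly two sides. For a bounding pair $\{a,b\}$ that is not a genus bounding pair, cutting along $a \cup b$ yields two subsurfaces neither of which is $\Sigma_{1,0}^2$; each then contains a separating curve of $\Sigma$ (one must check neither piece is a sphere with three holes or similar — if a piece were $\Sigma_{0,0}^3$ then $a$ and $b$ would cobound a pair of pants, which is allowed and does occur, so here the relevant linking curve is again supplied by a curve crossing from one cuff-region to another), and step one gives exactly two sides. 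I expect the \textbf{main obstacle} to be the careful case check at the boundary between ``$1$-sided'' and ``$2$-sided,'' namely pinning down exactly which small complementary pieces kill a side (those admitting neither a vertex of $\vT$ nor a linking curve: $\Sigma_{0,2}^1$ and $\Sigma_{1,0}^1$) versus which do not ($\Sigma_{1,0}^2$, which admits a linking curve though no vertex), and verifying the ``enough complexity to link any two curves'' claim uniformly for the large pieces — this is the adaptation of the Ivanov–Brendle–Margalit side argument that the paragraph before the lemma alludes to, and it is where one uses that $\Sigma$ is infinite-type so the non-degenerate pieces are all large.
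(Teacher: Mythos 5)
Your argument goes wrong at the point that matters most in this lemma: the behaviour of bounding pairs. In Step 1 you assert that every vertex of $\LK(\gamma)$ lies entirely in one of the two complementary pieces $L$, $R$. That is true when $\gamma$ is a \emph{separating curve} (a bounding pair with one curve in $R$ and one in $L$ would consist of curves whose classes lie in both halves of the homology splitting induced by $\gamma$, hence are null-homologous, contradicting non-separating), but it is false when $\gamma$ is itself a bounding pair $\{a,b\}$: there are bounding pairs $\gamma'=\{\delta_R,\delta_L\}$ with $\delta_R\subset R$, $\delta_L\subset L$, each homologous to $a$, and such a $\gamma'$ is disjoint from $\gamma$ while straddling both pieces. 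These straddling vertices are exactly the mechanism of the lemma: given any $\alpha\subset R$ and $\beta\subset L$ in $\LK(\gamma)$, one can choose $\gamma'$ so that $\delta_R$ crosses $\alpha$ and $\delta_L$ crosses $\beta$ (bounding pairs ``pass through'' each other), so $\alpha\sim_\gamma\beta$ and a non-genus bounding pair is $1$-sided. Your Step 3 concludes instead that such bounding pairs are $2$-sided, which contradicts the very statement you are proving; the missing idea is precisely this passing-through construction, which is the heart of the paper's proof (its Figure for the $1$-sided bounding pair case). You also omit the case where one piece is a once-punctured annulus $\Sigma_{0,1}^2$, which is $1$-sided for a different reason (no vertices at all on that side).

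Your treatment of the genus bounding pair is also built on two incorrect claims. First, $\Sigma_{1,0}^2$ \emph{does} contain a vertex of $\vT(\Sigma)$: the curve cutting off the handle is a separating curve of $\Sigma$ (a genus curve), and it is what populates the second side. Second, the ``linking curve $c$ homologous to $a$'' inside $\Sigma_{1,0}^2$ that you invoke does not help: the only simple closed curves in $\Sigma_{1,0}^2$ that are homologous in $\Sigma$ to $a$ (equivalently, cobound with $a$ or $b$) are the boundary-parallel ones (the map $H_1(\Sigma_{1,0}^2)\to H_1(\Sigma)$ is injective here, by intersection-number considerations), so no new side-producing vertex arises this way. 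The correct argument is the opposite of what you propose: because every curve in $\Sigma_{1,0}^2$ cobounding with a curve of $\gamma$ is isotopic to a curve of $\gamma$, there is \emph{no} straddling bounding pair in $\LK(\gamma)$; hence every vertex of $\LK(\gamma)$ lies in $R$ or in $L$, no vertex of the link can cross vertices on both sides, and since both sides carry vertices (the genus curve in $\Sigma_{1,0}^2$, and anything in $L$), the genus bounding pair is $2$-sided by the same argument as for type $X$ curves.
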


  \begin{proof}

    We first prove that if $\gamma$ has type $X$ then it has exactly two
    sides. Let $R$ and $L$ be the two subsurfaces of $\Sigma$ obtained by
    cutting along $\gamma$.  Let $\alpha, \beta \in \LK(\gamma)$.  If
    $\alpha \subset R$ and $\beta \subset L$, then any vertex of
    $\vT(\Sigma)$ that intersects both $\alpha$ and $\beta$ must also
    intersect $\gamma$. This implies that $\gamma$ has at least two sides.
    If $\alpha, \beta \subset R$ then there exists an element of the
    $\Mod(\Sigma)$-orbit of $\alpha$ that intersects both $\alpha$ and
    $\beta$ and is contained in $R$.  An identical argument holds for two
    vertices contained in $L$ and so it follows that $\gamma$ has exactly
    two sides.

    Now let $\gamma$ be a genus one separating curve or a pants curve.
    Define $L,R \subset \Sigma$ as above.  Observe that neither
    $\Sigma_{0,2}^1$ nor $\Sigma_{1,0}^1$ contains any non-peripheral
    separating curves or bounding pairs. Therefore $\LK(\gamma)$ does not
    contain any curves in $R$. As above, all vertices contained in $L$ are
    on the same side and so $\gamma$ is $1$-sided.

    We now move on to the case where $\gamma$ is a bounding pair.  We
    define $R$ and $L$ as above.  Assume that neither $R$ nor $L$ is
    homeomorphic to $\Sigma_{0,1}^2$  or $\Sigma_{1,0}^2$. Let $\alpha,
    \beta\in\LK(\gamma)$ be such that $\alpha \subset R$ and $\beta \subset
    L$.  As shown in Figure \ref{figure.1sidedBP}, there exists a bounding
    pair $\gamma' = \{ \delta_R, \delta_L \}$ such that:
    \begin{itemize}
    \item any pair of curves in $\gamma$ or $\gamma'$ forms a bounding pair,
    \item $\delta_R \subset R$ and $\delta_L \subset L$, and
    \item $\delta_R\cap\alpha\neq\emptyset$ and $\delta_L\cap\beta\neq\emptyset$.
    \end{itemize}
    \begin{figure}[t]
    \begin{center}
    \begin{tikzpicture}
    \node[inner sep=0pt] (1sidedBP) at (0,0)
            {\includegraphics[scale=0.3]{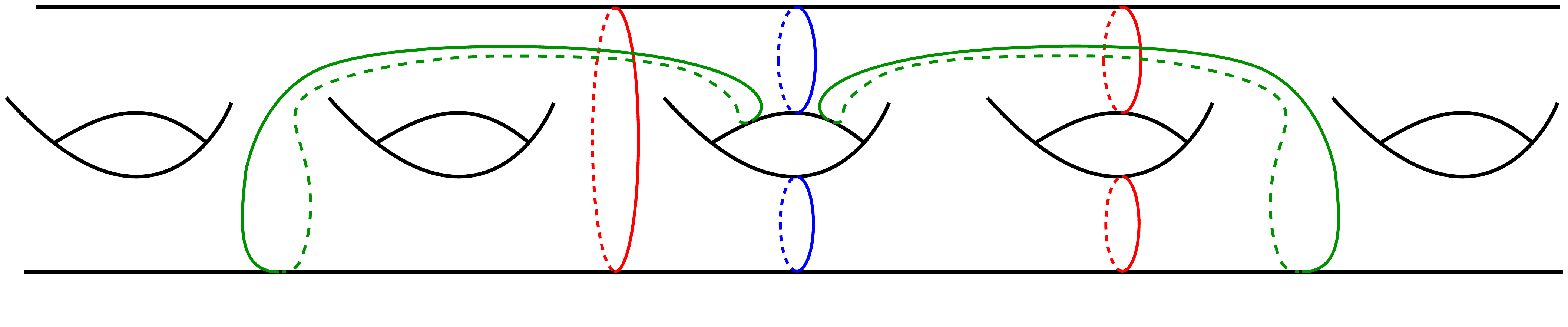}};
    \node at (0.1,-1.2) {\large $\gamma$};
    \node at (-1.4,-1.2) {\large $\beta$};
    \node at (2.85,-1.2) {\large $\alpha$};
    \node at (-4.1,-1.2) {\large $\delta_L$};
    \node at (4.35,-1.2) {\large $\delta_R$};
    \end{tikzpicture}
    \end{center}
    \caption{A general bounding pair $\gamma$ is $1$-sided. For any two
      vertices $\alpha,\beta$ in $\vT(\Sigma)$ adjacent to $\gamma$, we can
      find a bounding pair not adjacent to $\alpha$ and $\beta$ but
      adjacent to $\gamma$.  Informally, bounding pairs can ``pass
      through'' each other.}
    \label{figure.1sidedBP}
    \end{figure}

    That is, $\gamma'$ is in $\LK(\gamma)$ and there is no edge between
    $\gamma'$ and $\alpha$ or between $\gamma'$ and $\beta$.  It follows
    that $\gamma$ has exactly one side.

    If $\gamma$ is a genus bounding pair then no such $\gamma'$ exists.
    Indeed, every non-separating curve in $R$ that forms a bounding pair
    with a curve in $\gamma$ is also isotopic to a curve in $\gamma$.  By
    the same argument as for type $X$ vertices, we conclude that $\gamma$
    is $2$-sided.

    If $R$ is homeomorphic to $\Sigma_{0,1}^2$ then $\gamma$ is $1$-sided.
    Indeed, the only vertex of $\vT(\Sigma)$ contained in $R$ is $\gamma$
    and so all vertices of $\LK(\gamma)$ are contained in $L$.  This
    completes the proof. \qedhere 
  \end{proof}

  Let $\sigma$ be a finite-dimensional simplex of $\vT(\Sigma)$ consisting
  entirely of curves of type $X$. Using similar methods to the above proofs
  it is straightforward to show that the set of sides of $\sigma$ is in
  bijective correspondence with the subsurfaces of $\Sigma$ obtained by
  cutting $\Sigma$ along $\sigma$.

  The following lemma follows immediately from Lemma \ref{Lem:Fanoni}.

  \begin{lem} \label{Lem:Fanoni2}

    Let $\sigma$ be a simplex in $\mathcal{T}(\Sigma)$. Then there are
    curves realizing the homotopy classes in $\sigma$ which do not
    accumulate in any compact set of $\Sigma$ if and only if for every
    vertex $v \notin \sigma$, the set $\{w \in \sigma | \I(v, w) \ne 0\}$
    is finite.

  \end{lem}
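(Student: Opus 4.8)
The plan is to reduce the statement to Lemma \ref{Lem:Fanoni}. Write $\widehat\sigma$ for the set of isotopy classes of simple closed curves occurring either as a separating-curve vertex of $\sigma$ or as one of the two curves of a bounding-pair vertex of $\sigma$. Since $\sigma$ is a simplex of $\vT(\Sigma)$, any two of its vertices are realized disjointly, so $\widehat\sigma$ is a multicurve, and ``there are curves realizing the homotopy classes in $\sigma$ which do not accumulate in any compact set'' is, verbatim, the assertion that $\widehat\sigma$ admits such a realization. By Lemma \ref{Lem:Fanoni} this is equivalent to condition $(\star)$: for every curve $\alpha\notin\widehat\sigma$ the set $\{\beta\in\widehat\sigma\mid \I(\alpha,\beta)\neq 0\}$ is finite. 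It therefore suffices to show that $(\star)$ is equivalent to condition $(\dagger)$: for every vertex $v\notin\sigma$ of $\vT(\Sigma)$ the set $\{w\in\sigma\mid \I(v,w)\neq 0\}$ is finite. Both conditions encode the same geometric data; the content is in converting ``finitely many curves'' into ``finitely many vertices''.

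For $(\star)\Rightarrow(\dagger)$ I would fix $v\notin\sigma$, which is a separating curve or a bounding pair. Each curve $c$ of $v$ either lies in $\widehat\sigma$, in which case it is disjoint from every curve of $\widehat\sigma$, or satisfies $c\notin\widehat\sigma$, in which case $(\star)$ bounds the number of curves of $\widehat\sigma$ that $c$ crosses; hence $v$ crosses only finitely many curves of $\widehat\sigma$. It then remains to check that a single curve $\beta\in\widehat\sigma$ belongs to only finitely many vertices of $\sigma$. This is clear if $\beta$ is separating. If $\beta$ is nonseparating, the vertices of $\sigma$ containing it are bounding pairs $\{\beta,\beta'\}$ whose second curves $\beta'$ are pairwise disjoint; cutting $\Sigma$ along $\beta$ produces a connected surface with two boundary circles $\beta^\pm$, and each such $\beta'$ separates $\beta^+$ from $\beta^-$ (otherwise $\beta'$ alone would separate $\Sigma$), hence crosses any fixed compact arc joining $\beta^+$ to $\beta^-$; if there were infinitely many such $\beta'$ they would accumulate on that compact arc, contradicting the non-accumulation realization furnished by $(\star)$. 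So $\{w\in\sigma\mid\I(v,w)\neq 0\}$ is finite.

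For $(\dagger)\Rightarrow(\star)$ I would fix a curve $\alpha\notin\widehat\sigma$ and a principal exhaustion $\{P_i\}$ of $\Sigma$. As $\alpha$ is compact it lies in the interior of some $P_i$, whose boundary $\partial P_i=v_1\cup\cdots\cup v_m$ consists of separating curves. If $\beta\in\widehat\sigma$ satisfies $\I(\alpha,\beta)\neq 0$ then $\beta$ cannot be isotoped into a component of $\Sigma\setminus\partial P_i$ other than $P_i$, so either $\beta$ is isotopic into $P_i$ or $\I(v_j,\beta)\neq 0$ for some $j$. The curves of $\widehat\sigma$ that may be isotoped into $P_i$ form a multicurve in the finite-type surface $P_i$, hence are finite in number; and for each $j$, either $v_j$ is a vertex of $\sigma$ (and so is disjoint from all of $\widehat\sigma$) or $v_j\notin\sigma$, in which case $(\dagger)$ bounds the number of $w\in\sigma$ with $\I(v_j,w)\neq 0$, and hence the number of curves of $\widehat\sigma$ crossed by $v_j$, by a factor of at most two. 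Summing over $j$ shows $\{\beta\in\widehat\sigma\mid\I(\alpha,\beta)\neq 0\}$ is finite, giving $(\star)$.

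The only delicate point is the one isolated in the second paragraph: ruling out that a single curve is shared by infinitely many bounding-pair vertices of $\sigma$. This is exactly where the realization (non-accumulation) hypothesis, rather than the combinatorics of $\sigma$ alone, must be used; everything else is an unravelling of the definition of $\vT(\Sigma)$ together with the elementary fact that a multicurve on a finite-type surface has finitely many components.
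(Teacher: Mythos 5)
Your proof is correct and follows the same route the paper intends: the paper offers no argument beyond asserting that the statement ``follows immediately from Lemma \ref{Lem:Fanoni},'' and your write-up simply carries out that reduction, translating between vertices of $\sigma$ and the underlying multicurve. The one point of genuine content you add --- ruling out, via the arc-crossing/accumulation argument, that a single nonseparating curve lies in infinitely many bounding-pair vertices of $\sigma$ --- is exactly the detail the paper's ``immediately'' glosses over, and your handling of it is sound.
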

  
  \begin{proof}[Proof of Theorem \ref{thm.complex}]
    Let \[\Phi: \Mod(\Sigma) \to \Aut(\vT(\Sigma))\] be the natural
    homomorphism; that is, for $f$ in $\Mod(\Sigma)$, $\Phi(f)$ is the
    automorphism of $\vT(\Sigma)$ determined by the rule \[\Phi(f)(\gamma)
    = f(\gamma)\] for every separating curve or bounding pair $\gamma$. 

    First, we show that $\Phi$ is injective. To this end, suppose $\Phi(f)
    = {\rm Id}$.  Then we argue that $f(\gamma) = \gamma$ for every curve
    $\gamma$. Indeed, if $\gamma$ is separating, then  $\gamma$ is a vertex
    of $\vT(\Sigma)$, so $\Phi(f)(\gamma)=\gamma$ and we are done.  If
    $\gamma$ is non-separating, there is some curve $\gamma'$ such that
    $\gamma$ and $\gamma'$ form a bounding pair.  Because $\Phi(f)$ fixes
    the vertex corresponding to $\gamma\cup\gamma'$, it must be the case
    that either $f(\gamma)=\gamma$ and $f(\gamma')=\gamma'$ or
    $f(\gamma)=\gamma'$ and $f(\gamma')=\gamma$.  But there exists a
    separating curve $\eta$ that intersects $\gamma$ but not $\gamma'$.
    Because $f(\eta)=\eta$, it cannot be the case that $f(\gamma)=
    \gamma'$.  Therefore $f(\gamma)=\gamma$ as desired.  

    By the Alexander method for infinite--type surfaces, due to
    Hern\'andez--Moralez--Valdez \cite{HMVAlex}, we deduce that $f$ is the
    identity in $\Mod(\Sigma)$.

    We now show that $\Phi$ is surjective. Let $\phi: \vT(\Sigma) \to
    \vT(\Sigma)$ be an automorphism. Fix a principal exhaustion $\{P_1,
    P_2, \dots\}$ of $\Sigma$ such that $P_1$ has complexity at least six.
    Define $\sigma_i$ to be the simplex of $\vT(\Sigma)$ corresponding to
    the multicurve $\partial P_i$.  Denote by $\vP_i$ the subcomplex of
    $\vT(\Sigma)$ spanned by the curves and bounding pairs contained in
    $P_i$.  Denote by $P_i^\circ$ the surface obtained by gluing
    once-punctured disks to each boundary component of $P_i$.  By
    construction, $\sigma_i$ contains only type $X$ vertices and therefore
    $\vT(\vP_i)$ is isomorphic to $\vT(P_i^\circ)$. As the complexity of
    $P_i$ is at least six, $\vP_i$ is connected for all $i$.  By Lemma
    \ref{lem.diameter}, we know that $\vP_i$ is the unique side of
    $\sigma_i$ whose diameter is infinite.

    Since $\phi$ is a simplicial automorphism, it induces a bijection
    between the sides of $\sigma_i$ and the sides of $\phi(\sigma_i)$.
    Because all simplicial automorphisms of $\vT(\Sigma)$ are isometries,
    $\phi(\sigma_i)$ has a unique side of infinite diameter.  From Lemma
    \ref{lemma.2sided} we have that every vertex of $\phi(\sigma_i)$ is of
    type $X$ or it is a genus bounding pair.  However, since each side of
    $\phi(\sigma_i)$ is connected, it cannot contain a genus bounding pair
    by Lemma \ref{lemma.2sided}.

    We write $\vQ_i\subset\LK(\phi(\sigma_i))$ for the unique side of
    $\phi(\sigma_i)$ with infinite diameter, and $Q_i \subset \Sigma$ for
    the finite--type subsurface which it defines. By Lemma
    \ref{Lem:Fanoni2}, we can realize the curves $\bigcup \vQ_i$ by
    non-accumulating curves in $\Sigma$. The $Q_i$'s form a sequence of
    nested subsurfaces, thus their union is an open set. The
    nonaccumulation property implies that $\bigcup Q_i$ is the
    full surface $\Sigma$.
    
    Since each vertex of $\phi(\sigma_i)$ has type $X$ we have that $\vQ_i
    \cong \vT(Q_i^\circ)$. Furthermore, $\phi$ restricts to an isomorphism
    \[
    \phi_i:\vP_i \to \vQ_i.
    \]
    Since each $P_i$ is assumed to have complexity at least six, the
    combination of results of Kida \cite{Kida} and Korkmaz \cite{Korkmaz}
    implies that $\phi_i$ is induced by a homeomorphism $f_i : P_i \to
    Q_i$.  Moreover, the homeomorphism $f_{i+1}$ may be chosen so that it
    restricts to $f_i$ on the subsurface $P_i$. Hence their direct limit is
    a homeomorphism of $\bigcup P_i$ to $\bigcup Q_i$ inducing $\phi$.
    Since $\Sigma = \bigcup P_i = \bigcup Q_i$, this completes the proof.
    \qedhere
    
  \end{proof}

  We now end this subsection with the following two observations which will
  be useful later.

  \begin{lem} \label{Rigidity1}
    For any simplex $\sigma$ in $\vT(\Sigma)$ and any compactly-supported
    $f \in \vI(\Sigma)$, if $f$ preserves $\sigma$ then $f$ fixes $\sigma$
    pointwise.
  \end{lem}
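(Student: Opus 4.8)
The plan is to show that $f$ fixes every vertex of $\sigma$, i.e.\ that the permutation which $f$ induces on the (finite) vertex set of $\sigma$ is trivial. Two constraints cut this permutation down. First, since $f$ is a homeomorphism it preserves the topological type of curves, so it carries separating curves to separating curves of the same type and bounding pairs to bounding pairs. Second, since $f \in \vI(\Sigma)$ it preserves the homological data attached to a vertex: to a vertex $v$ with complementary pieces $Y$ and $Y'$ I would associate the unordered pair $s(v) = \{V_Y,\, V_{Y'}\}$ of images of $H_1(Y)$ and $H_1(Y')$ in $H_1(\Sigma)$. Because $f_* = \mathrm{id}$ and $f$ carries the pieces of $v$ onto the pieces of $f(v)$, one gets $s(f(v)) = s(v)$, so $s$ is constant on $f$-orbits.

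Granting this, the lemma reduces to the purely combinatorial-homological claim that $s$ separates the vertices of a single simplex: if $v\ne v'$ are disjoint vertices of $\vT(\Sigma)$ then $s(v)\ne s(v')$. I would prove this by running through the (very restricted) ways two disjoint vertices can sit in $\Sigma$. When $v,v'$ are both separating curves they lie in ``line position'', cutting $\Sigma$ into three pieces $A$, $M$, $B$; Lemma \ref{decomp} gives $H_1(\Sigma) = H_1(A)\oplus H_1(M)\oplus H_1(B)$, and $H_1(M)\ne 0$ because $M$ has two boundary circles and is not an annulus (as $v\not\simeq v'$), so the two splittings $\{H_1(A),\, H_1(M)\oplus H_1(B)\}$ and $\{H_1(A)\oplus H_1(M),\, H_1(B)\}$ are distinct. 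A separating curve and a bounding pair never share an invariant, since a Mayer--Vietoris computation gives $V_Y\cap V_{Y'}=0$ in the first case while $V_Y\cap V_{Y'}$ is the infinite cyclic group generated by the (nonzero, primitive) class common to the two curves of the pair in the second case.

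The main obstacle is the case of two disjoint bounding pairs $v=\{a,b\}$ and $v'=\{a',b'\}$. Here I would split according to whether $a',b'$ lie on the same complementary side of $v$ or on opposite sides. In the first case $v'$ cuts that side into two pieces, one of which is contained in a complementary piece $Y$ of $v$; matching this up using $s(v)=s(v')$ forces, via Mayer--Vietoris, that some complementary piece of $v$ or $v'$ is an annulus, hence that the two curves of a bounding pair are isotopic --- a contradiction; the alternative ``crossed'' matching of the two sides is excluded because it would make $f$ carry a complementary side of $v$ properly inside itself, contradicting that some power of $f$ fixes $v$ (the orbit of $v$ in $\sigma$ is finite) and therefore fixes that side. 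The opposite-sides case decomposes $\Sigma$ into a $4$-cycle of pieces glued along $a,a',b,b'$, and one argues along the same lines, combining the induced action of $f$ on those four pieces with $f_*=\mathrm{id}$. Alternatively, and perhaps more transparently, one can use the compact-support hypothesis directly: choosing a term $P$ of a principal exhaustion that contains representatives of all curves of $\sigma$ together with a supporting subsurface for $f$, naturality of $f_*$ and the split injection $H_1(P)\hookrightarrow H_1(\Sigma)$ of Lemma \ref{decomp} show that $f$ restricts to an element of $\vI(P)$, each curve of $\sigma$ remaining separating or part of a bounding pair in $P$, so that the claim follows from the corresponding finite-type statement contained in \cite{BM,BMadden,Kida}.
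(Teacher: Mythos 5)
Your overall strategy (attach to each vertex the unordered pair of images of the homologies of its two complementary pieces, note this pair is $f$-invariant because $f_*=\mathrm{id}$, and then show the invariant separates disjoint vertices) is reasonable in spirit, but the key homological steps are not correct as stated. First, the claimed splitting $H_1(\Sigma)=H_1(A)\oplus H_1(M)\oplus H_1(B)$ is false and is not what Lemma \ref{decomp} gives: that lemma concerns terms of a principal exhaustion (finite-type pieces with separating boundary and infinite-type complement), whereas here the maps $H_1(A),H_1(M),H_1(B)\to H_1(\Sigma)$ are in general neither injective nor independent --- for instance the class of $v$ lies in the image coming from $A$ and from $M$ simultaneously. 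What your argument actually requires is that the unordered pairs $\{V_A,\,V_M+V_B\}$ and $\{V_A+V_M,\,V_B\}$ of images differ, i.e.\ that $V_M\not\subseteq V_A\cap V_B$ (and that $V_A\neq V_B$); the observation ``$H_1(M)\neq 0$'' does not yield this, and proving it needs a genuine argument (intersection pairing for genus classes, an analysis of end classes, etc.). Second, the asserted dichotomy ``$V_Y\cap V_{Y'}=0$ for a separating curve'' is simply false on infinite-type surfaces: a separating curve need not be nullhomologous (a curve splitting a two-ended infinite-genus surface into two one-ended pieces has a nonzero primitive class, which lies in both images); this case is anyway unnecessary, since a homeomorphism cannot send a separating curve to a bounding pair. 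Third, the case you yourself identify as the main obstacle --- two disjoint bounding pairs --- is only sketched: the step ``forces, via Mayer--Vietoris, that some complementary piece is an annulus'' and the exclusion of the crossed matching are assertions, not proofs. Your final fallback (take a principal-exhaustion term $P$ containing the curves and the support of $f$, note via the split injection $H_1(P)\hookrightarrow H_1(\Sigma)$ that $f|_P\in\vI(P)$, and quote a finite-type rigidity statement) is the germ of a complete argument, but as written it rests on an unspecified citation and on $\sigma$ being finite (the latter is easily repaired by working only with the pair $v$, $f(v)$).

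For comparison, the paper's proof is much shorter and avoids all of this. Given $v\neq w=f(v)$ in $\sigma$, it splits into two cases according to whether $v$ has a finite-type complementary side. If both sides of $v$ are infinite type, one chooses a compact domain $Y$ supporting $f$ in which $v$ and $w$ have different topological types --- possible precisely because the complementary pieces are infinite type --- and this contradicts $f|_Y(v)=w$; note this is where the compact-support hypothesis is used, which your main route never invokes. If $v$ has a finite-type side $V$, then $w$ has a finite-type side $W$ disjoint from $V$, and $f(V)\subseteq W$ contradicts $f_*=\mathrm{id}$. You should either adopt this two-case argument or carry out your finite-type reduction carefully; the homological-invariant route, as you have written it, has genuine gaps.
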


  \begin{proof}
    Consider two vertices $v, w \in \sigma$, and assume $f(v) = w$.  Then
    $v$ and $w$ are either both separating curves or they are both bounding
    pairs. 
    
    Suppose $v$ separates $\Sigma$ into two infinite-type subsurfaces.
    Since $v$ and $w$ are disjoint, we can find a compact domain $Y \subset
    \Sigma$ such that $v$ and $w$ are of different topological type. By
    choosing $Y$ large enough, we can assume that $Y$ supports $f$. But
    then it is impossible for $f|_Y(v) = f(v) = w$. 
    
    Suppose now that $v$ (and $w$) separates $\Sigma$ into a finite-type
    subsurface and an infinite-type subsurface.  Call the finite-type
    subsurfaces $V$ and $W$, corresponding to $v$ and $w$ respectively.
    Since $v\in\Sigma\setminus W$ and $w\in\Sigma\setminus V$, the
    subsurfaces $V$ and $W$ are disjoint. By assumption $f(v)=w$, therefore
    $V$ and $W$ must have the same topological type and $f(V) \subset W$.
    But this is impossible if $f \in \vI(\Sigma)$. \qedhere    

   \end{proof}   
   
   \begin{lem} \label{Rigidity2}
     For any bounding pair $v \in \vT(\Sigma)$ and any $f \in \vI(\Sigma)$,
     if $f$ preserves $v$, then $f$ must fix the curves in $v$.
   \end{lem}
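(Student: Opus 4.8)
The plan is to argue by contradiction. Write $v=\{\gamma,\delta\}$ and suppose that $f(\gamma)=\delta$ and $f(\delta)=\gamma$; I will derive a contradiction with $f\in\vI(\Sigma)$, i.e.\ with $f_*$ being the identity on $H_1(\Sigma;\ZZ)$. Since $f$ fixes the isotopy class of the multicurve $\gamma\cup\delta$, I would pick a representative homeomorphism of $f$ carrying $\gamma\cup\delta$ to itself; it then permutes the closures $R$ and $L$ of the two components of $\Sigma\setminus(\gamma\cup\delta)$. Because $\Sigma$ has empty boundary we have $\partial R=\partial L=\gamma\cup\delta$, and neither $R$ nor $L$ is an annulus since $\gamma$ and $\delta$ are non-isotopic. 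Orienting $\gamma$ and $\delta$ as the boundary of $R$, one has $[\gamma]_\Sigma+[\delta]_\Sigma=0$ in $H_1(\Sigma;\ZZ)$, while $[\gamma]_\Sigma\neq 0$ because $\gamma$ is non-separating. There are then two cases, according to whether the chosen homeomorphism preserves each of $R,L$ or interchanges them.

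First suppose it preserves $R$ (and $L$). Then $f|_R$ is an orientation-preserving self-homeomorphism of $R$ taking $\gamma$ to $\delta$, hence carrying the induced boundary orientation of $\gamma$ to that of $\delta$. Consequently $f_*[\gamma]_\Sigma=[\delta]_\Sigma=-[\gamma]_\Sigma$, and since $f_*=\mathrm{id}$ this forces $2[\gamma]_\Sigma=0$, contradicting the fact that $H_1(\Sigma;\ZZ)$ is torsion-free.

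Now suppose instead that $f$ interchanges $R$ and $L$. Then $f|_R\colon R\to L$ is a homeomorphism, so $f_*$ carries $i_{R*}H_1(R)$ onto $i_{L*}H_1(L)$, where $i_R,i_L$ denote the inclusions; since $f_*=\mathrm{id}$, these two subgroups of $H_1(\Sigma)$ coincide. A Mayer--Vietoris computation for $\Sigma=R\cup L$, using the relation $[\delta]_\Sigma=-[\gamma]_\Sigma$, shows that $i_{R*}$ is injective and that $i_{R*}H_1(R)\cap i_{L*}H_1(L)=\ZZ\,[\gamma]_\Sigma$. Hence $i_{R*}H_1(R)=\ZZ\,[\gamma]_\Sigma\cong\ZZ$, and by injectivity $H_1(R;\ZZ)\cong\ZZ$. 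A surface with exactly two boundary circles and first homology $\ZZ$ is an annulus, so $\gamma$ and $\delta$ would be isotopic in $\Sigma$, contradicting that $v$ is a bounding pair. In either case we obtain a contradiction, so $f(\gamma)=\gamma$ and $f(\delta)=\delta$.

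I expect the only delicate point to be the homological bookkeeping for possibly infinite-type subsurfaces: concretely, verifying that $[\gamma]_\Sigma+[\delta]_\Sigma=0$ (immediate when $R$ or $L$ is compact, and in general obtained by exhausting $R$ by compact subsurfaces with fixed outer boundary $\gamma\cup\delta$, whose inner boundaries are then forced to be null-homologous in $\Sigma$), and that $i_{R*}$ is injective in the second case (here one uses that $f$ realizes $R\cong L$, so both are finite-type or both infinite-type, and in each situation the classes $[\gamma],[\delta]$ of the boundary curves behave as required). The conceptual content is simply that a Torelli element can neither reverse the class $[\gamma]=-[\delta]$—which is what fixing $R$ while swapping $\gamma$ and $\delta$ would do—nor interchange the two sides of a bounding pair, whose homologies meet only along the line $\ZZ\,[\gamma]$.
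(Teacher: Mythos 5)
Your Case 1 is exactly the paper's argument (the paper first notes that $f\in\vI(\Sigma)<\PMod(\Sigma)$ is pure, hence preserves each complementary component, and then compares $f_*$ of the boundary-oriented curves), so the real divergence is your Case 2 — and that case is both unnecessary and, as written, does not work. The paper kills the swapping case in one line: $f$ fixes every end of $\Sigma$, the ends are partitioned between the two sides (and if one side is compact the two sides are not even homeomorphic), so $f$ cannot interchange $R$ and $L$. You never invoke purity and instead run a Mayer--Vietoris computation; but if the swap case occurred at all, then $R\cong L$ would force both sides to be noncompact, and in precisely that regime your identification $i_{R*}H_1(R)\cap i_{L*}H_1(L)=\ZZ[\gamma]_\Sigma$ fails: Mayer--Vietoris only gives that this intersection is the subgroup generated by $[\gamma]_\Sigma$ and $[\delta]_\Sigma$, which need not be cyclic, so the ``$R$ is an annulus'' contradiction does not follow.

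The underlying genuine gap is the identity $[\gamma]_\Sigma+[\delta]_\Sigma=0$ (curves oriented as $\partial R$), which you correctly flag as the delicate point but then justify incorrectly: the inner boundary curves of a compact exhaustion of $R$ are \emph{not} forced to be null-homologous in $\Sigma$ — on an infinite-type surface separating curves are often homologically nontrivial (e.g.\ the core of $S^1\times\RR$, or any curve separating two ends). In fact the identity itself fails whenever both $R$ and $L$ are noncompact, which happens for many vertices of $\vT(\Sigma)$ under the paper's definition of bounding pair: take a properly embedded line $\ell$ from an end lying in $R$ to an end lying in $L$; algebraic intersection with $\ell$ is a well-defined homomorphism $H_1(\Sigma;\ZZ)\to\ZZ$ sending $[\gamma]+[\delta]$ to $\pm1$, since $\ell$ crosses $\partial R$ a net of one time. (A finite-type shadow: two disjoint nonseparating curves on a twice-punctured torus separating the punctures, where $[\gamma]+[\delta]$ is minus a puncture class.) So your Case 1, as written, only covers bounding pairs with one compact side; to be fair, the paper's own assertion ``$\alpha=-\beta$'' has the same issue. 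The repair is easy and avoids the identity entirely: orienting both curves as $\partial R$, every closed curve $x$ satisfies $\ip{x}{\gamma}+\ip{x}{\delta}=0$ because $\gamma\cup\delta$ separates; if $f$ preserves $R$ and $f(\gamma)=\delta$, then $[\gamma]=f_*[\gamma]=[\delta]$ in $H_1(\Sigma;\ZZ)$, hence $\ip{x}{\gamma}=0$ for all $x$, contradicting that $\gamma$ is nonseparating. With that substitution, and with purity replacing your Case 2, your argument becomes a complete proof along the paper's lines.
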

     
   \begin{proof}

     Suppose $v = \{\alpha,\beta\}$ and $f(\alpha) = \beta$.  The
     complement of $v$ in $\Sigma$ has two components.  Since $f$ is pure
     it preserves the components. Let $Y$ be one of the components. The
     orientation of $Y$ induces an orientation on $\alpha$ and $\beta$.
     These oriented curves (as homology classes) satisfy $\alpha = -\beta$.
     The map $f$ is orientation preserving, so it must preserve the
     orientation of $\alpha$ and $\beta$. That is, $f(\alpha) = \beta$ as
     oriented curves. But then $\alpha$ and $f(\alpha)$ cannot be
     homologous, contradicting that $f \in \vI(\Sigma)$. \qedhere
    \end{proof}

  \subsection{Algebraic characterization of twists and bounding pair maps} 

  Before proving Theorem \ref{thm.commensurator} we will need one more
  ingredient.  Notice that the vertices of $\vT(\Sigma)$ define supports of
  elements in $\vI(\Sigma)$.  We must now show that commensurations of
  $\vI(\Sigma)$ preserve such elements and therefore define a permutation
  of the vertices of the complex.  We will adapt the algebraic
  characterization of Dehn twists of Bavard--Dowdall--Rafi \cite{BDR} to
  our setting.    

  We first introduce some terminology to facilitate the characterization of
  twists and bounding pairs. Let $G<\Mod(\Sigma)$. We denote by $\vF_G$ the
  set of elements of $G$ whose conjugacy class (in $G$) is countable.
  Bavard--Dowdall--Rafi prove that if $G$ is finite-index in $\Mod(\Sigma)$
  then $f$ is in $\vF_G$ if and only if it has compact support
  \cite[Proposition 4.2]{BDR}.  Using similar methods, we will show:
  
  \begin{prop}\label{prop.finite.support}
    Let $G < \vI(\Sigma)$ be a finite-index subgroup.  An element $f$ in $G$
    has compact support if and only if $f$ is in $\vF_G$.
  \end{prop}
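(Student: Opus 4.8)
The plan is to mimic the strategy of Bavard--Dowdall--Rafi \cite[Proposition 4.2]{BDR}, which handles the analogous statement for finite-index subgroups of $\Mod(\Sigma)$, and adapt it to the Torelli setting. The statement has two directions. For the easy direction, suppose $f \in G$ has compact support, supported on a compact subsurface $K$. Then any conjugate $gfg^{-1}$ with $g \in G$ is supported on $g(K)$. Since $\Sigma$ is second countable, there are only countably many isotopy classes of compact subsurfaces of a fixed topological type, and for each such subsurface the compactly supported part of $\vI(\Sigma)$ supported there is a countable group (being a direct limit of subgroups of mapping class groups of compact surfaces, which are countable). Hence the conjugacy class of $f$ in $G$ (indeed in $\vI(\Sigma)$) is countable, so $f \in \vF_G$.

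The substantial direction is the converse: if $f \in G$ does \emph{not} have compact support, then its $G$-conjugacy class is uncountable. Here I would split into cases according to $\partial f$, the canonical reduction multicurve from Lemma \ref{Reducing} (applicable since $f$, being noncompactly supported but lying in the pure mapping class group, has infinite order). The first case is when $f$ has ``infinite support'' in an essential way — roughly, $f$ moves a curve arbitrarily far from any fixed basepoint, or equivalently some orbit $\{f^k(\alpha)\}$ leaves every compact set. In this case one exhibits uncountably many conjugates directly: pick a sequence of disjoint subsurfaces $\{Y_n\}$ exhausting an end, find for each subset $S \subseteq \NN$ an element $g_S \in \vI(\Sigma)$ built from compactly supported Torelli elements supported on the $Y_n$ with $n \in S$ (e.g.\ separating twists, whose existence in each $Y_n$ requires $Y_n$ to have enough topology — one can always arrange this by thickening), and show $g_S f g_S^{-1}$ are pairwise distinct for distinct $S$, using that the ``local behavior of $f$ near the $n$-th subsurface'' is perturbed differently. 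The second case, when $\partial f$ itself is an infinite multicurve, is similar or can be reduced to the first: an infinite reduction system already forces uncountably many conjugates by conjugating with compactly supported Torelli twists that alter which curves of $\partial f$ are ``linked'' with a test curve. Throughout, the extra constraint relative to \cite{BDR} is that all conjugating elements must lie in $\vI(\Sigma)$ (in fact in $G$, but since $G$ is finite-index it suffices to produce uncountably many conjugates by $\vI(\Sigma)$ and then pass to a finite-index subgroup, which preserves uncountability); this is why one conjugates by separating twists and bounding pair maps rather than arbitrary Dehn twists.

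The main obstacle I expect is precisely this last point: verifying that one can always find \emph{Torelli} elements — separating twists or bounding pair maps — supported in the prescribed disjoint regions $Y_n$, and that conjugating $f$ by products of these genuinely yields distinct mapping classes. In \cite{BDR} one has the luxury of using single Dehn twists about arbitrary curves, which are abundant and have very controlled conjugation behavior; separating twists require the local subsurface to be non-planar or have at least two boundary-or-puncture ``sides,'' and bounding pair maps require a pair of homologous nonseparating curves. One must check that a neighborhood of any end can be chosen to contain infinitely many disjoint such configurations (for an end accumulated by genus, nested genus-one separating curves work; for a planar end accumulated by punctures or boundary, separating curves enclosing two punctures work), which is a case analysis on the type of end that $f$ fails to be compactly supported near. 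Given Lemma \ref{Reducing} and the structure theory already set up, I would then conclude: an element is in $\vF_G$ iff it has compact support, which is the proposition.
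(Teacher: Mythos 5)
Your proposal is correct and takes essentially the same route as the paper: the easy direction via countability of compactly supported classes, and for the converse the adaptation of \cite[Proposition 4.2]{BDR} in which the conjugating twists are taken about separating curves (available near every end of an infinite-type surface) so that they lie in $\vI(\Sigma)$, together with the finite-index reduction to $G$. The only difference is economy: the paper cites the BDR construction of the disjoint curves $a_i$ as a black box and simply notes they may be chosen separating, whereas you sketch rebuilding the uncountable family of conjugates from scratch, which is more work than needed but the same idea.
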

  
  \begin{proof}
    It is clear that compactly-supported mapping classes have countable
    conjugacy classes. For the opposite direction, the argument in
    \cite[Proposition 4.2]{BDR} exhibits a infinite sequence of
    pairwise-disjoint curves $a_i$ such that the Dehn twists about the
    $a_i$ give rise to uncountably many conjugates of $f$. Since $\Sigma$
    has infinite type, the curves $a_i$ may be chosen to be separating, so
    that the corresponding twists belong to $\vI(\Sigma)$. Hence the result
    follows. \qedhere
  \end{proof}

  We denote by $Z(H)$ the center of $H$. If $h$ is in $H$, we write
  $C_H(h)$ for the centralizer of $h$ in $H$.  Given a finite-index
  subgroup $G < \vI(\Sigma)$ we write $\vM_G$ for the set of elements $f$
  in $G$ which
  satisfy the following three conditions: 
  \begin{enumerate}
  \item $f\in \vF_G$,
  \item $Z(\vF_G \cap C_G(f))$ is an infinite cyclic group, and
  \item $C_G(f) = C_G(f^k)$ for every $k>0$. 
  \end{enumerate}
  
  We now prove that, for any finite-index subgroup $G$ of $\vI(\Sigma)$,
  powers of Dehn twists and bounding pair maps belong to the set $\vM_G$.
  
  \begin{lem}\label{lem.multitwist}
    Let $G < \vI(\Sigma)$ be a finite-index subgroup. If $f \in G$ is a
    power of a Dehn twist about a separating curve or a bounding pair map
    then $f$ belongs to $\vM_G$.
  \end{lem}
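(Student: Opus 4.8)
The strategy is to verify the three defining conditions of $\vM_G$ directly for $f$, a power of a separating twist or a bounding pair map lying in the finite-index subgroup $G$. Condition (1), namely $f \in \vF_G$, is immediate from Proposition \ref{prop.finite.support}, since such an $f$ has compact support. The bulk of the work is therefore conditions (2) and (3), both of which concern the structure of the centralizer $C_G(f)$, and the key geometric input is the following: for $f$ supported on the multicurve $\partial f$ (a separating curve, or a bounding pair $\{\gamma,\delta\}$), an element $g \in G$ commuting with $f$ must preserve $\partial f$ as a set — this is a standard fact about centralizers of (multi)twists, valid because $\partial f$ is canonically associated to $f$ (cf.\ Lemma \ref{Reducing} and the discussion of $\partial f$), and it restricts to finite-index subgroups without change.

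For condition (3), $C_G(f) = C_G(f^k)$ for all $k>0$: the inclusion $C_G(f) \subset C_G(f^k)$ is trivial, so I would argue the reverse. If $g$ commutes with $f^k$, then $g$ preserves $\partial(f^k) = \partial f$, hence $g$ permutes the curves of $\partial f$ and sends the defining configuration to an isotopic one; using that $g \in \vI(\Sigma)$ acts trivially on homology — and invoking Lemma \ref{Rigidity2} in the bounding-pair case and the analogous (easier) observation for a separating curve — one concludes $g$ fixes each curve of $\partial f$, and then $g$ commutes with the twist(s) about those curves, i.e.\ with $f$ itself. The point is that passing to a power does not enlarge the canonical reducing system, so the centralizer cannot grow.

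For condition (2), I would first identify $\vF_G \cap C_G(f)$ as the compactly-supported elements of $C_G(f)$ (again by Proposition \ref{prop.finite.support}). Any such element preserves $\partial f$ and, by the argument above, fixes its curves, so it commutes with the twists about them; its center $Z(\vF_G \cap C_G(f))$ then contains a power of $f$. To see the center is \emph{exactly} infinite cyclic, I would show it is generated by the smallest power of the relevant twist (or bounding pair map) lying in $G$: a compactly-supported element that is central in $C_G(f)$ must commute with compactly-supported twists on both sides of $\partial f$, and standard curve-complex arguments (there are curves in each complementary component intersecting any given curve) force such an element to be supported, up to isotopy, on $\partial f$ alone, hence to be a power of $f$ up to the finite-index constraint. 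The main obstacle I anticipate is this last step — pinning down that the center is infinite \emph{cyclic} rather than merely containing an infinite cyclic subgroup — which requires a careful argument that no extra central elements hide on the two sides; here the infinite-type hypothesis is used to produce, in each complementary piece, enough filling curves (with separating intersecting curves, as in Proposition \ref{prop.finite.support}) to rule them out, and one must track the finite-index subtlety that $f$ itself may be only a proper power of the generator of the center.
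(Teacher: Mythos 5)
Your overall skeleton matches the paper's: verify the three conditions of $\vM_G$ directly, with (1) immediate from compact support and (3) following from the fact that an element commuting with any nonzero power of $T_\gamma$ (or of a bounding pair map) must preserve the underlying (multi)curve, hence — using purity, homology, and Lemma \ref{Rigidity2} in the bounding-pair case — fix its components and commute with $f$ itself. That part of your argument is sound and is essentially the paper's, which simply records that $C_{\vI(\Sigma)}(T_\gamma^k)=\{g : g(\gamma)=\gamma\}$ for all $k\neq 0$.

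The gap is in condition (2), at exactly the step you flag as the obstacle. Your mechanism for showing that a central element $g$ of $\vF_G\cap C_G(f)$ is supported on $\partial f$ is to test $g$ against \emph{separating} twists in the complementary pieces. But commuting with all such twists only shows that $g$ fixes every separating curve disjoint from $\gamma$; it does not by itself rule out a $g$ that moves some \emph{non-separating} curve $\delta$ disjoint from $\gamma$, and "standard curve-complex arguments" with separating curves do not obviously close this case — in the Torelli setting this is the crux, not a routine step. The paper's resolution uses the Torelli hypothesis on $g$ in an essential way: if $g(\delta)\neq\delta$ with $\delta$ non-separating, then $[g(\delta)]=[\delta]$ since $g$ acts trivially on homology, and $g(\delta)$ is disjoint from $\gamma$ because $g$ commutes with $f=T_\gamma^n$; hence some power of $T_\delta T_{g(\delta)}^{-1}$ lies in $\vF_G\cap C_G(f)$ (it is compactly supported, acts trivially on homology, commutes with $f$, and a power lands in $G$) yet fails to commute with $g$, so $g$ is not central. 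Your proposal never produces such a test element, and without it (or an equivalent argument using bounding pairs plus Lemma \ref{Rigidity2}, as in Lemma \ref{EssSupp}) the conclusion that central elements are powers of $T_\gamma$ (respectively of the bounding pair map) is not justified. Once that case is handled, the rest of your outline — the center is then a nontrivial subgroup of the infinite cyclic group of powers of the twist (or bounding pair map) lying in $G$, hence infinite cyclic — goes through, including the minor point that in the bounding-pair case a multitwist $T_{\gamma_1}^aT_{\gamma_2}^b$ lies in $\vI(\Sigma)$ only when $a=-b$.
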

  
  \begin{proof}
    Since $f$ has compact support, $f \in \vF_G$. Suppose first $f$ is a
    power of a Dehn twist about the separating curve $\gamma$.  We have
    that
    \[
      C_{\vI(\Sigma)}(T_\gamma^k) = \{ g \in \vI(\Sigma) \mid g ( \gamma )
      = \gamma \},
    \] 
    for $k \neq 0$.  It follows that all powers of $T_\gamma$ have the same
    centralizer in $\vI(\Sigma)$ and hence, in any subgroup.  A similar
    argument holds if $f$ is a power of bounding pair map.  This implies
    the third condition in the definition of $\vM_G$.
  
    To see that $f$ satisfies the second condition, once again assume first
    that $f$ is a power of the Dehn twist about a separating curve
    $\gamma$.  Let $g$ be a nontrivial element of $\vF_G \cap C_G(f)$ and
    assume that $g$ is not a power of $T_\gamma$.  Then there exists a
    curve $\delta$ disjoint from $\gamma$ such that $g(\delta) \neq
    \delta$.  If  $\delta$ is a separating curve then $T_\delta^k$ is in
    $C_G(f)$, for some $k>0$, but $g T_\delta^k \neq T_\delta^k g$, so $g$
    is not in $Z(\vF_G \cap C_G(f))$.  On the other hand, suppose $\delta$
    is a non-separating curve; since $g$ acts trivially on homology we have
    that $\delta$ and $g(\delta)$ are homologous curves.  Because $T_\gamma
    g(\delta)=g T_\gamma(\delta)=g(\delta)$, we have that $g(\delta)$ is
    disjoint from $\gamma$.  It follows that some power of $T_\delta
    T_{g(\delta)}^{-1}$ belongs to $C_G(f)$, but does not commute with $g$.
    We have therefore shown that $g$ is not central in $\vF_G \cap C_G(f)$
    as desired.
  
    A similar argument also shows that, if $f$ is a power of a bounding
    pair map, then $g$ belongs to $Z(\vF_G \cap C_G(f))$ if and only if it
    is a power of the same bounding pair map. \qedhere 
 
  \end{proof}
  
  When $G$ is a finite-index subgroup of $\Mod(\Sigma)$, all elements of
  $\vM_G$ are powers of multi Dehn twists, see \cite[Lemma 4.5]{BDR}.  In
  stark contrast, this is no longer true in our setting. When $G$ is a
  finite-index subgroup of $\vI(\Sigma)$, then $\vM_G$ may contain elements
  which are not supported on a disjoint union of annuli: for example, we
  may take a pure braid on a nonseparating planar subsurface with at least
  three boundary components. The following proposition characterizes those
  elements that lie in $\vM_G$, when $G$ is a finite-index subgroup of
  $\vI(\Sigma)$. Recall the definition of $\partial f$ for a mapping class
  $f$, which is non-empty when $f$ is compactly-supported. We have the
  following statements.

  \begin{lem} \label{EssSupp}
    Let $G < \vI(\Sigma)$ be a finite-index subgroup. Given a nontrival $f
    \in \vM_G$, let $\sigma \subset \partial f$ be the set of all
    separating curves or bounding pairs. The following statements hold. 
    \begin{enumerate}
      \item For all $g \in \vF_G \cap C_G(f)$, $g$ fixes every component of
        $\Sigma \setminus \partial f$.
      \item If $\sigma$ is empty, then for each finite-type domain $Y$ of
        $\Sigma \setminus \partial f$, $f|_Y$ is either the identity map or
        pseudo-Anosov. Furthermore, there is at least one pseudo-Anosov
        component of $f$. 
      \item If $\sigma$ is non-empty then $\sigma=\partial f$ is either a
        separating or a bounding pair, and $f=T_\sigma^j$ for some
        non-zero $j$, where $T_\sigma$ is either a separating twist or a
        bounding pair map.  
    \end{enumerate}
  \end{lem}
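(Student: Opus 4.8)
The plan is to analyze the structure of an element $f \in \vM_G$ via its canonical multicurve $\partial f$, exploiting the three defining conditions of $\vM_G$ together with Lemma \ref{Reducing}. First I would establish part (1). Fix $g \in \vF_G \cap C_G(f)$. Since $g$ commutes with $f$, it permutes the components of the canonical reduction system: $g(\partial f) = \partial f$ by uniqueness of $\partial f$, and $g$ induces a permutation of the components of $\Sigma \setminus \partial f$. I claim this permutation is trivial. Condition (2) in the definition of $\vM_G$ says $Z(\vF_G \cap C_G(f))$ is infinite cyclic; pick a generator $z$, which by Lemma \ref{lem.multitwist} we expect to be (a power of) a separating twist or bounding pair map supported on some component-boundary of $\partial f$. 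Because $z$ is central in $\vF_G \cap C_G(f)$, every $g \in \vF_G \cap C_G(f)$ commutes with $z$, hence fixes $\supp(z)$ and its side decomposition; combining this with connectedness arguments on the complementary pieces — and the fact that $\Sigma$ has infinitely many ends one can use to manufacture enough separating twists in $\vF_G \cap C_G(f)$ to "pin down" each component — forces $g$ to fix every component of $\Sigma \setminus \partial f$. This is the step I expect to require the most care, since one must produce, for any two distinct components, a compactly-supported Torelli element centralizing $f$ that distinguishes them.

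For part (2), assume $\sigma = \emptyset$, so no component of $\partial f$ is a separating curve or a bounding pair. Let $Y$ be a finite-type domain of $\Sigma \setminus \partial f$. By Lemma \ref{Reducing}, $f$ has finite (hence, being pure, compact) support contained in the union of finite-type domains of $\Sigma \setminus \partial f$, so $f|_Y$ is a well-defined finite-type mapping class with no proper reduction system disjoint from $\partial Y$ — i.e.\ $f|_Y$ is either the identity or pseudo-Anosov by Nielsen--Thurston classification applied to $Y$ (after capping; one must check that a would-be reducing curve for $f|_Y$ would enlarge $\partial f$, contradicting minimality, \emph{unless} it is peripheral or a separating/bounding-pair curve, but the latter are excluded since $\sigma = \emptyset$ and separating/bounding-pair reducing curves would lie in $\partial f \cap \sigma$). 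That at least one component is pseudo-Anosov follows because $f$ is nontrivial and $\partial f$ is nonempty only when there is genuine reduction: if every $f|_Y$ were the identity then $f$ would be a product of twists about $\partial f$, but then every curve of $\partial f$ would be a separating curve or part of a bounding pair after accounting for how $f$ acts on homology — contradicting $\sigma = \emptyset$ once we use that $f \in \vI(\Sigma)$ kills homology, so a single nonseparating twist cannot lie in Torelli.

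For part (3), assume $\sigma \neq \emptyset$ and pick a component $c \in \sigma$. The key is to use conditions (2) and (3) to rule out any other component of $\partial f$. Suppose $\partial f$ had a second component $c'$. Then both $T_c^{m}$ and $T_{c'}^{m}$ (or the corresponding bounding-pair maps, raised to a power landing in $G$) lie in $\vF_G \cap C_G(f)$; these commute, so $Z(\vF_G \cap C_G(f))$ would contain two independent infinite-order elements, contradicting that it is infinite cyclic. Hence $\partial f$ is a single separating curve or a single bounding pair, $\partial f = \sigma$, and by part (2)'s dichotomy (now applied to show the complementary pieces carry no pseudo-Anosov, since any such piece would again furnish, via its own interior reduction structure or its first-homology action, an obstruction to conditions (2)/(3)) we conclude $f$ is supported on an annular neighborhood of $\sigma$, i.e.\ $f = T_\sigma^{j}$ for some $j \neq 0$ (the exponent is nonzero as $f$ is nontrivial). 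I would also invoke Lemma \ref{Rigidity2} to guarantee that, in the bounding-pair case, the relevant object is genuinely a bounding pair map $T_\alpha T_\beta^{-1}$ rather than some twist product outside $\vI(\Sigma)$. The main obstacle throughout is part (1): converting the algebraic centralizer condition into the geometric statement that complementary components are individually fixed, which is where the hypothesis that $\Sigma$ is infinite-type (so one can always find separating curves in each complementary piece) does the essential work.
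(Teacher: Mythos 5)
Your proposal does not follow the paper's route, and both of the places you yourself flag as delicate contain genuine gaps. For part (1), your mechanism fails at the start: you propose that a generator of $Z(\vF_G \cap C_G(f))$ is (a power of) a separating twist or bounding pair map and that one can then ``manufacture'' separating twists to pin down the complementary components. But in the case $\sigma=\emptyset$ the center contains no twists at all: twists about the (non-separating, non-bounding-pair) curves of $\partial f$ do not even lie in $\vI(\Sigma)$, and the center is generated by an element sharing a power with $f$, typically with pseudo-Anosov pieces. The paper's argument is entirely different and you would need it: since $g\in\vI(\Sigma)$ acts trivially on homology and, when $\sigma=\emptyset$, the curves of $\partial f$ are pairwise non-homologous, $g$ cannot permute them; when $\sigma\neq\emptyset$, $g$ fixes the curves of $\sigma$ by Lemmas \ref{Rigidity1} and \ref{Rigidity2}. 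Fixing the curves of $\partial f$ then forces fixing the complementary components by a purity argument: if $g(Y)\neq Y$ while $Y$ and $g(Y)$ share the same boundary, then $\Sigma = Y\cup g(Y)\cup\partial Y$, so $Y$ has infinite type and $g$ could not be pure. Your ``connectedness/ends'' sketch does not substitute for this (and note an infinite-type surface need not have more than one end).

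Part (3) has two problems. First, commuting is not centrality: from the fact that $T_c^m$ and $T_{c'}^m$ commute inside $\vF_G\cap C_G(f)$ you cannot conclude they lie in $Z(\vF_G\cap C_G(f))$; centrality of $T_v^k$ for a vertex $v\in\sigma$ must be derived from part (1) together with Lemmas \ref{Rigidity1} and \ref{Rigidity2}. Second, even repaired, your argument only excludes a second vertex of $\sigma$; it does not exclude additional non-separating curves of $\partial f$ (for such a curve $c'$ the twist $T_{c'}^m$ is not in Torelli, so your witness need not exist in $G$), nor pseudo-Anosov pieces of $f$, and the appeal to ``part (2)'s dichotomy'' and unspecified ``obstructions to conditions (2)/(3)'' is not an argument. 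The paper's decisive step, missing from your proposal, is that $f$ itself lies in $Z(\vF_G\cap C_G(f))$ (it is in $\vF_G$ and commutes with its own centralizer); since that center is infinite cyclic and also contains $T_v^k$, one gets $f^m=T_v^n$, and then condition (3) of $\vM_G$ gives $C_G(f)=C_G(f^m)=C_G(T_v^n)$, so $T_w^k\in C_G(f)$ for every separating curve or bounding pair $w$ disjoint from $v$; hence $f$ fixes every curve disjoint from $v$ (using Lemma \ref{Rigidity2}) and $f=T_v^j$, which simultaneously shows $\partial f=\sigma=v$. Two smaller gaps: in (2) the dichotomy ``identity or pseudo-Anosov'' needs torsion-freeness of $\Mod(Y)$ (which the paper gets from $\chi(Y)<0$), not just Nielsen--Thurston; and ruling out the no-pseudo-Anosov case requires knowing that a nontrivial multitwist about several non-separating, non-bounding-pair curves is not in Torelli (Vautaw's theorem), not merely that a single non-separating twist is not.
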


  \begin{proof}

    For any $g \in \vF_G \cap C_G(f)$, $g(\partial f) = \partial (gfg^{-1})
    = \partial f$. To show that $g$ fixes every component of $\Sigma
    \setminus \partial f$, it is enough to show $g$ fixes each component of
    $\partial f$. To see this, suppose there exists a domain $Y$of $\Sigma
    \setminus \partial f$ such that $g(Y)$ is not $Y$. Since $g$ fixes the
    components of $\partial f$, $Y$ and $g(Y)$ share the same boundary.
    This means $\Sigma = Y \cup g(Y)\cup\partial Y$, so $Y$ must have
    infinite type. But then $g$ cannot be a pure mapping class,
    contradicting the fact that $g \in \vI(\Sigma)$. In the following, we
    will consider the two cases that $\sigma$ is empty or $\sigma$ is not
    empty.  For each case, we show $g$ fixes the components of $\partial f$
    along with the statements of (2) and (3).
    
    First suppose that $\sigma$ is empty. In this case, every curve of
    $\partial f$ is non-separating and no two form a bounding pair. In
    particular, no two curves of $\partial f$ are homologous, so every
    element of $\vF_G \cap C_G(f)$ must fix the components of $\partial f$.
    Now let $Y$ be a component of $\Sigma \setminus \partial f$ of
    finite-type. The map $f|_Y$ is irreducible (otherwise the reducing
    curves would also be in $\partial f$), so it is either of finite order
    or pseudo-Anosov. We now show it is not possible for $f|_Y$ to be
    non-trivial and have finite order. Since $\partial f$ does not contain
    any separating curves or bounding pairs, it has cardinality at least
    $3$. This means $\chi(Y) < 0$, in which case $\Mod(Y)$ is torsion-free
    (\cite[Corollary 7.3]{FM}). Hence, if $f|_Y$ is not pseudo-Anosov, then
    $f|_Y$ is identity. Finally, if there is no pseudo-Anosov component of
    the support of $f$, then there exists a power $k \ge 1$ such that
    $f^k|_Y$ is identity for any component $Y$ in $\Sigma \setminus
    \partial f$. But $f$ has infinite order by Lemma \ref{Reducing}, so
    $f^k$ must be a product of powers of Dehn twists about curves in
    $\partial f$. This is impossible as $f \in \vI(\Sigma)$ \cite{Vau}. 
   
    Now suppose $\sigma$ is non-empty. Regard $\sigma$ as a simplex in
    $\vT(\Sigma)$. Every element $g \in \vF_G \cap C_G(f)$ preserves
    $\partial f$, and hence also $\sigma$. By Lemma \ref{Rigidity1}, $g$
    fixes each vertex of $\sigma$. Then, by Lemma \ref{Rigidity2}, we can
    further conclude that $g$ also fixes each curve in $\sigma$. Let $v$ by
    a vertex of $\sigma$, and let $T_v$ be the twist about $v$, which
    belongs to    $\vI(\Sigma)$. 
    
    We now show $f = T_v^j$ for some non-zero $j$. In particular,
    $v=\sigma=\partial f$. Since every $g \in \vF_G \cap C_G(f)$ preserves
    $v$, $g$ commutes with $T_v$, showing $T_v^k \in Z (\vF_G \cap C_G(f))$
    for some $k \ge 1$. Since $f$ also lies in $Z(\vF_G \cap C_G(f))$ which
    is infinite cyclic by assumption, we must have $f^m = T_v^n$ for some
    non-zero integers $m$ and $n$. Let $w$ a vertex of $\vT(\Sigma)$
    disjoint from $v$ and choose $k \ge 1$ so that $T_w^k \in G$. Since $v$
    and $w$ are disjoint, we have 
    \[ T_w^k \in C_G(T_v^n) = C_G(f^m) = C_G(f). \] This is only possible
    if $f(w) = w$. Thus, $f$ fixes every separating curve and bounding pair
    disjoint from $v$. Applying Lemma \ref{Rigidity2}, we obtain that $f$
    fixes every curve disjoint from $v$. This shows $f=T_v^j$ for some
    non-zero integer $j$, concluding the proof. \qedhere
    
  \end{proof}

  \begin{defn}[Essential Support] \label{Defn:EssSupp}
    For each element $f \in \vM_G$, we define the \emph{essential support}
    $\supp(f)$ of $f$ as follows. Let $\sigma \subset \partial f$ be the
    set of all separating curves or bounding pairs. 
    \begin{itemize}
      \item If $\sigma$ is nonempty, then $\supp(f) = \sigma$.  
      \item If $\sigma$ is empty, then $\supp(f)$ is the union over all
        domains $Y$ in $\Sigma \setminus \partial(f)$ such that $f|_Y$ is a
        pseudo-Anosov. Note that by Lemma \ref{EssSupp}, $\supp(f)$ is
        nonempty.
    \end{itemize}
  \end{defn}
  
  Let \[ C_{\vM_G}(f) = \{ g \in \vM_G \ |\  fg = gf \}. \] We also define
  a further subset: \[ (\vP_G)_f = \{ g \in C_{\vM_G}(f)  \mid g \mbox{ is
  supported in }\Sigma \setminus \supp(f)\}.
  \] The next lemma tells us that the elements supported in $\supp(f)$ are
  precisely those that are central.

  \begin{lem}\label{lem.sum}
    Let $G < \vI(\Sigma)$ be a finite-index subgroup.  For any element $f$
    in $\vM_G$ we have that 
    \[ C_{\vM_G}(f) = Z(C_{\vM_G}(f)) \oplus (\vP_G)_f. \]
  \end{lem}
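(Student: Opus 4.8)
The plan is to establish both inclusions in the direct-sum decomposition $C_{\vM_G}(f) = Z(C_{\vM_G}(f)) \oplus (\vP_G)_f$, treating separately the two cases from Lemma \ref{EssSupp}: when $\sigma = \partial f$ consists of a single separating curve or bounding pair (so $f = T_\sigma^j$), and when $\sigma$ is empty (so $\supp(f)$ is a union of pseudo-Anosov pieces). First I would unwind the definitions: an element $g \in C_{\vM_G}(f)$ commutes with $f$, hence by Lemma \ref{EssSupp}(1) it fixes every component of $\Sigma \setminus \partial f$; moreover since $g \in \vM_G$ it is compactly supported with $\partial g$ a nonempty multicurve, and commuting with $f$ forces $\partial g$ to be compatible with the canonical decomposition induced by $\partial f$. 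The upshot should be that any $g \in C_{\vM_G}(f)$ decomposes (up to the relevant powers and subtleties) as a product $g = g_0 g_1$, where $g_0$ is supported on $\supp(f)$ and $g_1$ is supported on $\Sigma \setminus \supp(f)$.

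The key claim driving the decomposition is that $Z(C_{\vM_G}(f))$ consists exactly of the elements of $C_{\vM_G}(f)$ supported on $\supp(f)$. In the multitwist case $f = T_\sigma^j$, an element $g$ commuting with $f$ fixes $\sigma$ pointwise (Lemmas \ref{Rigidity1}, \ref{Rigidity2}), and $g$ is central in $C_{\vM_G}(f)$ precisely when it commutes with all twists $T_\delta^k$ for $\delta$ disjoint from $\sigma$ living in the complementary pieces — which, by the usual change-of-coordinates / intersection-number arguments as in the proof of Lemma \ref{lem.multitwist}, forces $g$ to be a power of $T_\sigma$, i.e.\ supported on $\sigma = \supp(f)$. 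Conversely powers of $T_\sigma$ are clearly central in $C_{\vM_G}(f)$. In the pseudo-Anosov case, an element $g \in C_{\vM_G}(f)$ supported on $\supp(f)$ must commute with every pseudo-Anosov piece $f|_Y$; since the centralizer of a pseudo-Anosov in the mapping class group of a finite-type surface is virtually cyclic and $g$ is pure, $g|_Y$ is a power of $f|_Y$ on each piece, and such elements commute with everything in $C_{\vM_G}(f)$ (which fixes each $Y$), hence are central. For the reverse inclusion one shows a central element cannot have any nontrivial component off $\supp(f)$: if it did, one exhibits an element of $\vM_G$ (a separating twist or bounding pair map, available since $\Sigma$ has infinite type) supported in a complementary region that fails to commute with it.

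With that claim in hand, the decomposition follows: given $g \in C_{\vM_G}(f)$, write $g = g_0 g_1$ with $g_0$ supported on $\supp(f)$ and $g_1$ on $\Sigma \setminus \supp(f)$ (using that $g$ fixes the components of $\Sigma \setminus \partial f$, and adjusting by the appropriate power of $T_\sigma$ in the twist case to absorb the $\supp(f)$-part); then $g_0 \in Z(C_{\vM_G}(f))$ by the claim, and one checks $g_1 \in (\vP_G)_f$, i.e.\ $g_1 \in \vM_G$, $g_1$ commutes with $f$, and $g_1$ is supported off $\supp(f)$ — the first of these requires verifying that $g_1$, being a ``complementary piece'' of an element of $\vM_G$ that commutes with $f$, still satisfies the three defining conditions of $\vM_G$ (countable conjugacy class is immediate since $g_1$ is compactly supported; the centralizer conditions follow because the relevant centralizers split along $\supp(f)$ versus its complement). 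Finally the sum is direct because $Z(C_{\vM_G}(f)) \cap (\vP_G)_f$ consists of elements simultaneously supported on $\supp(f)$ and on its complement, hence trivial, and the two subgroups commute elementwise since $(\vP_G)_f \subset C_{\vM_G}(f)$ and $Z(C_{\vM_G}(f))$ is central. I expect the main obstacle to be the bookkeeping in the pseudo-Anosov case: making precise the statement that an element of $\vM_G$ commuting with $f$ genuinely splits as a product of a piece on $\supp(f)$ and a piece on the complement, and then verifying that the complementary piece still lies in $\vM_G$ — in particular re-checking condition (3), $C_G(g_1) = C_G(g_1^k)$, which needs that passing to powers does not enlarge the centralizer, an argument that must be run using the structure of $\partial g_1$ rather than imported directly from $f$.
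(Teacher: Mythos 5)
The centerpiece of your argument --- that every element of $C_{\vM_G}(f)$ supported in $\supp(f)$ is central, proved by combining Lemma \ref{EssSupp} (everything in $\vF_G\cap C_G(f)$ preserves the pieces of $\Sigma\setminus\partial f$ and, via Lemmas \ref{Rigidity1} and \ref{Rigidity2}, fixes the curves of $\sigma$), the observation that in the multitwist case such an element is forced to be a power of $T_\sigma$, and, in the pseudo-Anosov case, the componentwise use of cyclic centralizers of pseudo-Anosovs on the finite-type pieces --- is exactly the paper's proof. One small correction there: you assert $g|_Y$ is a power of $f|_Y$; all you get (and all you need) is that $g|_Y$ and $h|_Y$ lie in the centralizer of the pseudo-Anosov $f|_Y$, which is infinite cyclic because $\Mod(Y)$ is torsion-free, hence abelian.

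Where your proposal goes beyond the paper is the attempt to establish the literal splitting: the converse inclusion and, above all, the factorization $g=g_0g_1$ with $g_0\in Z(C_{\vM_G}(f))$ and $g_1\in(\vP_G)_f$. The paper's proof never carries this out --- it stops at the centrality claim above --- and it is precisely here that your sketch has genuine unresolved gaps. Cutting along $\partial\supp(f)$ only produces factors in $\Mod(\Sigma)$: nothing in your argument puts $g_0$ and $g_1$ into the finite-index subgroup $G$, which is required even to speak of membership in $Z(C_{\vM_G}(f))$ or $(\vP_G)_f$, and this cannot be repaired by passing to powers since it is $g$ itself that must be factored. Moreover, when $\sigma=\emptyset$ the factorization is only well defined up to multitwists about the nonseparating curves of $\partial f$, and by Vautaw's theorem \cite{Vau} no nontrivial such multitwist lies in $\vI(\Sigma)$, so one must actually argue that some choice of $(g_0,g_1)$ lands in the Torelli group; you do not address this. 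Finally, you yourself flag, but do not supply, the verification that $g_1$ satisfies the defining conditions of $\vM_G$ (in particular condition (3), $C_G(g_1)=C_G(g_1^k)$). So: the portion of your argument that overlaps the paper is sound and essentially identical to it, but the extra bookkeeping you announce for the full direct-sum statement is not completed, and completing it is not routine.
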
 
  
  \begin{proof}

    Let $\sigma \subset \partial f$ be the set of all separating curves or
    bounding pairs. If $\sigma$ is nonempty, then by Lemma \ref{EssSupp},
    $\sigma=\partial f$ is a separating curve or a bounding pair, and $f =
    T_\sigma^j$, where $T_\sigma$ is a separating twist or a bounding pair
    map. In this case, any element $g \in C_{\vM_G}(f)$ fixes every curve
    of $\sigma$. Thus, if $g$ has support in $\supp(f)$, then $g$ is itself
    a power of $T_\sigma$. This shows $g \in Z(C_{\vM_G}(f))$.
    
    Now suppose $\sigma$ is empty. By Lemma \ref{EssSupp}, $\supp(f)$
    is non-empty. Let $g \in C_{\vM_G}(f)\subset \vF\cap C_G(f)$ be an
    element with support in $\supp(f)$. We want to show $g$ lies in the
    center of $C_{\vM_G}(f)$. If $h \in C_{\vM_G}(f)$ has support disjoint
    from $\supp(f)$, then $h$ and $g$ clearly commute. Henceforth, we may
    assume this is not the case. 

    By Lemma \ref{EssSupp}, $f$, $g$, and $h$ all preserve every component
    of $\supp(f)$. Thus, we can assume without a loss of
    generality that $h$ is also supported in $\supp(f)$. Let
    $Y_1,\ldots,Y_n$ be the components of $\supp(f)$. For each $i$, let
    $f_i$, (resp.\ $g_i$ and $h_i$) denote the restriction of $f$ (resp.
    $g$ and $h$) to $Y_i$. Since the components of $\partial f$ are
    non-separating and no two form a bounding pair, we can write 
    \[
      f = f_1 f_2 \cdots f_n \qquad 
      g = g_1 g_2 \cdots g_n, \qquad 
      h = h_1 h_2 \cdots h_n,
    \]
    where each $f_i$ is pseudo-Anosov on $Y_i$. Since $\Mod(Y_i)$ is
    torsion-free and $f\in\vM_G$, the centralizer of $f_i$ in $\Mod(Y_i)$
    is cyclic. As $f$ commutes with $g$ and $h$, each $f_i$ commutes with
    $g_i$ and $h_i$. Thus, each $g_i$ or $h_i$ is contained in the
    centralizer of $f_i$, yielding $g_i$ and $h_i$ commute for all $i$.
    This shows $g$ and $h$ commute as required. \qedhere 

  \end{proof}
  
  Finally, we can prove the characterization of Dehn twists and bounding
  pair maps.

  \begin{prop}\label{cor.algcar} 
    
    Let $G <\vI(\Sigma)$ be a finite-index subgroup, and let $f$ lie in
    $G$. Then $f$ is a power of a Dehn twist or of a bounding pair map if
    and only $f$ is in $\vM_G$, and for all $g$ in $\vM_G$ such that
    $(\vP_G)_g = (\vP_G)_f$ we have that there exist integers $i,j \neq 0$
    such that $f^i=g^j$. 
  
  \end{prop}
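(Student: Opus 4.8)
The forward direction is essentially contained in the earlier lemmas: by Lemma \ref{lem.multitwist}, if $f$ is a power of a separating twist $T_\gamma$ or of a bounding pair map $T_\sigma$, then $f\in\vM_G$; moreover $\supp(f)=\sigma$ is a single vertex of $\vT(\Sigma)$, so $(\vP_G)_f$ is the set of elements of $C_{\vM_G}(f)$ supported in the complement of that vertex. If $g\in\vM_G$ has $(\vP_G)_g=(\vP_G)_f$, I want to extract from this equality that $g$ has the same essential support as $f$, and then conclude via Lemma \ref{lem.sum} and the infinite-cyclicity in condition (2) defining $\vM_G$. Concretely, I plan to show that the essential support of an element $h\in\vM_G$ is determined by $(\vP_G)_h$: the vertices (or pseudo-Anosov components) in $\supp(h)$ are exactly the curves/subsurfaces ``hit'' by every element of $C_{\vM_G}(h)$ not lying in $(\vP_G)_h$, while $(\vP_G)_h$ itself records the complement. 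Since the set of finite-type subsurfaces of $\Sigma$ disjoint from a given one, together with the separating twists and bounding pair maps they carry (which all lie in $\vM_G$ by Lemma \ref{lem.multitwist}), is rich enough to ``see'' any curve or subsurface it intersects, the assignment $h\mapsto\supp(h)$ factors through $h\mapsto(\vP_G)_h$.

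\textbf{Key steps.}
\emph{Step 1.} Prove: if $f,g\in\vM_G$ and $(\vP_G)_f=(\vP_G)_g$, then $\supp(f)=\supp(g)$ (as multicurves, resp.\ as unions of domains). For this I would argue that $\supp(f)$ is reconstructible from $(\vP_G)_f$. Given a separating curve or bounding pair $v$ (or a finite-type subsurface $Y$) disjoint from $\supp(f)$, some power of $T_v$ (resp.\ a pseudo-Anosov on $Y$, or a pure braid as in the discussion after Lemma \ref{lem.multitwist}) lies in $(\vP_G)_f$; conversely if $v$ intersects $\supp(f)$ then no power of $T_v$ lies in $C_{\vM_G}(f)$ at all, hence not in $(\vP_G)_f$. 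This lets one recover the isotopy class of $\supp(f)$ from the supports of the elements of $(\vP_G)_f$, using the Alexander method for infinite-type surfaces \cite{HMVAlex} exactly as in the surjectivity argument of Theorem \ref{thm.complex}. Hence $\supp(f)=\supp(g)$.

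\emph{Step 2.} With $\supp(f)=\supp(g)$, apply Lemma \ref{lem.sum} to both $f$ and $g$: we have $C_{\vM_G}(f)=Z(C_{\vM_G}(f))\oplus(\vP_G)_f$ and likewise for $g$. Since $\supp(f)=\supp(g)$, an element is supported in $\Sigma\setminus\supp(f)$ iff it is supported in $\Sigma\setminus\supp(g)$, so $(\vP_G)_f=(\vP_G)_g$ is consistent, and more importantly $Z(C_{\vM_G}(f))$ and $Z(C_{\vM_G}(g))$ are determined by the same essential support. Now invoke condition (2) in the definition of $\vM_G$: $Z(\vF_G\cap C_G(f))$ is infinite cyclic, and $f$ lies in it; the same for $g$. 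Using Step 1 and Lemma \ref{EssSupp}, $f$ and $g$ both lie in the \emph{same} infinite cyclic group — either $\langle T_\sigma\rangle$ when $\sigma=\partial f=\partial g$ is a separating curve or bounding pair, or the cyclic group generated by a ``coordinated'' pseudo-Anosov on the common pseudo-Anosov locus. In an infinite cyclic group, any two nontrivial elements $f,g$ satisfy $f^i=g^j$ for suitable nonzero $i,j$ (take $i,j$ so that the exponents match). This gives the relation $f^i=g^j$.

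\emph{Step 3 (converse, completing the iff).} Suppose $f\in\vM_G$ satisfies the stated property. Apply it with $g=f$: trivially satisfied, no information. Instead, fix a separating curve or bounding pair $v$ realizing a vertex of $\supp(f)$ if $\partial f$ contains such a vertex — then by Lemma \ref{EssSupp}(3), $f=T_v^j$ and we are done. If $\partial f$ has no separating curve or bounding pair, then by Lemma \ref{EssSupp}(2) $\supp(f)$ is a nonempty union of finite-type subsurfaces on which $f$ is pseudo-Anosov; I must rule this out. Here I would choose $g\in\vM_G$ a power of a separating twist about a curve $w$ sitting \emph{inside} a pseudo-Anosov component $Y$ of $f$; since $w$ is filled by the pseudo-Anosov foliations, no power of $f$ fixes $w$, so $f^i\ne g^j$ for all nonzero $i,j$; but one checks $(\vP_G)_g=(\vP_G)_f$ is \emph{not} forced — so instead I compare $(\vP_G)$ of $f$ with that of a power of a twist and derive that $f$ cannot have a pseudo-Anosov component without violating the hypothesis. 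The cleanest route: if $f$ has a pseudo-Anosov component $Y$, pick $v$ a separating curve cutting off a small finite-type piece of $Y$; then $T_v\in\vM_G$, and comparing essential supports via Step 1 forces a contradiction with the hypothesis $f^i=g^j$. Thus $\partial f$ must contain a separating curve or bounding pair, and Lemma \ref{EssSupp}(3) finishes it.

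\textbf{Main obstacle.}
The crux is Step 1: showing that $(\vP_G)_f$ genuinely determines the essential support $\supp(f)$. The subtlety is that $\vM_G$ contains ``exotic'' elements (pseudo-Anosovs, pure braids on planar subsurfaces), so $(\vP_G)_f$ is not literally the stabilizer of a curve system, and one must argue that the \emph{supports} of elements of $(\vP_G)_f$ exhaust a neighborhood of every end of $\Sigma\setminus\supp(f)$ while avoiding $\supp(f)$ entirely — this is where the infinite-type Alexander method \cite{HMVAlex} and the abundance of separating twists (available because $\Sigma$ has infinite type, cf.\ Proposition \ref{prop.finite.support}) do the work. A secondary, more bookkeeping-heavy obstacle is the converse in Step 3: pinning down that a genuine pseudo-Anosov component is incompatible with the ``$f^i=g^j$ for all matching $g$'' hypothesis, which requires exhibiting an explicit $g\in\vM_G$ with $(\vP_G)_g=(\vP_G)_f$ but $g$ not a power of $f$ — e.g.\ a different pseudo-Anosov on the same locus, or a twist interacting nontrivially with the foliations.
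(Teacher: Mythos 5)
Your forward direction is broadly in the spirit of what is needed, but the heart of the proposition is the converse, and there your plan does not work as written. The witnesses you propose in Step 3 --- a power of a separating twist about a curve $w$ inside a pseudo-Anosov component $Y$ of $f$ --- are useless for deriving a contradiction, because such a $g$ has $\supp(g)=w\neq\supp(f)$ and hence (as your own Step 1 would show) $(\vP_G)_g\neq(\vP_G)_f$; the hypothesis ``for all $g$ with $(\vP_G)_g=(\vP_G)_f$ there are nonzero $i,j$ with $f^i=g^j$'' simply says nothing about it. You half-notice this and then propose essentially the same move again (``pick $v$ a separating curve cutting off a small piece of $Y$''), which fails for the same reason. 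The construction the paper uses is the one you only mention in passing in your ``main obstacle'' paragraph: take $h\in\Mod(\Sigma)$ supported in $Y$ and preserving the components of $\Sigma\setminus\partial f$, so that $f|_Y$ and $(hfh^{-1})|_Y$ are independent pseudo-Anosovs, and set $g=hfh^{-1}$ (passing to a power to land in $G$). Then $\supp(g)=\supp(f)$, and $(\vP_G)_g=(\vP_G)_f$ because any element supported in $\Sigma\setminus\supp(f)$ commutes with $h$, so commuting with $f$ is equivalent to commuting with $g$; yet $f$ and $g$ have no common power. You never carry out this construction, so the converse is essentially unproved in your proposal.

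Relatedly, the pseudo-Anosov branch of your Step 2 is false, and the counterexample is exactly the paper's construction above: two elements of $\vM_G$ with the same essential support (indeed the same $(\vP_G)$) need not lie in a common infinite cyclic group when that support is a union of pseudo-Anosov domains --- $Z(\vF_G\cap C_G(f))$ being infinite cyclic does not mean every $g$ with the same support sits inside it. (This does not sink your forward direction, since there $f$ is a twist or bounding pair power and only the $\langle T_\sigma\rangle$ branch occurs, via Lemma \ref{EssSupp}(3).) Finally, your Step 1 --- that $(\vP_G)_f$ determines $\supp(f)$ --- is a genuinely nontrivial claim that you leave as a sketch; note that the paper's argument never needs it: for the forward direction it leans on Lemma \ref{lem.multitwist} and the definition of $(\vP_G)_f$, and for the converse it only needs the easy implication that its specific conjugate $g=hfh^{-1}$ has the same $(\vP_G)$ as $f$, not that equal $(\vP_G)$ forces equal support. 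So as it stands the proposal has the right ingredients for the ``only if'' hypotheses but a genuine gap where the proposition's content lies.
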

  
  \begin{proof} 
    The forward direction is Lemma \ref{lem.multitwist} and the definition
    of $(\vP_G)_f$.
   
    For the other direction, we prove the contrapositive. Assume that $f$
    is not a power of a Dehn twist or bounding pair map. Our goal is to
    find an element $g \in \vM_G$ with $\supp(g) = \supp(f)$ such that no
    powers of $f$ and $g$ are equal, but $(\vP_G)_f=(\vP_G)_g$.
 
    By Lemma \ref{EssSupp}, there exists a domain $Y$ in $\Sigma
    \setminus \partial f$ on which $f|_Y$ is a pseudo-Anosov. Since $Y$
    supports a pseudo-Anosov, we may choose an element $h \in \Mod(\Sigma)$
    such that $h$ preserves the components of $\Sigma \setminus \partial
    f$, is identity outside of $Y$, and the restriction to $Y$ of $f$ and
    $g=hfh^{-1}$ are two independent pseudo-Anosovs. Since $\vI(\Sigma)$ is
    normal in $\Mod(\Sigma)$, $g \in \vI(\Sigma)$, and by construction,
    $\supp(g) = \supp(f)$. Thus $(\vP_G)_g = (\vP_G)_f$, but $f$ and $g$ do
    not have a common power. \qedhere
  \end{proof}

  \subsection{Abstract commensurators of the Torelli group} 

  We can now finally prove Theorem \ref{thm.commensurator}.  For a bounding
  pair $\gamma = \{ \gamma_1, \gamma_2 \}$ we use the shorthand $T_\gamma$
  for the bounding pair map $T_{\gamma_1}T_{\gamma_2}^{-1}$.

  \begin{proof}[Proof of Theorem \ref{thm.commensurator}]
    Let $[\psi]$ be an element of $\Comm \vI (\Sigma)$ representing the
    isomorphism of finite index subgroups
    \[
    \psi: G_1 \to G_2.
    \]
    Let $\gamma$ be a separating curve or a bounding pair and choose $n$ in
    $\mathbb{N}$ so that $T_\gamma^n$ is in $G_1$.  By Proposition
    \ref{cor.algcar}, $T_\gamma^n$ is in $\vM_{G_1}$ and for all $g$ in
    $\vM_{G_1}$ such that $(\vP_{G_1})_g=(\vP_{G_1})_{T_\gamma^n}$, there
    exist integers $i,j$ so  that $(T_\gamma^n)^i=g^j$.  Since these
    conditions are preserved by isomorphism, we have that
    $\psi(T_\gamma^n)$ lies in $\vM_{G_2}$,
    Proposition \ref{cor.algcar} implies there exists a separating curve or
    bounding pair $\delta$ and a nonzero integer $m$ such that
    $\psi(T_\gamma^n) = T_\delta^m.$

    At this point, and again with respect to the above notation, we obtain
    that $\psi$ induces a map 
    \begin{align*}
    \psi_* : \vT(\Sigma) &\to \vT(\Sigma);
    \\ \gamma &\mapsto \delta.
    \end{align*}
    We observe that $\psi_*$ is a simplicial map, since powers of Dehn
    twists and bounding pair maps commute if and only if the underlying
    curves are disjoint.  Moreover, the map is also bijective, with inverse
    the simplicial map associated to the inverse of $\psi^{-1}$.

    By Theorem \ref{thm.complex}, there exists an $f\in\Mod(\Sigma)$ such
    that $\psi_*(\gamma) = f(\gamma)$ for every separating curve or
    bounding pair $\gamma$. Now, for any $g$ in $G_1$ we have
    \[
    \psi (g T^n_\gamma g^{-1}) = \psi (g) \psi( T^n_\gamma) \psi( g^{-1}) =
    \psi(g) T^n_{f(\gamma)} \psi(g^{-1}) = T^n_{\psi(g)f(\gamma)},
    \]
    and therefore
    \[
    T^n_{\psi(g)f(\gamma)} = \psi(g T^n_\gamma g^{-1}) =
    \psi(T^n_{g(\gamma)}) = T^n_{fg(\gamma)}.
    \]
    Therefore $\psi(g)f(\gamma) = fg(\gamma)$.  By use of the Alexander
    method \cite{HMVAlex} we conclude that $\psi(g) = fgf^{-1}$.  This
    shows that every abstract commensurator of $\vI(\Sigma)$ is defined by
    conjugation by a mapping class of $\Sigma$, and in particular, so is
    every automorphism of $\vI(\Sigma)$. 

    On the other hand, suppose there exists an $f$ in $\Mod(\Sigma)$ and a
    finite-index subgroup $H < \vI(\Sigma)$ such that conjugation by $f$
    induces the identity map on $H$. For any separating curve or bounding
    pair $\gamma$, there exists some $m \ge 1$ such that $T_\gamma^m$ lies in
    $H$. Thus \[ T_\gamma^m = fT_\gamma^m f^{-1} = T_{f \gamma}^m. \] By
    \cite[Lemma 2.5]{BDR}, $f \gamma = \gamma$, and thus $f$ is the
    identity by Theorem \ref{thm.complex}. This completes the proof.
    \qedhere 
  \end{proof}

%section{References}

  \bibliographystyle{plain}
  %\addcontentsline{toc}{section}{Bibliography}
  \bibliography{BigTorelli}

\end{document}